\newcommand{\paperfont}{\fontsize{11pt}{1.2\baselineskip}\selectfont}
\begin{document}

\theoremstyle{definition}
\makeatletter
\thm@headfont{\bf}
\makeatother
\newtheorem{theorem}{Theorem}[section]
\newtheorem{definition}[theorem]{Definition}
\newtheorem{lemma}[theorem]{Lemma}
\newtheorem{proposition}[theorem]{Proposition}
\newtheorem{corollary}[theorem]{Corollary}
\newtheorem{remark}[theorem]{Remark}
\newtheorem{example}[theorem]{Example}
\newtheorem{assumption}[theorem]{Assumption}

\lhead{}
\rhead{}
\lfoot{}
\rfoot{}

\renewcommand{\refname}{References}
\renewcommand{\figurename}{Figure}
\renewcommand{\tablename}{Table}
\renewcommand{\proofname}{Proof}

\newcommand{\diag}{\mathrm{diag}}
\newcommand{\tr}{\mathrm{tr}}
\newcommand{\re}{\mathrm{Re}}
\newcommand{\one}{\mathbbm{1}}
\newcommand{\loc}{\textrm{loc}}

\newcommand{\Pnum}{\mathbb{P}}
\newcommand{\Enum}{\mathbb{E}}
\newcommand{\Rnum}{\mathbb{R}}
\newcommand{\Cnum}{\mathbb{C}}
\newcommand{\Znum}{\mathbb{Z}}
\newcommand{\Nnum}{\mathbb{N}}
\newcommand{\abs}[1]{\left\vert#1\right\vert}
\newcommand{\set}[1]{\left\{#1\right\}}
\newcommand{\norm}[1]{\left\Vert#1\right\Vert}
\newcommand{\innp}[1]{\langle {#1}\rangle}
\newcommand{\style}{\setlength{\itemsep}{1pt}\setlength{\parsep}{1pt}\setlength{\parskip}{1pt}}

\title{\textbf{Limit theorems for generalized density-dependent Markov chains and bursty stochastic gene regulatory networks}}
\author{Xian Chen$^1$,\;\;\;Chen Jia$^{2,3,*}$ \\
\footnotesize $^1$ School of Mathematical Sciences, Xiamen University, Xiamen, 361005, China.\\
\footnotesize $^2$ Beijing Computational Science Research Center, Beijing 100193, China. \\
\footnotesize $^3$ Department of Mathematics, Wayne State University, Michigan, Detroit 48202, U.S.A.\\
\footnotesize $^*$ Correspondence: chenjia@wayne.edu}
\date{}                              
\maketitle                           
\thispagestyle{empty}                

\paperfont

\begin{abstract}
Stochastic gene regulatory networks with bursting dynamics can be modeled mesocopically as a generalized density-dependent Markov chain (GDDMC) or macroscopically as a piecewise-deterministic Markov process (PDMP). Here we prove a limit theorem showing that each family of GDDMCs will converge to a PDMP as the system size tends to infinity. Moreover, under a simple dissipative condition, we prove the existence and uniqueness of the stationary distribution and the exponential ergodicity for the PDMP limit via the coupling method. Further extensions and applications to single-cell stochastic gene expression kinetics and bursty stochastic gene regulatory networks are also discussed and the convergence of the stationary distribution of the GDDMC model to that of the PDMP model is also proved.\\

\noindent 
\textbf{AMS Subject Classifications}: 60J25, 60J27, 60J28, 60G44, 92C40, 92C45, 92B05\\
\textbf{Keywords}: stochastic gene expression, random burst, martingale problem, piecewise-deterministic Markov process, L\'{e}vy-type operator
\end{abstract}

\section{Introduction}
Density-dependent Markov chains (DDMCs) have been widely applied to model various stochastic systems in chemistry, ecology, and epidemics \cite{ethier2009markov, anderson2015stochastic}. In particular, they serve as a fundamental dynamic model for stochastic chemical reactions. If a chemical reaction system is well mixed and the numbers of molecules are very large, random fluctuations can be ignored and the evolution of the concentrations of all chemical species can be modeled macroscopically as a set of deterministic ordinary differential equations (ODEs) based on the law of mass action, dating back to the 18th century. If the numbers of participating molecules are not large, however, random fluctuations can no longer be ignored and the evolution of the system is usually modeled mesocopically as a DDMC. The Kolmogorov backward equation of the DDMC model turns out to be the famous chemical master equation, which is first introduced by Leontovich \cite{leontovich1935basic} and Delbr\"{u}ck \cite{delbruck1940statistical}. At the center of the mesoscopic theory of chemical reaction kinetics is a limit theorem proved by Kurtz in the 1970s \cite{kurtz1971limit, kurtz1972relationship, kurtz1976limit, kurtz1978strong}, which states that when the volume of the reaction vessel tends to infinity, the trajectory of the mesoscopic DDMC model will converge to that of the macroscopic ODE model (in probbaility \cite{kurtz1972relationship} or almost surely \cite{kurtz1978strong}) on any finite time interval, whenever the initial value converges. This limit theorem interlinks the deterministic and stochastic descriptions of chemical reaction systems and establishes a rigorous mathematical foundation for the nowadays widely used DDMC models.

The situation becomes more complicated when it comes to the stochastic biochemical reaction kinetics underlying single-cell gene expression and, more generally, gene regulatory networks. One reason of complexity is that biochemical reactions involved in gene expression usually possess multiple different time scales, spanning many orders of magnitude \cite{moran2013sizing}. Another source of complexity is the small copy numbers of participating molecules: there is usually only one copy of DNA on which a gene is located, mRNAs can be equally rare, and most proteins are present in less than 100 copies per bacterial cell \cite{paulsson2005models}. Over the past two decades, numerous single-cell experiments \cite{cai2006stochastic, suter2011mammalian} have shown that the synthesis of many mRNAs and proteins in an individual cell may occur in random bursts | short periods of high expression intensity followed by long periods of low expression intensity. To describe the experimentally observed bursting kinetics, some authors \cite{friedman2006linking, pajaro2015shaping, jkedrak2016time, bressloff2017stochastic, jia2017emergent, jia2019macroscopic} have modeled gene expression kinetics as a piecewise deterministic Markov process (PDMP) with discontinuous trajectories, where the jumps in the trajectories correspond to random transcriptional or translational bursts. On the other hand, some authors \cite{paulsson2000random, mackey2013dynamic, kumar2014exact, jia2017simplification, jia2017stochastic} have used the mesoscopic model of generalized density-dependent Markov chains (GDDMCs) to describe the molecular mechanism underlying stochastic gene expression. This raises the important question of whether the macroscopic PDMP model can be viewed as the limit of the mesoscopic GDDMC model.

In this paper, we introduce a family of stochastic processes called GDDMCs, which generalize the classical DDMCs and have important biological significance. Furthermore, we prove a functional limit theorem for GDDMCs using the theory of martingale problems. In particular, we show that the limit process of each family of GDDMCs is a PDMP with a L\'{e}vy-type generator. This limit theorem, in analogy to the pioneering work of Kurtz, interlinks the macroscopic and mesoscopic descriptions of stochastic gene regulatory network with bursting dynamics and establishes a rigorous mathematical foundation for the empirical PDMP models.

Another important biological problem is to study the stationary distribution for stochastic gene expression. In this simplest case that the gene of interest is unregulated, the stationary distributions of the mesoscopic GDDMC model and the macroscopic PDMP model turn out to be a negative binomial distribution \cite{shahrezaei2008analytical} and a gamma distribution \cite{friedman2006linking}, respectively. Both the two distributions fit single-cell data reasonably well \cite{cai2006stochastic}. Therefore, it is natural to ask when the stationary distribution exists and is unique for the two models and whether the stationary distribution of the GDDMC model will converge to that of the PDMP model. In this paper, we prove the existence and uniqueness of the stationary distribution for the two models under a simple dissipative condition. Under the same condition, we also prove the convergence of the stationary distribution of the GDDMC model to that of the PDMP model.

From the mathematical aspect, another interesting question to study is whether the limit process is ergodic, that is, whether the time-dependent distribution of the PDMP limit will converge to its stationary distribution. In previous studies \cite{mackey2008dynamics, mackey2013dynamic, mackey2016simple}, Mackey et al. have shown that if the stationary distribution exists, then the PDMP model is ergodic in some sense. In this paper, using the coupling method, we reinforce this result by showing that the PDMP limit is actually exponentially ergodic under a simple dissipative condition, that is, the time-dependent distribution will converge to the stationary distribution at an exponential speed.

As another biological application, we propose a mesoscopic GDDMC model of bursty stochastic gene regulatory networks with multiple genes, complex burst-size distributions, and complex network topology. Then our abstract limit theorem is applied to investigate the macroscopic PDMP limit of the mesoscopic GDDMC model.

The structure of this article is organized as follows. In Section 2, we give the rigorous definition of a family of GDDMCs and construct the trajectories of its PDMP limit. In Section 3, we state four main theorems. In Section 4, we apply our abstract theorems to the specific biological problem of single-cell stochastic gene expression and obtain some further mathematical results. In Section 5, we apply the limit theorem to study the macroscopic limit of a complex stochastic regulatory network with bursting dynamics. The remaining sections are devoted to the detailed proofs of the main theorems.

\section{Model}\label{secmodel}
In recent years, there has been a growing attention to gene regulatory networks and biochemical reaction networks modeled by a GDDMC, which generalizes the classical DDMC \cite{kurtz1971limit, kurtz1972relationship, kurtz1976limit, kurtz1978strong}. In this paper, we consider a family of continuous-time Markov chains $X_V = \set{X_V(t):t\geq 0}$ on the $d$-dimensional lattice
\begin{equation*}
E_V = \set{\tfrac{n}{V}:n=(n_1,n_2,\ldots,n_d)\in\Nnum^d}
\end{equation*}
with transition rate matrix $Q_V = (q_V(x,y))$, where $\Nnum$ is the set of nonnegative integers and $V>0$ is a scaling parameter. Such Markov chains have been widely applied to model the evolution of the concentrations of multiple chemical species undergoing stochastic chemical reactions \cite{anderson2015stochastic}. Specifically, for each $1\leq i\leq d$, $n_i$ stands for the copy number of the $i$th chemical species and $V$ usually stands for the size of the system \cite{kurtz1972relationship}. Then $n_i/V$ represents the concentration of the $i$th chemical species.

The transition rates of this Markov chain consist of two parts:
\begin{equation*}
q_V(x,y) = \hat{q}_V(x,y)+\tilde{q}_V(x,y),\;\;\;x,y\in E_V,x\neq y,
\end{equation*}
where $\hat{q}_V(x,y)$ is called the \emph{reaction part} and $\tilde{q}_V(x,y)$ is called the \emph{bursting part}. The functional forms of the two parts are described as follows. For each $m\in \Znum^d-\{0\}$, we assume that there exists a locally bounded function $\beta_m:\Rnum_+^d\rightarrow\Rnum_+$ such that
\begin{equation}\label{reaction}
\hat{q}_V\left(\tfrac{n}{V},\tfrac{n+m}{V}\right) = V\beta_m\left(\tfrac{n}{V}\right),\;\;\;n\in \Nnum^d,n+m\in \Nnum^d,
\end{equation}
where $\Znum$ is the set of integers and $\Rnum_{+}$ is the set of nonnegative real numbers. Throughout this paper, we assume that
\begin{equation*}
\sum_{m\neq 0}|m|\beta_m(x)<\infty,\;\;\;\textrm{for any}\;x\in\Rnum_+^d.
\end{equation*}
In fact, the condition \eqref{reaction} can be relaxed slightly as
\begin{equation}\label{reactiongeneral}
\lim_{V\rightarrow\infty}\sup_{n\leq kV}\Big|\tfrac{1}{V}\hat{q}_V\left(\tfrac{n}{V},\tfrac{n+m}{V}\right)
-\beta_m\left(\tfrac{n}{V}\right)\Big| = 0,\;\;\textrm{for any}\;k>0.
\end{equation}
Moreover, we assume that there exists a positive integer $N$ such that
\begin{equation}\label{bursting}
\tilde{q}_V\left(\tfrac{n}{V},\tfrac{n+m}{V}\right)
= \sum_{i=1}^Nc_i\left(\tfrac{n}{V}\right)\mu_i\left[\tfrac{m}{V},\tfrac{m+1}{V}\right),
\;\;\;n\in \Nnum^d, m\in\Nnum^d-\{0\},
\end{equation}
where for each $1\leq i\leq N$, $c_i:\Rnum_+^d\rightarrow\Rnum_+$ is a Lipschitz function with Lipschitz constant $L_{c_i}>0$, $\mu_i$ is a Borel probability measure on $\Rnum_{+}^d-\{0\}$ with finite mean, and
\begin{equation*}
\left[\tfrac{m}{V},\tfrac{m+1}{V}\right) \triangleq
\left[\tfrac{m_1}{V},\tfrac{m_1+1}{V}\right)\times\ldots\times\left[\tfrac{m_d}{V},\tfrac{m_d+1}{V}\right).
\end{equation*}
Similarly, the condition \eqref{bursting} can be relaxed slightly as
\begin{equation*}
\tilde{q}_V\left(\tfrac{n}{V},\tfrac{n+m}{V}\right) = \sum_{i=1}^Nc_i\left(\tfrac{n}{V}\right)p_i(V,m),\;\;\;
n\in\Nnum^d, m\in\Nnum^d-\{0\},
\end{equation*}
where $p_i(V,m)$ satisfies the following three conditions for each $1\leq i\leq N$:
\begin{equation}\label{conditions}
\begin{split}
&\textrm{(a)}\; \sum_{m\in\Nnum^d-\{0\}}|m|p_i(V,m)<\infty,\;\;\;\textrm{for any}\;V>0,\\
&\textrm{(b)}\; \lim_{V\rightarrow\infty}\sum_{m\in\Nnum^d-\{0\}}p_i(V,m) = 1,\\
&\textrm{(c)}\; \lim_{V\rightarrow\infty}V^d\sup_{|m|\leq kV}
\left|p_i(V,m)-\mu_i\left[\tfrac{m}{V},\tfrac{m+1}{V}\right)\right| = 0,\;\; \textrm{for any}\;k>0.
\end{split}
\end{equation}
We shall refer to $X_V$ as a $d$-dimensional GDDMC. If the transition rates only contain the reaction part, then $X_V$ reduces to the classical DDMC \cite{ethier2009markov, anderson2015stochastic}.

\begin{remark}\label{relax}
In fact, the condition (c) in \eqref{conditions} can be further relaxed. If the term $p_i(V,m)$ is concentrated on a $\tilde{d}$-dimensional hyperplane $H$ with $\tilde{d}<d$, that is,
\begin{equation*}
p_i(V,m) = 0,\;\;\;\textrm{whenever}\;m\notin H,
\end{equation*}
then $\mu_i$ is a probability measure concentrated on $H$ and the condition (c) in \eqref{conditions} can be relaxed  with $d$ replaced by $\tilde{d}$.
\end{remark}

\begin{remark}
If we use a GDDMC to model the expression levels of a family of proteins in a stochastic gene regulatory network, then the positive integer $N$ is usually chosen as the number of genes in the network. Moreover, the function $c_i$ describes the transcription rate of the $i$th gene and the probability measure $\mu_i$ or $p_i(V,\cdot)$ represents the burst-size distribution of the $i$th protein. These biological concepts will be explained in more detail in Sections 4 and 5.
\end{remark}

\begin{remark}
Suppose that a chemical reaction system contains the reaction
\begin{equation*}
a_1S_1+a_2S_2+\cdots+a_dS_d \rightarrow b_1S_1+b_2S_2+\cdots+b_dS_d,
\end{equation*}
where $S_1,S_2,\cdots,S_d$ are all chemical species involved in the chemical reaction system and $a_i$ and $b_i$ are nonnegative integers for each $1\leq i\leq d$. In this case, the GDDMC model of the chemical reaction system has a transition from $n/V$ to $(n+m)/V$ with $m = (b_1-a_1,b_2-a_2,\cdots,b_d-a_d)$. The corresponding transition rate from $n/V$ to $(n+m)/V$ has the form of
\begin{equation}\label{combinatorial}
\hat{q}_V\left(\tfrac{n}{V},\tfrac{n+m}{V}\right)
= \frac{k}{V^{a_1+\cdots+a_d-1}}C_{n_1}^{a_1}\cdots C_{n_d}^{a_d},
\end{equation}
where $k$ is the rate constant of the reaction. Moreover, it is easy to check that the condition \eqref{reactiongeneral} holds with $\beta_m$ being the polynomial
\begin{equation*}
\beta_m(x) = \frac{k}{a_1!\cdots a_d!}x_1^{a_1}\cdots x_d^{a_d}.
\end{equation*}
A DDMC model of a chemical reaction system with transition rates having the mass action kinetics \eqref{combinatorial} is often referred to as a Delbruck-Gillespie process \cite{jia2017emergent}.
\end{remark}

Our major aim is to study the limit behavior of $X_V$ as the scaling parameter $V\rightarrow\infty$. In fact, the limit process of $X_V$ turns out to be a PDMP with discontinuous trajectories, which can be constructed as follows. Let $F:\Rnum_+^d\rightarrow\Rnum^d$ be a vector field defined by
\begin{equation*}
F(x) = \sum_{m\neq 0}m\beta_m(x).
\end{equation*}
We assume that $F$ is a Lipschitz function with Lipschitz constant $L_F>0$. For the Markov chain $X_V$, since the transitions from the first orthant $\Rnum_+^d$ to other orthants $\Rnum^d-\Rnum_+^d$ are forbidden, it is easy to see that $\beta_m(x) = 0$ if $x_i = 0$ and $m_i<0$ for some $1\leq i\leq d$.
Therefore, for any $x\in\Rnum_+^d$ with $x_i = 0$, we have
\begin{equation*}
F_i(x) = \sum_{m\neq 0}m_i\beta_m(x) \geq 0.
\end{equation*}
This shows that on the boundary of the first orthant, the vector field $F$ points towards the interior of the first orthant. Thus, the ordinary differential equation $\dot x = F(x)$ has a global flow $\phi: \Rnum_+\times \Rnum_+^d\rightarrow \Rnum_+^d$ satisfying
\begin{equation}\label{flow}
\frac{d}{dt}\phi(t,x) = F(\phi(t,x)),\;\;\;\phi(0,x) = x.
\end{equation}
The limit process $X = \{X(t):t\geq 0\}$ can be constructed as follows. Set
\begin{equation*}
c(x) = \sum_{i=1}^Nc_i(x),\;\;\;\tilde{c}_i(x) = \frac{c_i(x)}{c(x)},\;\;\;1\leq i\leq N,
\end{equation*}
where we define $0/0 = 1$. Suppose that $X_0 = x\in \Rnum_+^d$. First, we selection a jump time $T_1$ with survival function
\begin{equation*}
\Pnum(T_1>t) = e^{-\int_0^tc(\phi(s,x))ds}.
\end{equation*}
Next, we select a random vector $Z_1$ with distribution
\begin{equation*}
\Pnum(Z_1\in\cdot|T_1) = \sum_{i=1}^N\tilde{c}_i(\phi(T_1,x))\mu_i(\cdot).
\end{equation*}
Then the trajectory of $X$ before $T_1$ is constructed by
\begin{equation*}
X(t) = \begin{cases}
\phi(t,x), &0\leq t<T_1,\\
\phi(T_1,x)+Z_1, &t=T_1.
\end{cases}
\end{equation*}
Repeating this procedure, for some integer $n\geq 1$, suppose that the trajectory of $X$ before the jump time $T_n$ has been constructed. Then we independently select the next inter-jump time $T_{n+1}-T_n$ with survival function
\begin{equation*}
\Pnum(T_{n+1}-T_n>t|X(T_n)) = e^{-\int_0^t c(\phi(s,X(T_n))ds}.
\end{equation*}
Next, we independently select a random vector $Z_{n+1}$ with distribution
\begin{equation}\label{jumpvector}
\Pnum(Z_{n+1}\in\cdot|X(T_n),T_n,T_{n+1}) = \sum_{i=1}^N\tilde{c}_i(\phi(T_{n+1}-T_n,X(T_n)))\mu_i(\cdot).
\end{equation}
Then the trajectory of $X$ between $T_n$ and $T_{n+1}$ is constructed by
\begin{equation}\label{induction}
X(t) = \begin{cases}
\phi(t-T_n,X(T_n)), &T_n\leq t<T_{n+1},\\
\phi(T_{n+1}-T_n,X(T_n))+Z_{n+1}, &t=T_{n+1}.
\end{cases}
\end{equation}
Moreover, we assume that $X$ enters the tomb state $\Delta = \infty$ after the explosion time
\begin{equation*}
T_{\infty}=\lim_{n\rightarrow \infty}T_n.
\end{equation*}
In this way, we obtain a Markov process $X$, which is widely known as a PDMP \cite{davis1984piecewise}.

\section{Results}\label{secresult}
Before stating our results, we introduce some notation. Let $S$ be a metric space and let $\mathcal{P}(S)$ denote the set of Borel probability measures on $S$. In this paper, the following five function spaces will be frequently used. Let $B(S)$ denote the space of bounded Borel measurable functions on $S$. Let $C_b(S)$ denote the space of bounded continuous functions on $S$. Let $C_c(S)$ denote the space of continuous functions on $S$ with compact supports. Let $C_0(S)$ denote the space of continuous functions on $S$ vanishing at infinity. Let $D(\Rnum_+,S)$ denote the space of c\`{a}dl\`{a}g functions $f:\Rnum_+\rightarrow S$ endowed with the Skorohod topology.

We next recall an important definition \cite[Section 4.2]{ethier2009markov}.

\begin{definition}\label{martingale}
Let $S$ be a metric space and let $\mathcal{R}$ be a linear operator on $B(S)$ with domain $\mathcal{D}(\mathcal{R})$. Let $Y = \set{Y(t):t\geq 0}$ be a stochastic process with sample paths in $D(\Rnum_+,S)$. We say that $Y$ is a solution to the \emph{martingale problem} for $\mathcal{R}$ if for any $f\in \mathcal{D}(\mathcal{R})$,
\begin{equation*}
f(Y(t))-f(Y(0))-\int_0^t\mathcal{R}f(Y(s))ds
\end{equation*}
is a martingale with respect to the natural filtration generated by $Y$. For any $\nu\in\mathcal{P}(S)$, we say that $Y$ is a solution to the martingale problem for $(\mathcal{R},\nu)$ if $Y$ is a solution to martingale problem for $\mathcal{R}$ and $Y$ has the initial distribution $\nu$. The solution to the martingale problem for $(\mathcal{R},\nu)$ is said to be \emph{unique} if any two solutions have the same finite-dimensional distributions. The martingale problem for $(\mathcal{R},\nu)$ is said to be \emph{well posed} if its solution exists and is unique.
\end{definition}

For any $V>0$, let $\mathcal{A}_V$ be a linear operator on $B(E_V)$ with domain $\mathcal{D}(\mathcal{A}_V) = C_c(E_V)$ defined by
\begin{equation*}
\begin{split}
\mathcal{A}_Vf\left(\tfrac{n}{V}\right)
=&\; \sum_{m\neq 0}V\beta_m\left(\tfrac{n}{V}\right)
\left[f\left(\tfrac{n+m}{V}\right)-f\left(\tfrac{n}{V}\right)\right] \\
&\; +\sum_{i=1}^Nc_i\left(\tfrac{n}{V}\right)\sum_{m\in\Nnum^d}p_i(V,m)
\left[f\left(\tfrac{n+m}{V}\right)-f\left(\tfrac{n}{V}\right)\right].
\end{split}
\end{equation*}
In the special case of $N = 1$, the above operator reduces to
\begin{equation}\label{eqdisoperator}
\begin{split}
\mathcal{A}_Vf\left(\tfrac{n}{V}\right)
=&\; \sum_{m\neq 0}V\beta_m\left(\tfrac{n}{V}\right)
\left[f\left(\tfrac{n+m}{V}\right)-f\left(\tfrac{n}{V}\right)\right] \\
&\; +c\left(\tfrac{n}{V}\right)\sum_{m\in\Nnum^d}p(V,m)
\left[f\left(\tfrac{n+m}{V}\right)-f\left(\tfrac{n}{V}\right)\right].
\end{split}
\end{equation}
The following theorem characterizes $X_V$ from the perspective of martingale problems.

\begin{theorem}\label{MC}
Let $\nu_V$ be the initial distribution of $X_V$. Then $X_V$ is the unique solution to the martingale problem for $(\mathcal{A}_V,\nu_V)$ with sample paths in $D(\Rnum_+,E_V)$. In particular, $X_V$ is nonexplosive.
\end{theorem}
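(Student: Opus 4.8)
The plan is to identify $\mathcal{A}_V$ as the formal generator of the minimal continuous-time Markov chain associated with the $Q$-matrix $Q_V$, and then to reduce the theorem to a non-explosion statement. First I would record that $Q_V$ is \emph{stable} and \emph{conservative}: the total exit rate $q_V(x) = \sum_{y\ne x}q_V(x,y) = V\sum_{m\ne 0}\beta_m(x)+\sum_{i=1}^N c_i(x)\sum_{m}p_i(V,m)$ is finite for every $x\in E_V$, thanks to the summability hypotheses $\sum_{m\ne 0}\abs{m}\beta_m(x)<\infty$ and condition (a) in \eqref{conditions}. I would then construct $X_V$ as the minimal jump chain (embedded jump chain together with exponential holding times) and check, via Dynkin's formula applied between successive jumps, that for every $f\in C_c(E_V)$ the process $M_f(t)=f(X_V(t))-f(X_V(0))-\int_0^t\mathcal{A}_V f(X_V(s))\,ds$ is a local martingale up to the explosion time $T_\infty$. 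Because $f$ has compact support and the rates are locally bounded, $\mathcal{A}_V f$ is bounded; hence, once non-explosion is established, $M_f$ is a genuine martingale and $X_V$ solves the martingale problem for $(\mathcal{A}_V,\nu_V)$.

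The heart of the argument is non-explosion, which I expect to be the main obstacle because both the burst sizes $m$ and the rates grow without bound. I would use the norm-like Lyapunov function $g(x)=1+\sum_{j=1}^d x_j$, whose sublevel sets in $E_V$ are finite. A direct computation gives, for the reaction part, $\sum_{m\ne0}V\beta_m(x)[g(x+\tfrac{m}{V})-g(x)]=\sum_j F_j(x)$, which is bounded by $C_1 g(x)$ since $F$ is Lipschitz; and for the bursting part, $\sum_{i}c_i(x)\sum_m p_i(V,m)[g(x+\tfrac m V)-g(x)]=\sum_i c_i(x)\kappa_i(V)$ with $\kappa_i(V)=\tfrac1V\sum_m p_i(V,m)\sum_j m_j<\infty$ by condition (a), which is bounded by $C_2(V)g(x)$ since each $c_i$ is Lipschitz hence of linear growth. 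Consequently $\mathcal{A}_V g\le C(V)\,g$ for a finite constant $C(V)$. (Here $\mathcal{A}_V g$ is well defined even though $g\notin C_c(E_V)$, because the defining series converge absolutely.) Introducing the exit times $\tau_R=\inf\set{t:g(X_V(t))\ge R}$ and applying the optional stopping theorem, $e^{-C(V)t}g(X_V(t\wedge\tau_R))$ is a supermartingale, so $\Enum[g(X_V(t\wedge\tau_R))]\le e^{C(V)t}\Enum[g(X_V(0))]$; letting $R\to\infty$ and using that $g\to\infty$ along any explosive sequence forces $\Pnum(T_\infty\le t)=0$ for every $t$, that is, $T_\infty=\infty$ almost surely.

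For uniqueness I would exploit that $E_V$ is a locally finite, discrete metric space, so each indicator $\one_{\set{x}}$ lies in $C_c(E_V)$ and every c\`{a}dl\`{a}g path in $E_V$ is pure-jump. Given any solution $Y$ of the martingale problem for $\mathcal{A}_V$ with sample paths in $D(\Rnum_+,E_V)$, testing against $f=\one_{\set{x}}$, for which $\mathcal{A}_V\one_{\set{x}}(y)=q_V(y,x)$ when $y\ne x$ and $\mathcal{A}_V\one_{\set{x}}(x)=-q_V(x)$, shows that $Y$ has exactly the jump rates $q_V$; thus $Y$ is a Markov jump chain with $Q$-matrix $Q_V$. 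Since $Y$ has paths in $E_V$ it is itself non-explosive, and a conservative stable $Q$-matrix admits a unique non-explosive chain, so $Y$ and $X_V$ have the same finite-dimensional distributions. This is precisely the standard equivalence that, for such a $Q$-matrix, well-posedness of the martingale problem on $C_c(E_V)$ holds if and only if the minimal process is non-explosive, which was established in the previous step. Combining the three steps yields that $X_V$ is the unique solution to the martingale problem for $(\mathcal{A}_V,\nu_V)$ and, in particular, is non-explosive.
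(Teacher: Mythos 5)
Your proposal is correct, and its crux coincides with the paper's: the paper proves non-explosion with exactly your Lyapunov function, $f\left(\tfrac{n}{V}\right)=1+\sum_{i}\tfrac{n_i}{V}$, and exactly your estimate $\mathcal{A}_Vf\leq C(V)f$, obtained from the Lipschitz property of $F$ and the $c_i$ together with condition (a). The differences are in how the surrounding steps are discharged, and the comparison is worth recording. For non-explosion, the paper stops after the Lyapunov bound and cites \cite[Theorem 2.25]{chen2004markov}, whereas you inline the proof of that criterion (exit times $\tau_R$ of sublevel sets, optional stopping, Markov's inequality, $\tau_R\uparrow T_\infty$); this is essentially the proof of the cited theorem, so nothing is lost, though you should run the argument under $\Pnum_x$ for each fixed starting state $x$ (since $\Enum\,g(X_V(0))$ may be infinite under a general $\nu_V$) and the supermartingale is $e^{-C(V)(t\wedge\tau_R)}g(X_V(t\wedge\tau_R))$ rather than $e^{-C(V)t}g(X_V(t\wedge\tau_R))$. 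For uniqueness, the paper localizes via stopped martingale problems (Ethier--Kurtz, Theorems 4.6.1--4.6.2, as in its Lemma \ref{lemBn}), while you appeal to classical $Q$-process theory: indicators pin down the jump rates, and a regular (stable, conservative, non-explosive) $Q$-matrix admits a unique chain. The one soft spot is the inference ``$Y$ has jump rates $q_V$, hence $Y$ is a Markov jump chain with $Q$-matrix $Q_V$'': a solution of a martingale problem is not a priori Markov, so this is precisely the point that requires proof; the rigorous patch is either Jacod's uniqueness theorem for point processes with a prescribed compensator, or exactly the stopped-martingale-problem localization the paper uses (rates are bounded on finite sets, so the stopped problems are well posed, any solution agrees with the minimal chain up to $\tau_R$, and non-explosion lets $R\to\infty$). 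Since your closing sentence invokes this standard equivalence, your proof is complete at the same level of rigor as the paper's own sketch, but you should be aware that the equivalence \emph{is} the localization argument in disguise and does not follow from the indicator computation alone.
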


\begin{proof}
The proof of this theorem will be given at the end of Section 6.
\end{proof}

Furthermore, let $\mathcal{A}$ be a L\'{e}vy-type operator on $B(\Rnum_+^d)$ with domain $\mathcal{D}(\mathcal{A}) = C_c^1(\Rnum_+^d)$ defined by
\begin{equation*}
\mathcal{A}f(x)
= \sum_{i=1}^dF_i(x)\partial_if(x)+\sum_{i=1}^Nc_i(x)\int_{\Rnum_+^d}[f(x+y)-f(x)]\mu_i(dy).
\end{equation*}
In the special case of $N = 1$, the above operator reduces to
\begin{equation}\label{eqoperator}
\mathcal{A}f(x)
= \sum_{i=1}^dF_i(x)\partial_if(x)+c(x)\int_{\Rnum_+^d}[f(x+y)-f(x)]\mu(dy).
\end{equation}
This L\'{e}vy-type operator is degenerate in the sense that it has no diffusion term. If fact, the existence and uniqueness of the martingale problem for a non-degenerate L\'{e}vy-type operator with a bounded $c(x)$ has been proved by Stroock \cite{stroock1975diffusion}. However, this result cannot be applied to a degenerate L\'{e}vy-type operator with an unbounded $c(x)$.

Let $N_t = \sup\{n\geq 1:T_n\leq t\}$ be the number of jumps of $X$ by time $t$. In fact, the classical theory of PDMPs relies on the basic assumption that $\Enum N_t<\infty$ for any $t\geq 0$, which guarantees $X$ to be nonexplosive. Under this assumption, Davis \cite{davis1984piecewise} has used the theory of multivariate point processes to find the extended generator of $X$. However, this assumption may not be true under our current framework. The following theorem characterizes $X$ from the perspective of martingale problems and provides a simple criterion for the nonexplosiveness of $X$.

\begin{theorem}\label{thmwellposed}
Let $\nu$ be the initial distribution of $X$. Then $X$ is the unique solution to the martingale problem for $(\mathcal{A},\nu)$ with sample paths in $D(\Rnum_+,\Rnum_+^d)$. In particular, $X$ is nonexplosive.
\end{theorem}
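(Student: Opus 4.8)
The plan is to establish three things in turn: that the explicitly constructed PDMP $X$ solves the martingale problem for $\mathcal{A}$ (existence), that it is nonexplosive, and that any solution is forced to coincide with $X$ in finite-dimensional distribution (uniqueness). A useful preliminary observation is that $\mathcal{A}f \in C_c(\Rnum_+^d)$ for every $f \in \mathcal{D}(\mathcal{A}) = C_c^1(\Rnum_+^d)$: if $\mathrm{supp}\,f \subset \{|x| \le R\}$ then, because the jumps take values in $\Rnum_+^d$ and hence $|x+y| \ge |x|$ for $x,y \ge 0$, both $f(x)$ and $f(x+y)$ vanish once $|x| > R$, so the apparently unbounded factor $c_i(x)$ multiplies an integrand that is identically zero there. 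Thus $\mathcal{A}f$ is bounded, which is exactly what makes the compensator term in the martingale problem integrable. I would also record at the outset the linear growth bounds $|F(x)| \le C(1+|x|)$ and $c(x) = \sum_i c_i(x) \le C(1+|x|)$ coming from the Lipschitz hypotheses on $F$ and the $c_i$, together with $\max_i \int_{\Rnum_+^d} |y|\,\mu_i(dy) < \infty$.

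Nonexplosiveness is where I would begin the real work, since the usual PDMP assumption $\Enum N_t < \infty$ is not granted here. Working directly with the explicit construction \eqref{flow}--\eqref{induction} (the Lyapunov function $u(x)=1+|x|$ is not in $\mathcal{D}(\mathcal{A})$, so I avoid the martingale problem at this stage), I would decompose $u(X(\cdot))$ into its flow part, controlled by $|F| \le Cu$, and its jump part $\sum_{T_k \le t}|Z_k|$, whose compensator is $\int_0^{\cdot}\sum_i c_i(X(s))\int|y|\mu_i(dy)\,ds \le C'\int_0^{\cdot} u(X(s))\,ds$. After stopping at $T_n$ and taking expectations, this yields a Gronwall inequality giving $\Enum u(X(t\wedge T_n)) \le \Enum u(X(0))\,e^{Ct}$ uniformly in $n$. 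Since the counting process $N_{t\wedge T_n}$ has compensator $\int_0^{t\wedge T_n} c(X(s))\,ds$, this bound forces $\sup_n \Enum N_{t\wedge T_n} < \infty$; monotone convergence then gives $\Enum N_t < \infty$, so on $\{T_\infty \le t\}$ one would have $N_{t\wedge T_n} = n \to \infty$ with finite expectation, a contradiction unless $\Pnum(T_\infty \le t) = 0$. Hence $T_\infty = \infty$ almost surely.

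With $\Enum N_t < \infty$ in hand, existence is routine. For $f \in C_c^1(\Rnum_+^d)$ the chain rule along the flow \eqref{flow} gives $f(X(t)) - f(X(0)) = \int_0^t \sum_i F_i(X(s))\partial_i f(X(s))\,ds + \sum_{T_k \le t}[f(X(T_k)) - f(X(T_k^-))]$, and the construction \eqref{jumpvector} identifies the predictable compensator of the jump sum as $\int_0^t \sum_i c_i(X(s))\int_{\Rnum_+^d}[f(X(s)+y)-f(X(s))]\mu_i(dy)\,ds$. Subtracting the compensator leaves precisely $f(X(t)) - f(X(0)) - \int_0^t \mathcal{A}f(X(s))\,ds$, and boundedness of $f$ together with $\Enum N_t < \infty$ makes this a genuine martingale.

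The hard part is uniqueness, precisely because $\mathcal{A}$ is degenerate (no diffusion term) with unbounded jump rate $c$, so that Stroock's theorem does not apply, as already noted in the text. My plan is a localization argument. Fixing $R$ and setting $\tau_R = \inf\{t: |X(t)| \ge R\}$, on $\{|x| \le R\}$ the rate $c$ is bounded, so there $\mathcal{A}$ is a bounded jump perturbation of the first-order operator $\mathcal{A}_0 f = \sum_i F_i \partial_i f$, whose martingale problem is well posed with the deterministic flow $\phi$ as its unique solution. The bounded-perturbation theory for martingale problems \cite{ethier2009markov} then yields well-posedness of the localized (stopped) problem, and any solution of the martingale problem for $(\mathcal{A},\nu)$, once stopped at $\tau_R$, must solve it, so all solutions agree up to $\tau_R$. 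Finally, the nonexplosiveness just proved shows $\tau_R \to \infty$ almost surely as $R \to \infty$, so the finite-dimensional distributions agree on all of $\Rnum_+$. The delicate points I expect to spend the most care on are the rigorous construction of the localized generator on $\Rnum_+^d$ and the verification that a global solution, once stopped, genuinely solves the localized martingale problem.
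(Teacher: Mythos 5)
Your proposal is correct, but it departs from the paper's proof in two substantive ways, so a comparison is worthwhile. For nonexplosiveness and existence you reverse the paper's order: you first prove $T_\infty=\infty$ a.s.\ by a Lyapunov--Gronwall estimate on the constructed process (bounding $\Enum_x(1+|X(t\wedge T_n)|)$ through the compensated jump structure, deducing $\sup_n\Enum_x N_{t\wedge T_n}<\infty$, and concluding $\Pnum_x(T_\infty\le t)=0$), after which the martingale property follows easily. The paper instead first shows (Lemma \ref{lemclosure}) that the possibly explosive process solves the martingale problem for $\mathcal{A}^\Delta$ on the one-point compactification---via explicit conditioning over the first jump, induction over the $T_m$, Fatou's lemma, and Lemma \ref{SLLN} to force $X(T_m)\rightarrow\Delta$ on the explosion event---and only then obtains nonexplosiveness abstractly, from the bp-closure computation of Lemma \ref{closure} together with the restriction theorem (Lemma \ref{restriction}). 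Your route is more elementary, parallels the paper's own treatment of $X_V$ in Theorem \ref{MC} (Lyapunov function plus a nonexplosion criterion), and yields $\Enum_x|X(t)|<\infty$ under linear growth alone as a by-product (compare Lemma \ref{lemExpfinite}); its price is that the compensator identifications for $N_t$ and $\sum_{T_k\le t}|Z_k|$ must be justified by exactly the kind of iterated-conditioning computation the paper carries out in Lemma \ref{lemclosure}, and the Gronwall step should be run under $\Pnum_x$ for fixed $x$, since $\nu$ is not assumed to have finite mean. For uniqueness, both you and the paper localize via stopped martingale problems on an exhausting sequence of balls, using \cite[Theorems 4.6.1 and 4.6.2]{ethier2009markov}; the difference lies in how well-posedness of the localized problem is obtained. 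The paper truncates the coefficients radially and invokes Kurtz's equivalence theorem for L\'{e}vy-type operators with bounded Lipschitz coefficients \cite{kurtz2011equivalence}, which handles drift and jumps in one stroke; you instead regard the truncated operator as a bounded jump perturbation of the drift operator $\mathcal{A}_0f=\sum_{i=1}^dF_i\partial_if$ and appeal to well-posedness for $\mathcal{A}_0$ plus the bounded-perturbation theorem in \cite{ethier2009markov}. That is legitimate and avoids the machinery of \cite{kurtz2011equivalence}, but two points need care: (i) well-posedness for $\mathcal{A}_0$ with merely Lipschitz $F$ must itself be proved---for instance from the identity $\mathcal{A}_0(f^2)-2f\mathcal{A}_0f=0$, which forces the martingale $f(Y(t))-f(Y(0))-\int_0^t\mathcal{A}_0f(Y(s))ds$ to have zero second moment and hence any solution to follow the flow $\phi$---since the usual semigroup-core argument is delicate when the flow is only Lipschitz rather than $C^1$; and (ii) the perturbed operator must be defined globally, i.e.\ you must truncate the $c_i$ outside the ball exactly as the paper does, so that it agrees with $\mathcal{A}$ on the ball and the localization theorems apply (you flag this yourself). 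Both points are fillable, so your plan goes through.
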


\begin{proof}
The proof of this theorem will be given in Section 6.
\end{proof}


For any two probability measures $\mu_1,\mu_2\in\mathcal{P}(\Rnum_{+}^d)$ with finite means, recall that the $L^1$-Wasserstein distance between them is defined as
\begin{equation*}
W(\mu_1,\mu_2) = \inf_{\gamma\in G(\mu_1,\mu_2)}\int_{\Rnum_{+}^d\times\Rnum_{+}^d}|x-y|d\gamma(x,y),
\end{equation*}
where $G(\mu_1,\mu_2)$ is the collection of Borel probability measures on $\Rnum_{+}^d\times\Rnum_{+}^d$ with marginals $\mu_1$ and $\mu_2$ on the first and second factors, respectively \cite{chen2004markov}. The following theorem characterizes the exponential ergodicity of $X$ under the $L^1$-Wasserstein distance.

\begin{theorem}\label{thmergodicity}
Suppose that there exists
\begin{equation*}
r > \sum_{i=1}^NL_{c_i}\int_{\Rnum_+^d}|x|\mu_i(dx)
\end{equation*}
such that the following dissipative condition holds:
\begin{equation}\label{eqass}
\langle F(x)-F(y), x-y\rangle \leq -r|x-y|^2,\;\;\;\textrm{for any}\;x,y\in\Rnum_+^d,
\end{equation}
where $\langle\cdot,\cdot\rangle$ denotes the standard inner product on $\Rnum^d$. Then $X$ has a unique stationary distribution $\pi$ with finite mean such that
\begin{equation*}
W(\pi_t,\pi)\leq W(\pi_0,\pi)e^{-\tilde{r}t},\;\;\;\textrm{for any}\;t\geq 0,
\end{equation*}
where $\pi_t$ is the distribution of $X(t)$ and
\begin{equation*}
\tilde{r} = r-\sum_{i=1}^NL_{c_i}\int_{\Rnum_+^d}x\mu_i(dx) > 0.
\end{equation*}
\end{theorem}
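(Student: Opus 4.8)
The plan is to prove all three assertions (uniqueness, existence, and the exponential rate) by the \emph{coupling method}, the central step being the construction of a Markovian coupling $(X_t,Y_t)$ of two copies of the PDMP for which the $L^1$-distance $\Enum|X_t-Y_t|$ decays like $e^{-\tilde{r}t}$. The coupling I would use respects the structure of $\mathcal{A}$: between jumps the two coordinates evolve along the flow $\phi$ of their own copies of $\dot{x}=F(x)$, and the jump mechanism is coupled index by index. Recall that at a state $x$ the $i$th burst occurs at rate $c_i(x)$ with size drawn from $\mu_i$. Given $(x,y)$ I would split the $i$th stream into a \emph{common} part of rate $c_i(x)\wedge c_i(y)$, at which both coordinates jump by the \emph{same} $z\sim\mu_i$, and two \emph{residual} parts of rates $(c_i(x)-c_i(y))^+$ and $(c_i(y)-c_i(x))^+$, at which only $X$ or only $Y$ jumps by an independent $z\sim\mu_i$. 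Since $(c_i(x)\wedge c_i(y))+(c_i(x)-c_i(y))^+=c_i(x)$, each marginal is exactly a copy of $X$; constructing $(X_t,Y_t)$ pathwise through the explicit PDMP recipe of Section~\ref{secmodel} makes the marginal laws automatic, and non-explosion of the coupling is inherited from Theorem~\ref{thmwellposed}.

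The key computation is to apply the coupling generator $\tilde{\mathcal{A}}$ to $f(x,y)=|x-y|$. The drift contributes $\langle F(x)-F(y),(x-y)/|x-y|\rangle$, which is bounded by $-r|x-y|$ directly from the dissipative condition \eqref{eqass}. The common jumps leave $x-y$ unchanged and so contribute nothing, while each residual jump changes the distance by at most $|z|$ in absolute value by the reverse triangle inequality; integrating against $\mu_i$ and summing over $i$ bounds the jump part by $\sum_{i=1}^N|c_i(x)-c_i(y)|\int_{\Rnum_+^d}|z|\mu_i(dz)\le\sum_{i=1}^N L_{c_i}|x-y|\int_{\Rnum_+^d}|z|\mu_i(dz)$, using that each $c_i$ is Lipschitz with constant $L_{c_i}$. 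Adding the two contributions yields the pointwise bound $\tilde{\mathcal{A}}|x-y|\le-\tilde{r}|x-y|$ with $\tilde{r}>0$ as in the statement.

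To turn this into a decay estimate I would first record a Lyapunov bound: the same reasoning applied to $g(x)=|x|$ (taking $y=0$ in \eqref{eqass}) gives $\mathcal{A}|x|\le-\tilde{r}|x|+C$ for a constant $C$, whence $\sup_{t\ge0}\Enum|X_t|<\infty$ whenever $X_0$ has finite mean, and moreover any stationary distribution has finite mean. This moment control supplies the integrability needed to run Dynkin's formula. The main technical obstacle is precisely this analytic step, because $|x-y|$ is neither bounded nor smooth on the diagonal and hence lies outside $\mathcal{D}(\mathcal{A})$. I would handle it by mollifying, replacing $|x-y|$ by $\psi_\varepsilon(x-y)=\sqrt{|x-y|^2+\varepsilon^2}$ (which is smooth and $1$-Lipschitz, so all the estimates above survive up to an error vanishing as $\varepsilon\to0$), and by localizing with the stopping times $\tau_R=\inf\{t:|X_t|+|Y_t|\ge R\}$. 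Applying Dynkin to $\psi_\varepsilon$ on $[0,t\wedge\tau_R]$, letting $R\to\infty$ using the moment bound and non-explosion, and then $\varepsilon\to0$, produces $\tfrac{d}{dt}\Enum|X_t-Y_t|\le-\tilde{r}\,\Enum|X_t-Y_t|$, and Gr\"{o}nwall's inequality gives $\Enum|X_t-Y_t|\le e^{-\tilde{r}t}\Enum|X_0-Y_0|$.

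Finally I would extract the three conclusions. Contraction gives uniqueness immediately: if $\pi,\pi'$ are both stationary then $W(\pi,\pi')=W(\pi_t,\pi'_t)\le e^{-\tilde{r}t}W(\pi,\pi')\to0$. For existence, the contraction combined with the uniform moment bound shows that the orbit $(\pi_t)_{t\ge0}$ started from a point mass $\delta_{x_0}$ is Cauchy in the complete metric space $(\mathcal{P}_1(\Rnum_+^d),W)$ of finite-mean probability measures: indeed $W(\pi_{t+s},\pi_t)\le e^{-\tilde{r}t}W(\pi_s,\pi_0)$, while $W(\pi_s,\pi_0)\le\sup_{s\ge0}\Enum|X_s|+|x_0|$ is bounded uniformly in $s$, so the right side tends to $0$ as $t\to\infty$ uniformly in $s$; its limit $\pi$ is a stationary distribution with finite mean (the latter by lower semicontinuity of the mean under $W$-convergence). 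For the rate, given an arbitrary initial law $\pi_0$ I would start the coupling from an \emph{optimal} coupling of $\pi_0$ and $\pi$, so that $\Enum|X_0-Y_0|=W(\pi_0,\pi)$; since the $Y$-marginal stays distributed as $\pi$, the estimate above yields $W(\pi_t,\pi)\le\Enum|X_t-Y_t|\le e^{-\tilde{r}t}W(\pi_0,\pi)$, which is exactly the claimed bound.
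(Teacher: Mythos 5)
Your proposal is correct and takes essentially the same route as the paper: the identical coupling operator (synchronous drift, common jumps at rate $c_i(x)\wedge c_i(y)$, residual jumps at rates $(c_i(x)-c_i(y))^{\pm}$), the same key estimate $\tilde{\mathcal{A}}|x-y|\leq -\tilde{r}|x-y|$ from the dissipative condition plus the Lipschitz bounds on the $c_i$, regularization of $|x-y|$ with localization (the paper smooths with a piecewise-quadratic $g_n$ times a cutoff and stopping times where you use $\sqrt{|x-y|^2+\varepsilon^2}$ and $\tau_R$), and a Lyapunov-type moment bound. The concluding extraction of existence, uniqueness, and the exponential rate via completeness of the space of finite-mean laws under $W$ and an optimal initial coupling is precisely the argument the paper delegates to the cited Corollary 3 of Eberle (2016).
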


\begin{proof}
The proof of this theorem will be given in Section 7.
\end{proof}

In a previous study \cite{mackey2013dynamic}, the authors have shown that if the stationary distribution of $X$ exists, then it is ergodic in some sense. In this paper, we reinforce this result by showing that $X$ is actually exponentially ergodic under a simple dissipative condition.

Let $\{\mu_V:V>0\}$ be a sequence of probability measures on a measurable space $S$ and let $\mu$ be a probability measure on $S$. In the following, we shall use the symbol $\mu_V\Rightarrow\mu$ to denote the weak convergence of $\mu_V$ to $\mu$ as $V\rightarrow\infty$. Since the Markov chain $X_V$ has c\`{a}dl\`{a}g trajectories, its distribution is a probability measure $\mu_V$ on the path space $D(\Rnum_+,\Rnum_+^d)$ defined by
\begin{equation*}
\mu_V(\cdot) = \Pnum(X_V\in\cdot).
\end{equation*}
Let $Y$ be another process with sample paths in $D(\Rnum_+,\Rnum_+^d)$ and let $\mu$ be the distribution of $Y$. We say that $X_V$ converges weakly to $Y$ in $D(\Rnum_+,\Rnum_+^d)$, denoted by $X_V\Rightarrow Y$, if $\mu_V\Rightarrow\mu$ in $D(\Rnum_+,\Rnum_+^d)$. The following theorem characterizes the limit behavior of $X_V$ as $V\rightarrow\infty$.

\begin{theorem}\label{convergence}
Suppose that $\beta_m$ is nonzero for a finite number of $m$. Let $\nu_{V}$ be the initial distribution of $X_V$ and let $\nu$ be the initial distribution of $X$. If $\nu_V\Rightarrow\nu$ as $V\rightarrow\infty$, then $X_V\Rightarrow X$ in $D(\Rnum_+,\Rnum_+^d)$ as $V\rightarrow\infty$.
\end{theorem}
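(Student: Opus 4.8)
The plan is to prove weak convergence through the standard martingale-problem machinery of Ethier and Kurtz \cite{ethier2009markov}: establish (i) convergence of the generators $\mathcal{A}_V$ to $\mathcal{A}$ on a suitable core, (ii) relative compactness (tightness) of $\{X_V\}$ in $D(\Rnum_+,\Rnum_+^d)$, and (iii) identification of every subsequential weak limit as a solution of the martingale problem for $\mathcal{A}$. Since Theorem \ref{thmwellposed} guarantees that the martingale problem for $(\mathcal{A},\nu)$ is well posed, the unique limit point must be $X$, and tightness then upgrades ``every subsequence has a further subsequence converging to $X$'' into the assertion $X_V\Rightarrow X$.

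First I would prove the operator convergence: for every $f\in C_c^1(\Rnum_+^d)=\mathcal{D}(\mathcal{A})$, writing $f_V$ for the restriction $f|_{E_V}\in C_c(E_V)=\mathcal{D}(\mathcal{A}_V)$, I claim that $\sup_{x\in E_V}|\mathcal{A}_V f_V(x)-\mathcal{A}f(x)|\to 0$. For the reaction part I would Taylor-expand $f((n+m)/V)-f(n/V)=\tfrac{1}{V}\langle m,\nabla f(\xi_{V,m})\rangle$ with $|\xi_{V,m}-n/V|\le |m|/V$; since $f\in C_c^1$ has uniformly continuous gradient and $\beta_m$ is nonzero for only finitely many $m$ (this is exactly where the hypothesis enters), the finite sum $\sum_m V\beta_m(x)[f(x+m/V)-f(x)]$ converges uniformly to $\sum_m\beta_m(x)\langle m,\nabla f(x)\rangle=\langle F(x),\nabla f(x)\rangle$. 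For the bursting part I would show $\sum_m p_i(V,m)[f(x+m/V)-f(x)]\to\int_{\Rnum_+^d}[f(x+y)-f(x)]\mu_i(dy)$: replacing $p_i(V,m)$ by $\mu_i[m/V,(m+1)/V)$ turns the sum into a Riemann approximation of the integral of the bounded, uniformly continuous map $y\mapsto f(x+y)-f(x)$, while the error incurred by this replacement is split at $|m|\le kV$ and $|m|>kV$ and controlled by conditions (c) and (a)--(b) of \eqref{conditions}, respectively. A useful simplification throughout is that, because each $\mu_i$ is supported on $\Rnum_+^d$ and $f$ has compact support, both $\mathcal{A}f$ and $\mathcal{A}_V f_V$ are supported in a fixed bounded set, on which the Lipschitz functions $c_i$ and the locally bounded $\beta_m$ stay bounded; this makes the convergence genuinely uniform and shows $\mathcal{A}f\in C_c(\Rnum_+^d)\subset C_b(\Rnum_+^d)$, as required to apply the abstract theorem.

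Next I would establish compact containment, i.e. that for all $T,\varepsilon>0$ there is a compact $K\subset\Rnum_+^d$ with $\inf_V\Pnum(X_V(t)\in K\ \text{for all}\ t\le T)\ge 1-\varepsilon$, together with the martingale tightness criterion. The crucial ingredient is a moment bound uniform in $V$: using the linear growth of $F$ (from $L_F$), the linear growth $c_i(x)\le c_i(0)+L_{c_i}|x|$, and the finite means $\int|y|\mu_i(dy)<\infty$, one obtains $\frac{d}{dt}\Enum|X_V(t)|\le a+b\,\Enum|X_V(t)|$ for constants $a,b$ independent of $V$, whence $\sup_V\sup_{t\le T}\Enum|X_V(t)|<\infty$ by Gronwall. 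Because the generator domain is only $C_c(E_V)$, I would make this rigorous by applying the martingale problem to truncated Lyapunov functions that agree with $|x|$ on large balls (or, equivalently, via the Poisson-driven stochastic equation representation of $X_V$) and passing to the limit in the truncation. Combined with a maximal inequality for the martingale part and the uniform bound $\sup_V\|\mathcal{A}_V f_V\|_\infty<\infty$ for each fixed $f$, this yields both compact containment and the control on the modulus of continuity needed for tightness in $D(\Rnum_+,\Rnum_+^d)$.

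Finally, given a subsequential weak limit $\tilde X$ of $X_V$, I would pass to the limit in the martingale relation for $f_V(X_V(t))-f_V(X_V(0))-\int_0^t\mathcal{A}_V f_V(X_V(s))\,ds$: the operator convergence of Step 1 and the continuous mapping theorem (taking care, via the standard argument that the limit has no fixed times of discontinuity, of the at-most-countably-many exceptional times in the Skorohod topology) identify $\tilde X$ as a solution of the martingale problem for $\mathcal{A}$ with initial law $\nu$, the hypothesis $\nu_V\Rightarrow\nu$ transferring the initial condition. Well-posedness from Theorem \ref{thmwellposed} then forces $\tilde X\stackrel{d}{=}X$, so all subsequential limits coincide and $X_V\Rightarrow X$. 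I expect the main obstacle to be Step 3: because $c_i$ is unbounded and $\mathcal{D}(\mathcal{A}_V)=C_c(E_V)$ contains no Lyapunov function, establishing the uniform-in-$V$ moment bound and the attendant compact containment requires a careful truncation/localization argument rather than a direct application of the generator, whereas the operator convergence and the final identification are comparatively routine once the finiteness of $\{m:\beta_m\neq 0\}$ and conditions \eqref{conditions} are in hand.
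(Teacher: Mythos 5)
Your overall architecture (generator convergence on $C_c^1$, compact containment, identification of subsequential limits, well-posedness from Theorem \ref{thmwellposed}) is the right one, and your Step 1 is essentially the paper's key lemma: the paper proves exactly $\lim_{V\to\infty}\sup_{x\in E_V}|\mathcal{A}_Vf(x)-\mathcal{A}f(x)|=0$ by the same three-way splitting (reaction-rate error via \eqref{reactiongeneral}, a mean-value/Taylor error, and a bursting error split at $|m|\leq kV$ versus $|m|>kV$ using \eqref{conditions}). The genuine gap is in your Step 2. The differential inequality $\frac{d}{dt}\Enum|X_V(t)|\leq a+b\,\Enum|X_V(t)|$ with $V$-independent constants does not follow from the hypotheses and is in fact false in the generality of the theorem. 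The mean jump size of the bursting part of $X_V$ is $\frac{1}{V}\sum_m|m|p_i(V,m)$, not $\int|y|\mu_i(dy)$, and conditions (a)--(c) of \eqref{conditions} control $p_i(V,\cdot)$ only at scale $|m|\leq kV$ (condition (c)) together with total mass (condition (b)). They permit, for instance, $p_i(V,m)=\mu_i\left[\tfrac{m}{V},\tfrac{m+1}{V}\right)$ for $|m|\leq V^2$ with the small residual mass (of order $\mu_i(\{|y|>V\})$, which can decay as slowly as $1/(V\log^2 V)$) placed at a single site with $|m|=V^{10}$: all three conditions hold, yet $\frac{1}{V}\sum_m|m|p_i(V,m)\to\infty$ and indeed $\sup_{t\leq T}\Enum|X_V(t)|$ blows up, because rare, enormous jumps destroy every moment of $X_V$ while being invisible to weak convergence. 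Consequently no Gronwall argument can deliver $\sup_V\sup_{t\leq T}\Enum|X_V(t)|<\infty$, and your proposed remedy --- truncating the Lyapunov function rather than the jumps --- does not help, since the obstruction is the uncontrolled mean jump size, not the restriction of $\mathcal{D}(\mathcal{A}_V)$ to $C_c(E_V)$. (A repair exists: first discard jumps with $|m|>kV$, whose total rate is locally small by (b)--(c), prove containment for the truncated chain, then bootstrap; but that is a different argument from the one you outline. A further caution even in that route: only $F=\sum_m m\beta_m$ is assumed Lipschitz, the individual $\beta_m$ being merely locally bounded, so flattened Lyapunov functions also lose the exact cancellation that makes the reaction part linearly bounded.)

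The paper sidesteps all of this with a compactification trick worth knowing: it applies the Ethier--Kurtz convergence theorem (their Corollaries 4.8.12 and 4.8.16) with state space $S=(\Rnum_+^d)^\Delta$, the one-point compactification, so that compact containment holds trivially; the generator convergence only involves $f\in C_c^1(\Rnum_+^d)$, which is blind to rare huge jumps; this yields $X_V\Rightarrow X$ in $D(\Rnum_+,(\Rnum_+^d)^\Delta)$, which is then transferred back to $D(\Rnum_+,\Rnum_+^d)$ via Corollary 3.3.2 of Ethier--Kurtz, since both $X_V$ and $X$ almost surely have paths in $\Rnum_+^d$. In short: your Steps 1 and 3 align with the paper, but Step 2 as stated would fail, and the compactification is precisely what makes the paper's proof work under its weak assumptions on $p_i(V,\cdot)$.
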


\begin{proof}
The proof of this theorem will be given in Section 8.
\end{proof}

\section{Applications in single-cell stochastic gene expression}
In this section, we apply our abstract theorems to an important biological problem. Over the past two decades, significant progress has been made in the kinetic theory of single-cell stochastic gene expression \cite{paulsson2005models}. Based on the central dogma of molecular biology, the expression of a gene in a single cell with size $V$ can be described by a standard two-stage model \cite{shahrezaei2008analytical} consisting of transcription and translation, as illustrated in Fig. \ref{model}(a). The transcription and translation steps describe the synthesis of the mRNA and protein, respectively. Both the mRNA and protein can be degraded. Here, $s_n$ is the transcription rate, $u$ is the translation rate, and $v$ and $r$ are the degradation rates of the mRNA and protein, respectively. In real biological systems, the products of many genes may directly or indirectly regulate their own expression via a positive or negative feedback loop. Due to feedback controls, the transcription rate $s_n = c(n/V)$ is a function of the protein concentration $n/V$. In the presence of a positive feedback loop, $c(x)$ is an increasing function. In the presence of a negative feedback loop, $c(x)$ is a decreasing function. If the gene is unregulated, $c(x) = c$ is a constant function.
\begin{figure}[!htb]
\centerline{\includegraphics[width=1\textwidth]{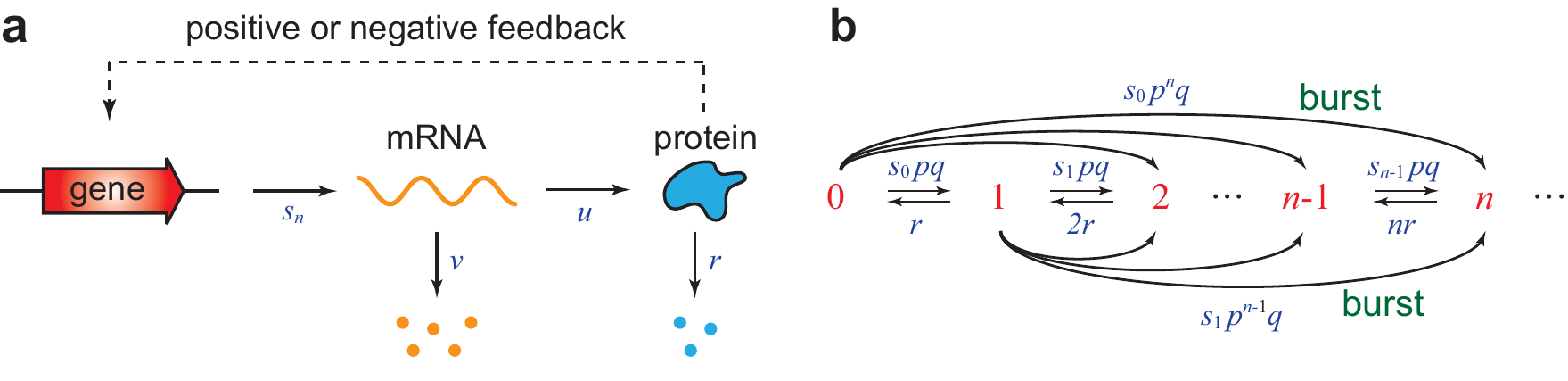}}
\caption{\textbf{Models of single-cell stochastic gene expression.} (a) Central dogma of molecular biology. (b) The transition diagram of the Markov chain model.}\label{model}
\end{figure}

In single-cell experiments \cite{suter2011mammalian}, it was consistently observed that the mRNA decays much faster than the corresponding protein \cite{shahrezaei2008analytical}. This suggests that the process of protein synthesis followed by mRNA degradation is essentially instantaneous. Once an mRNA copy is synthesized, it can either produce a protein copy with probability $p = u/(u+v)$ or be degraded with probability $q = v/(u+v)$. Therefore, the probability that each mRNA copy produces $k$ protein copies before it is finally degraded is $p^kq$, which has a geometric distribution. Then the rate at which $k$ protein copies are synthesized will be the product of the transcription rate $s_n$ and the geometric probability $p^kq$. Thus, the evolution of the protein copy number in a single cell can be modeled by a continuous-time Markov chain $N = \{N(t):t\geq 0\}$ on nonnegative integers with transition diagram depicted in Fig. \ref{model}(b) \cite{paulsson2000random, jia2017emergent}. The phenomenon that a large number of protein copies can be produced within a very short period is referred to as random translational bursts, which correspond to the long-range jumps in Fig. \ref{model}(b) \cite{jia2017simplification}. The number of protein copies synthesized in a single burst is called the \emph{burst size} of the protein. Since the burst size has a geometric distribution, its expected value is given by
\begin{equation*}
\sum_{k=1}^\infty kp^kq = \frac{p}{q}.
\end{equation*}

In many single-cell experiments such as flow cytometry and fluorescence microscopy, one usually obtains data of protein concentrations, instead of protein copy numbers \cite{cai2006stochastic}. Let $V>0$ be a scaling parameter which usually denotes the average cell volume \cite{taniguchi2010quantifying} or maximal protein copy number \cite{assaf2011determining, lv2014constructing}, and let $X_V(t) = N(t)/V$ denote the concentration of the protein at time $t$. Then the concentration process $X_V = \{X_V(t):t\geq 0\}$ is a one-dimensional GDDMC on the lattice
\begin{equation*}
E_V = \set{\tfrac{n}{V}:n=0,1,2,\cdots}
\end{equation*}
associated with the operator
\begin{equation}\label{defgenechain}
\mathcal{A}_Vf\left(\tfrac{n}{V}\right)
= rn\left[f\left(\tfrac{n-1}{V}\right)-f\left(\tfrac{n}{V}\right)\right]
+c\left(\tfrac{n}{V}\right)\sum_{m=1}^{\infty}p_V^{m}q_V
\left[f\left(\tfrac{n+m}{V}\right)-f\left(\tfrac{n}{V}\right)\right].
\end{equation}
Here we assume that $p = p_V$ and $q = q_V$ depend on $V$ and $c:\Rnum_+\rightarrow\Rnum_+$ is a Lipschitz function. It is easy to see that $\mathcal{A}_V$ is a special case of the operator \eqref{eqdisoperator} with
\begin{equation*}
\beta_{-1}(x) = rx,\;\;\;\beta_m(x) = 0\;\textrm{for any}\;m\neq -1,\;\;\;p(V,m) = p_V^mq_V.
\end{equation*}
In living cells, the mean burst size $p_V/q_V$ of the protein is large, typically on the order of 100 for a bacterial gene \cite{paulsson2005models}. Thus, it is natural to require that the mean burst size scales with the parameter $V$ as
\begin{equation*}
\frac{p_V}{q_V} = \frac{V}{\lambda},
\end{equation*}
where $\lambda>0$ is a constant. On the other hand, let $X = \{X(t):t\geq 0\}$ be a PDMP associated with the operator
\begin{equation}\label{deflevygene}
\mathcal{A}f(x) = -rxf'(x)+c(x)\int_0^{\infty}[f(x+y)-f(x)]\lambda e^{-\lambda y}dy.
\end{equation}
It is worth noting that $\mathcal{A}$ is a special case of the operator \eqref{eqoperator} with
\begin{equation*}
F(x) = -rx,\;\;\;\mu(dx)= \lambda e^{-\lambda x}dx.
\end{equation*}

The following theorem, which follows directly from Theorem \ref{convergence}, characterizes the limit behavior of the concentration process $X_V$ as $V\rightarrow\infty$.

\begin{theorem}
Let $\nu_{V}$ be the initial distribution of the GDDMC model $X_V$ of single-cell stochastic gene expression kinetics and let $\nu$ be the initial distribution of the PDMP model $X$. If $\nu_V\Rightarrow\nu$ as $V\rightarrow\infty$, then $X_V\Rightarrow X$ in $D(\Rnum_+,\Rnum_+)$ as $V\rightarrow\infty$.
\end{theorem}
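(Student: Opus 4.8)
The statement asserts that the one-dimensional GDDMC $X_V$ with operator \eqref{defgenechain} converges weakly to the PDMP $X$ with operator \eqref{deflevygene}. Since the paper has already recorded that \eqref{defgenechain} is the special case $d=1$, $N=1$ of the general operator \eqref{eqdisoperator} with $\beta_{-1}(x)=rx$, $\beta_m\equiv 0$ for $m\neq -1$, $c_1=c$, and $p(V,m)=p_V^m q_V$, the plan is simply to verify that this concrete model meets every hypothesis of Theorem \ref{convergence} and then invoke that theorem. The weak-convergence hypothesis $\nu_V\Rightarrow\nu$ is assumed, so the work reduces to checking the standing model assumptions of Section \ref{secmodel} together with the extra hypothesis that $\beta_m$ is nonzero for only finitely many $m$.

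Most of these checks are immediate. There is exactly one nonzero coefficient, $\beta_{-1}(x)=rx$, so the finiteness hypothesis holds trivially, and $\sum_{m\neq 0}|m|\beta_m(x)=rx<\infty$. The reaction rate satisfies \eqref{reaction} exactly, since $\hat q_V(n/V,(n-1)/V)=rn=V\beta_{-1}(n/V)$, and hence \eqref{reactiongeneral} holds. The drift is $F(x)=\sum_{m\neq 0}m\beta_m(x)=-rx$, which is Lipschitz with constant $r$ and satisfies $F(0)=0\geq 0$, so the vector field points inward at the boundary and the flow $\phi$ of \eqref{flow} is well defined. The burst measure $\mu(dx)=\lambda e^{-\lambda x}dx$ is a Borel probability measure on $\Rnum_+\setminus\{0\}$ with finite mean $1/\lambda$, and $c$ is Lipschitz by assumption. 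It remains to verify conditions (a)--(c) of \eqref{conditions} for the geometric weights $p(V,m)=p_V^m q_V$.

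To do so I would first solve $p_V/q_V=V/\lambda$ together with $p_V+q_V=1$ to obtain $p_V=V/(V+\lambda)$ and $q_V=\lambda/(V+\lambda)$. Condition (a) is then the finiteness of the mean burst size $\sum_{m\geq 1}m\,p_V^m q_V=p_V/q_V=V/\lambda<\infty$, and condition (b) follows from the exact identity $\sum_{m\geq 1}p_V^m q_V=p_V\to 1$ as $V\to\infty$. The substantive step, and the one I expect to be the main obstacle, is condition (c): I must show that $V\,\sup_{1\leq m\leq kV}\bigl|p_V^m q_V-\mu[\tfrac{m}{V},\tfrac{m+1}{V})\bigr|\to 0$ for every $k>0$, where $\mu[\tfrac{m}{V},\tfrac{m+1}{V})=e^{-\lambda m/V}\bigl(1-e^{-\lambda/V}\bigr)$. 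This is a local-limit-type estimate expressing that a geometric law with mean $V/\lambda$, rescaled by $1/V$, approaches the exponential law of rate $\lambda$ fast enough to absorb the amplifying factor $V$. The natural route is to split the difference as $(p_V^m-e^{-\lambda m/V})q_V+e^{-\lambda m/V}\bigl(q_V-(1-e^{-\lambda/V})\bigr)$ and Taylor-expand. Using $\ln p_V=-\lambda/V+O(V^{-2})$ one finds $p_V^m-e^{-\lambda m/V}=e^{-\lambda m/V}\bigl(e^{m\,O(V^{-2})}-1\bigr)=O(kV^{-1})$ uniformly over $m\leq kV$, while $q_V-(1-e^{-\lambda/V})=O(V^{-2})$; multiplying by the prefactors $q_V=O(V^{-1})$ and $e^{-\lambda m/V}\leq 1$ respectively bounds the sup by order $V^{-2}$, so that $V$ times this quantity tends to $0$. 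With (a)--(c) verified, Theorem \ref{convergence} yields $X_V\Rightarrow X$ in $D(\Rnum_+,\Rnum_+)$.
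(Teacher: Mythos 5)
Your proposal is correct and follows essentially the same route as the paper: reduce everything to Theorem \ref{convergence} and verify conditions (a)--(c) of \eqref{conditions} for the geometric weights, with (a) and (b) being the exact summation identities and (c) an elementary Taylor/mean-value estimate showing the rescaled geometric law is $O(V^{-2})$-close, uniformly for $m\leq kV$, to the exponential interval mass. The only (cosmetic) difference is in the decomposition for (c): the paper writes $\mu\left[\tfrac{m}{V},\tfrac{m+1}{V}\right) = \tfrac{\lambda}{V}e^{-\lambda\xi_m}$ via the mean value theorem and peels off $|Vq_V-\lambda|$ first, whereas you compute the interval mass exactly as $e^{-\lambda m/V}\bigl(1-e^{-\lambda/V}\bigr)$ and split the difference accordingly; both give the same $O(V^{-1})$ bound after multiplying by $V$.
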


\begin{proof}
By Theorem \ref{convergence}, we only need to check that $p(V,m) = p_V^mq_V$ satisfies the three conditions listed in \eqref{conditions}. For any $V>0$, it is easy to see that
\begin{equation*}
\sum_{m=1}^\infty mp(V,m) = \sum_{m=1}^\infty mp_V^mq_V = \frac{p_V}{q_V} < \infty.
\end{equation*}
Since $p_V/q_V = V/\lambda$ and $q_V = 1-p_V$, we have $p_V\rightarrow 1$ as $V\rightarrow\infty$. This shows that
\begin{equation*}
\lim_{V\rightarrow\infty}\sum_{m=1}^{\infty}p(V,m)
= \lim_{V\rightarrow\infty}\sum_{m=1}^{\infty}p_V^mq_V = \lim_{V\rightarrow\infty}p_V = 1.
\end{equation*}
Finally, it follows from the mean value theorem that
\begin{equation*}
\mu\left[\tfrac{m}{V},\tfrac{m+1}{V}\right)
= \int_{\frac{m}{V}}^{\frac{m+1}{V}}\lambda e^{-\lambda x}dx
= \frac{\lambda}{V}e^{-\lambda\xi_m},
\end{equation*}
where $\xi_m$ is between $m/V$ and $(m+1)/V$. Applying the mean value theorem again yields
\begin{align*}
V\left|p(V,m)-\mu\left[\tfrac{m}{V},\tfrac{m+1}{V}\right)\right|
&= |Vp_V^mq_V-\lambda e^{-\lambda\xi_m}| = |Vq_V e^{m\log(1-q_V)}-\lambda e^{-\lambda\xi_m}|\\
&\leq |Vq_V-\lambda|e^{m\log(1-q_V)}+\lambda|e^{m\log(1-q_V)}-e^{-\lambda\xi_m}|\\
&\leq |Vq_V-\lambda|+\lambda|m\log(1-q_V)+\lambda\xi_m|\\
&\leq |Vq_V-\lambda|
+\lambda\left|m\log(1-q_V)+\tfrac{\lambda m}{V}\right|+\lambda^2\left|\tfrac{m}{V}-\xi_m\right|\\
&\leq |Vq_V-\lambda|+\tfrac{\lambda m}{V}|V\log(1-q_V)+\lambda|+\tfrac{\lambda^2}{V}.
\end{align*}
Since $p_V/q_V = V/\lambda$ and $q_V = 1-p_V$, it is easy to check that
\begin{equation*}
\lim_{V\rightarrow\infty}Vq_V = -\lim_{V\rightarrow\infty}V\log(1-q_V) = \lambda.
\end{equation*}
Thus we finally obtain that
\begin{equation*}
\lim_{V\rightarrow\infty}V\sup_{0<m\leq kV}\left|p(V,m)-\mu\left[\tfrac{m}{V},\tfrac{m+1}{V}\right)\right|
= 0.
\end{equation*}
So far, we have validated all the three conditions listed in \eqref{conditions}.
\end{proof}

In fact, both the mesoscopic GDDMC model \cite{paulsson2000random, mackey2013dynamic, kumar2014exact, jia2017simplification, jia2017stochastic} and macroscopic PDMP model \cite{friedman2006linking, pajaro2015shaping, jkedrak2016time, bressloff2017stochastic, jia2017emergent, jia2019macroscopic} have been widely used to describe single-cell stochastic gene expression kinetics. In particular, the gene expression models described above are particular examples of the models studied in \cite{mackey2013dynamic}. In this paper, we establish a deep connection between the mesoscopic and macroscopic models by viewing the latter as the weak limit of the former in the Skorohod space. This provides a rigorous theoretical foundation and justifies the wide application for the empirical PDMP mdoel.

In our general theory, we have shown that if the dissipative condition \eqref{eqass} is satisfied, then there exists a unique stationary distribution for the limit process $X$ among all probability measures with finite means. However, for the PDMP model of stochastic gene expression, we can prove the stronger result that the stationary distribution is unique among all probability measures.

\begin{theorem}\label{invariant}
Suppose that $r > L_c/\lambda$ and $c(0)>0$. Then $X_V$ has a unique stationary distribution
\begin{equation}\label{invmeasure}
\pi_V\left(\tfrac{n}{V}\right)
= A_V\frac{p_V^n}{n!}\prod_{k=0}^{n-1}\left(\frac{1}{r}c\left(\tfrac{k}{V}\right)+k\right),\;\;\;n\geq 0,
\end{equation}
where $A_V>0$ is a normalization constant. Moreover, $X$ also has a unique stationary distribution $\pi(dx) = p(x)dx$, whose density is given by
\begin{equation*}
p(x) = Ax^{-1}e^{-\lambda x+\frac{1}{r}\int_1^x\frac{c(y)}{y}dy},\;\;\;x>0,
\end{equation*}
where $A>0$ is a normalization constant.
\end{theorem}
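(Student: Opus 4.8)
The plan is to treat the two models separately, reducing the stationarity condition in each case to a first-order relation that can be solved explicitly, and then checking normalizability (where the two hypotheses enter) and uniqueness.

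For the Markov chain $X_V$, write $P_n=\pi_V(n/V)$ and exploit the one-sided structure of the dynamics: degradation lowers the state by exactly one unit, whereas bursts raise it by an arbitrary amount with geometric weights $p_V^m q_V$. Rather than work with the full global balance equation, which is nonlocal because of the long-range jumps, I would use the stationary flux balance across the cut between states $n$ and $n+1$. The downward flux across the cut comes only from the degradation transition out of $n+1$, giving $r(n+1)P_{n+1}$, while the upward flux is $\sum_{j=0}^n P_j\,c(j/V)\sum_{m\ge n+1-j}p_V^m q_V=\sum_{j=0}^n P_j\,c(j/V)\,p_V^{\,n+1-j}$. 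Equating the two and subtracting $p_V$ times the identity at level $n-1$ telescopes the convolution and yields the clean recurrence $r(n+1)P_{n+1}=p_V\big(rn+c(n/V)\big)P_n$; iterating from $P_0$ reproduces exactly the product formula \eqref{invmeasure}. To complete this part I would verify summability by the ratio test: since $c$ is Lipschitz, $P_{n+1}/P_n=\tfrac{p_V}{n+1}\big(\tfrac1r c(n/V)+n\big)$, and using $c(n/V)\le c(0)+L_c n/V$ together with $p_V=V/(V+\lambda)$ gives $\limsup_n P_{n+1}/P_n\le p_V(1+L_c/(rV))=(rV+L_c)/(rV+r\lambda)$, which is $<1$ precisely when $r>L_c/\lambda$. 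Irreducibility of the chain on $\Nnum$ (guaranteed by $c(0)>0$, so that state $0$ communicates with every state) together with nonexplosiveness from Theorem \ref{MC} then upgrades the normalizable invariant measure to the unique stationary distribution.

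For the PDMP $X$, I would first turn the weak stationarity identity $\int\mathcal{A}f\,d\pi=0$, $f\in C_c^1(\Rnum_+)$, into a pointwise equation for the density. Assuming $\pi(dx)=p(x)dx$, integrating the drift term by parts and applying Fubini to the jump term (substituting $w=x+y$) recasts the identity as $rp(x)+rxp'(x)-c(x)p(x)+\int_0^x c(z)p(z)\lambda e^{-\lambda(x-z)}\,dz=0$. Introducing the upward flux $U(x)=\int_0^x c(z)p(z)e^{-\lambda(x-z)}\,dz$, one checks that this equation is equivalent to $U'(x)=\tfrac{d}{dx}\big(rx\,p(x)\big)$, and since $U(0)=0$ it integrates to the flux-balance identity $rx\,p(x)=U(x)$. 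Differentiating once more produces the separable ODE $rx\,p'(x)=\big(c(x)-r-\lambda rx\big)p(x)$, whose solution is the claimed density. Normalizability is where the hypotheses enter: near $x=0$ the Lipschitz bounds on $c$ give $p(x)\sim A\,x^{c(0)/r-1}$, integrable iff $c(0)>0$; near infinity $c(y)\le c(0)+L_c y$ gives $p(x)\lesssim x^{c(0)/r-1}e^{-(\lambda-L_c/r)x}$, integrable (and with finite mean) iff $r>L_c/\lambda$. Conversely, checking that this $p$ solves the ODE shows $\pi(dx)=p(x)dx$ is a genuine stationary distribution.

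The main obstacle is the uniqueness of the stationary distribution for $X$ among \emph{all} probability measures, not merely those of finite mean. Theorem \ref{thmergodicity} already gives uniqueness within the finite-mean class (the dissipative condition \eqref{eqass} holds with constant $r$ since $F(x)=-rx$, and $\sum_i L_{c_i}\int|x|\mu_i(dx)=L_c/\lambda<r$), so it suffices to show every stationary measure has finite mean. For this I would use the Lyapunov function $V(x)=x$, for which $\mathcal{A}V(x)=-rx+c(x)/\lambda\le-(r-L_c/\lambda)x+c(0)/\lambda$, a drift inequality $\mathcal{A}V\le-\gamma V+b$ with $\gamma=r-L_c/\lambda>0$ and $b=c(0)/\lambda$. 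The delicate point is that $V\notin C_c^1(\Rnum_+)=\mathcal{D}(\mathcal{A})$, so $\int\mathcal{A}V\,d\pi=0$ is not immediate; I would justify it by truncating $V$ to a sequence in $C_c^1$, applying the stationarity identity to the truncations, and passing to the limit by monotone/Fatou arguments to conclude $\int V\,d\pi\le b/\gamma<\infty$. The same truncation care is needed to pass rigorously from the weak identity to the pointwise integro-differential equation for an a priori possibly singular stationary $\pi$, and thus to force any such $\pi$ into the one-parameter family solving the ODE; establishing this absolute continuity is the technical heart of the uniqueness claim.
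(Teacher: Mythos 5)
Your proposal is correct in substance, but it takes a genuinely different route from the paper. The paper's proof is essentially by citation: stationarity of \eqref{invmeasure} and of $p(x)dx$ is quoted from Corollary 3.3 and Remark 4.10 of \cite{mackey2013dynamic}, uniqueness for $X_V$ from irreducibility, and uniqueness for $X$ by first invoking \cite[Theorem 3.1]{lopker2013time} (this is precisely where $c(0)>0$ enters for the paper) to force any stationary law of $X$ to be absolutely continuous, and then quoting Corollary 4.9 of \cite{mackey2013dynamic}. You instead derive everything from scratch: the cut/flux-balance recurrence $r(n+1)P_{n+1}=p_V\big(rn+c(n/V)\big)P_n$ (your telescoping is correct and reproduces \eqref{invmeasure}), the ratio test giving summability exactly when $r>L_c/\lambda$, the flux identity $rx\,p(x)=U(x)$ and the resulting separable ODE for the PDMP density, and, most notably, uniqueness for $X$ by showing that \emph{every} stationary measure has finite mean and then invoking Theorem \ref{thmergodicity} (whose hypotheses you correctly verify: $F(x)=-rx$ is dissipative with constant $r$, and $L_c\int x\,\mu(dx)=L_c/\lambda<r$). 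This replaces the paper's absolute-continuity argument by a Lyapunov/moment argument, making the proof self-contained and independent of \cite{lopker2013time} and \cite{mackey2013dynamic}; what the paper's route buys is brevity, and what yours buys is an explicit mechanism showing where both formulas come from.

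Two caveats. First, your truncation of $V(x)=x$ needs more care than ``monotone/Fatou'': the right tool is exactly the construction in the paper's Lemma \ref{lemExpfinite} | globally $1$-Lipschitz, compactly supported $f_n$ with $f_n(x)=x$ on $[1/n,n]$ and $\mathcal{A}f_n\leq C$ uniformly in $n$ | which yields $(r-L_c/\lambda)\int_{[1/n,n]}x\,\pi(dx)\leq c(0)/\lambda+C$ and then monotone convergence; a naive truncation whose jump term is not controlled produces error terms involving $\int_{(n,\infty)}x\,\pi(dx)$, which is not known to be finite a priori. Second, your closing sentence | that establishing absolute continuity of an arbitrary stationary $\pi$ is ``the technical heart'' | is a red herring within your own scheme: once every stationary measure has finite mean, Theorem \ref{thmergodicity} identifies it with the finite-mean stationary distribution $p(x)dx$ that you have already exhibited (its stationarity following from $\int\mathcal{A}f\,d\pi=0$ for $f\in C_c^1$ together with well-posedness, Theorem \ref{thmwellposed} and the Echeverria-type result in \cite[Section 4.9]{ethier2009markov}); no regularity of a general stationary $\pi$ is needed, and that step belongs to the paper's route, not yours.
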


\begin{proof}
The fact that $\pi_V$ is a stationary distribution for $X_V$ follows from Corollary 3.3 in \cite{mackey2013dynamic} and the uniqueness of the stationary distribution follows from the irreducibility of $X_V$. When $c(0)>0$, any stationary distribution for $X$ must have a density \cite[Theorem 3.1]{lopker2013time} and thus its uniqueness follows from Corollary 4.9 in \cite{mackey2013dynamic}. The fact that $\pi$ is a stationary distribution for $X$ follows from Remark 4.10 in \cite{mackey2013dynamic}.
\end{proof}

\begin{remark}
In the degenerate case of $c(0) = 0$, state $0\in E_V$ is the only absorbing state of the Markov chain $X_V$ and thus $\pi_V = \delta_0$ is the unique stationary distribution for $X_V$, where $\delta_0$ denotes the point mass at $0$. Moreover, it is easy to see that $\pi = \delta_0$ is a stationary distribution for the limit process $X$, which has no density. By \cite[Theorem 2.2]{wang2010regularity} and \cite[Theorem 1]{komorowski2010ergodicity}, the stationary distribution of $X$ is unique if there exists $x_0\geq 0$ such that
\begin{equation*}
\liminf_{t\rightarrow\infty}\frac{1}{t}\int_0^t\Pnum_x(x_0-\delta< X_s< x_0+\delta)ds > 0,
\;\;\;\textrm{for any}\;x\geq 0,\delta>0.
\end{equation*}
Since $\pi = \delta_0$ has a finite mean, it follows from Theorem \ref{thmergodicity} that $W(\pi_t,\pi)\rightarrow 0$ as $t\rightarrow\infty$. Since convergence under the $L^1$- Wasserstein distance implies weak convergence, for any $x>0$ and $\delta>0$,
\begin{equation*}
\liminf_{t\rightarrow\infty}\frac{1}{t}\int_0^t\Pnum_x(X_s<\delta)ds
\geq \lim_{t\rightarrow\infty}\Pnum_x(X_t<\delta) = \pi([0,\delta)) = 1 > 0.
\end{equation*}
Therefore, $\pi = \delta_0$ is the unique stationary distribution for $X$.
\end{remark}

Recall that if the gene is unregulated, then $c(x) = c$ is a constant function. The following corollary follows directly from Theorem \ref{invariant}.

\begin{corollary}
Suppose that $c(x) = c>0$ is a constant function. Then the unique stationary distribution of $X_V$ is the negative binomial distribution
\begin{equation*}
\pi_V\left(\tfrac{n}{V}\right) = \frac{(c/r)_n}{n!}p_V^n(1-p_V)^{c/r},\;\;\;n\geq 0,
\end{equation*}
where $(x)_n = x(x+1)\cdots(x+n-1)$ is the Pochhammer symbol. Moreover, the unique stationary distribution of $X$ is the gamma distribution
\begin{equation*}
p(x) = \frac{1}{\Gamma(c/r)}x^{c/r-1}e^{-\lambda x},\;\;\;x>0.
\end{equation*}
\end{corollary}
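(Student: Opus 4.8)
The plan is to specialize the two stationary-distribution formulas furnished by Theorem~\ref{invariant} to the constant case $c(x)\equiv c$ and then evaluate the two normalization constants explicitly via standard special-function identities. Note first that the hypotheses of Theorem~\ref{invariant} are automatically satisfied here: a constant function has Lipschitz constant $L_c=0<r\lambda$, and $c(0)=c>0$. Hence $\pi_V$ and $\pi$ are given in closed form up to normalization, and the only work remaining is a direct substitution followed by one elementary summation and one elementary integration.

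For the discrete model, I would substitute $c(k/V)=c$ into \eqref{invmeasure}, which collapses the product into a Pochhammer symbol:
\begin{equation*}
\prod_{k=0}^{n-1}\left(\tfrac{c}{r}+k\right)
=\tfrac{c}{r}\left(\tfrac{c}{r}+1\right)\cdots\left(\tfrac{c}{r}+n-1\right)
=\left(\tfrac{c}{r}\right)_n,
\end{equation*}
so that $\pi_V(n/V)=A_V\,p_V^n(c/r)_n/n!$. To pin down $A_V$, I would invoke the generalized binomial series $\sum_{n\geq 0}(\alpha)_n x^n/n!=(1-x)^{-\alpha}$ with $\alpha=c/r$ and $x=p_V$ (valid since $0<p_V<1$), which forces $A_V=(1-p_V)^{c/r}$ and yields precisely the claimed negative binomial form.

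For the PDMP limit, substituting $c(y)\equiv c$ into the density of Theorem~\ref{invariant} gives $\int_1^x (c/y)\,dy=c\log x$, so the exponential factor reduces to $e^{(c/r)\log x}=x^{c/r}$ and therefore $p(x)=A\,x^{c/r-1}e^{-\lambda x}$. The normalization then follows from the gamma integral $\int_0^\infty x^{c/r-1}e^{-\lambda x}\,dx=\Gamma(c/r)/\lambda^{c/r}$, which identifies $p$ as the gamma density with shape parameter $c/r$ and rate $\lambda$, as asserted.

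I do not anticipate any genuine obstacle: the statement is a direct specialization of Theorem~\ref{invariant}, and its entire content is the recognition of the Pochhammer symbol and the generalized binomial series in the discrete case, together with the logarithmic antiderivative and the gamma integral in the continuous case. The only point demanding any care is the bookkeeping of the two normalization constants $A_V$ and $A$.
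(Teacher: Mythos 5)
Your proposal is correct and takes essentially the same route as the paper, which offers no separate proof and simply states that the corollary ``follows directly from Theorem \ref{invariant}''; your specialization of \eqref{invmeasure} to constant $c$, the Pochhammer identification, and the two normalization computations supply exactly the details the paper leaves implicit. One minor remark: your gamma integral correctly forces $A = \lambda^{c/r}/\Gamma(c/r)$, whereas the paper's displayed density omits the factor $\lambda^{c/r}$ — this is an apparent typo in the paper's statement, not a gap in your argument.
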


The following theorem shows that the stationary distribution of the GDDMC model also converges to that of the PDMP model as $V\rightarrow \infty$.

\begin{theorem}
Suppose that $r > L_c/\lambda$. Then $\pi_V\Rightarrow\pi$ as $V\rightarrow \infty$.
\end{theorem}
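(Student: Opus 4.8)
The plan is to avoid a direct asymptotic analysis of the explicit product formula for $\pi_V$ and instead run a soft compactness argument built on the two limit theorems already established: the functional convergence $X_V\Rightarrow X$ (Theorem \ref{convergence}) and the exponential ergodicity with uniqueness of the stationary distribution (Theorem \ref{thmergodicity}). The degenerate case $c(0)=0$ is trivial, since then $\pi_V=\delta_0$ and $\pi=\delta_0$, so from now on I focus on the nontrivial regime. The strategy has three movements: (i) prove that $\{\pi_V:V>0\}$ is tight, hence relatively compact in the weak topology; (ii) show that every subsequential weak limit is a stationary distribution for $X$ with finite mean; and (iii) invoke uniqueness to identify that limit as $\pi$, so that the whole family converges.

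For tightness I would exploit the stationarity of $\pi_V$ through a first-moment (Foster--Lyapunov) estimate. Taking the test function $f=\mathrm{id}$ and using $\beta_{-1}(x)=rx$ together with $\sum_m m\,p_V^m q_V = p_V/q_V = V/\lambda$, a direct computation gives
\begin{equation*}
\mathcal{A}_V\,\mathrm{id}(x) = -rx+\tfrac{1}{\lambda}c(x).
\end{equation*}
The explicit formula \eqref{invmeasure} shows that $\limsup_{n\to\infty}\pi_V((n+1)/V)/\pi_V(n/V)\le (rV+L_c)/(rV+r\lambda)<1$ precisely when $r>L_c/\lambda$, so $\pi_V$ has geometric tails and finite moments; this justifies applying the global balance relation $\int \mathcal{A}_V\,\mathrm{id}\,d\pi_V=0$ to the unbounded function $\mathrm{id}$. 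Hence $r\int x\,d\pi_V = \tfrac1\lambda\int c\,d\pi_V$, and the Lipschitz bound $c(x)\le c(0)+L_c x$ yields the uniform estimate $\int x\,d\pi_V \le c(0)/(r\lambda-L_c)$. Markov's inequality then gives tightness of $\{\pi_V\}$.

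Next, let $\pi_{V_k}\Rightarrow\rho$ be any subsequential limit. Running $X_{V_k}$ from the initial distribution $\nu_{V_k}=\pi_{V_k}$, Theorem \ref{convergence} (applicable since only $\beta_{-1}$ is nonzero) gives $X_{V_k}\Rightarrow X^{\rho}$ in $D(\Rnum_+,\Rnum_+)$, where $X^\rho$ is the PDMP started from $\rho$. Because each $X_{V_k}$ starts from its own stationary law, it is a stationary process, so its one-dimensional marginal at every time $t$ equals $\pi_{V_k}$. Since a PDMP has no fixed discontinuities, every fixed $t$ is a continuity time of the limit, so Skorohod convergence upgrades to marginal convergence $X_{V_k}(t)\Rightarrow X^\rho(t)$; letting $k\to\infty$ forces the law of $X^\rho(t)$ to equal $\rho$ for all $t$. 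Thus $\rho$ is a stationary distribution for $X$, and by the uniform moment bound and Fatou's lemma it has finite mean. The dissipative condition \eqref{eqass} holds here with equality since $F(x)=-rx$, and the threshold $r>\sum_i L_{c_i}\int|x|\mu_i(dx)=L_c/\lambda$ is exactly our hypothesis, so Theorem \ref{thmergodicity} applies and guarantees a unique stationary distribution with finite mean, namely $\pi$. Therefore $\rho=\pi$, and since every subsequential limit of the tight family $\{\pi_V\}$ equals $\pi$, we conclude $\pi_V\Rightarrow\pi$.

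The main obstacle I anticipate is the rigor of the two ``unbounded'' steps rather than any deep new idea: first, justifying the stationarity identity $\int\mathcal{A}_V\,\mathrm{id}\,d\pi_V=0$ for the linear, non-compactly-supported test function, which I would handle by a truncation argument controlled by the geometric tail bound above (or, self-containedly, by reading the same inequality as a Foster--Lyapunov drift condition that simultaneously yields positive recurrence and the uniform moment bound); and second, the interchange of the limits $k\to\infty$ and $t\to\infty$, which I sidestep by using stationarity of $X_{V_k}$ so that no genuine double limit is needed. A completely independent route would be the hard calculation: take logarithms in \eqref{invmeasure}, convert the sum $\sum_{k}\log(1+c(k/V)/(rk))$ into the Riemann integral $\tfrac1r\int\frac{c(y)}{y}dy$ (isolating the $\tfrac{c(0)}{r}\log n$ divergence against the Stirling term $-\log n$), show that the scaled masses $V\pi_V(\lfloor xV\rfloor/V)$ converge pointwise to $p(x)$ up to $V$-dependent constants absorbed by $A_V$, and finish with a Scheff\'e-type argument; the tightness estimate above supplies exactly the tail control that such an argument also requires.
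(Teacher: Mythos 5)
Your proposal is correct and follows essentially the same route as the paper: a first-moment Lyapunov bound with $\phi(x)=x$ (whose drift $-rx+c(x)/\lambda$ is exactly your $\mathcal{A}_V\,\mathrm{id}$) gives relative compactness of $\{\pi_V\}$, subsequential limits are identified as stationary distributions of $X$ using the process-level convergence of Theorem \ref{convergence}, and uniqueness of the stationary distribution of $X$ concludes. The only difference is one of sourcing: the paper cites Ethier--Kurtz for both steps (Lemmas 4.9.5 and 4.9.13 for tightness via a supermartingale, Theorem 4.9.12 for the fact that weak limits of stationary laws are stationary for the limit process), whereas you prove these by hand through the stationary balance identity with geometric tail control and through stationarity of marginals plus the absence of fixed discontinuities of the PDMP.
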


\begin{proof}
For any $V\geq 1$ and $n\geq0$, let
\begin{equation*}
\phi(x) = x,\;\;\;\psi(x) = -dx+\frac{c(x)}{\lambda}.
\end{equation*}
By \cite[Lemma 4.9.5]{ethier2009markov}, it is easy to check that
\begin{equation*}
\phi(X_V(t))-\int_0^t\psi(X_V(s))ds
\end{equation*}
is a supermartingale whenever $\Enum\phi(X_V(0))<\infty$. This fact, together with \cite[Lemma 4.9.13]{ethier2009markov}, shows that $\{\pi_V\}$ is relatively compact. Since the martingale problems for $\mathcal{A}_V$ and $\mathcal{A}$ are both well posed and since $X_V\Rightarrow X$ as $V\rightarrow \infty$, it follows from \cite[Theorem 4.9.12]{ethier2009markov} that the weak limit of any weakly convergent subsequence of $\{\pi_V\}$ must be a stationary distribution of $X$. Since the stationary distribution of $X$ is unique, all weakly convergent subsequences of $\{\pi_V\}$ must converge weakly to the same limit, which gives the desired result.
\end{proof}

\section{Applications in bursty stochastic gene regulatory networks}
In this section, we propose a mesoscopic GDDMC model of stochastic gene regulatory networks with bursting dynamics and then apply our limit theorem to discuss its limit behavior. Gene regulatory networks can be tremendously complex, involving numerous feedback loops and signaling steps. A schematic diagram of a gene regulatory network is depicted in Fig. \ref{network}(a), where each node represents a gene and each edge represents a feedback relation. A gene regulatory network is usually a directed graph with two types of arrows depicted in Fig. \ref{network}(b), which represent the regulation of an output gene by an input gene via positive or negative feedback. In addition, we also allow a gene to regulate itself via positive or negative autoregulation, as depicted in Fig. \ref{network}(c).
\begin{figure}[!htb]
\centerline{\includegraphics[width=0.8\textwidth]{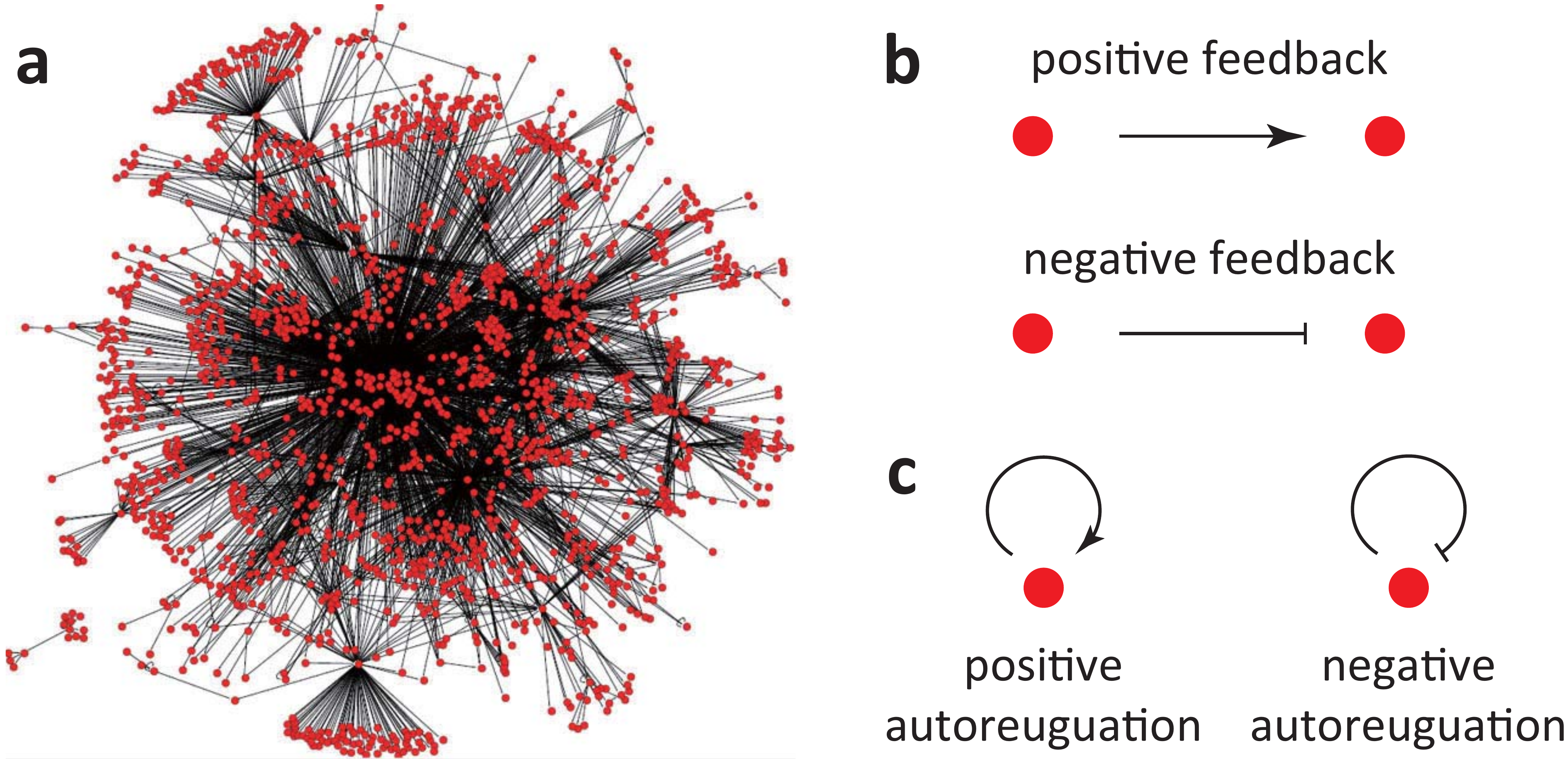}}
\caption{\textbf{Gene regulatory network in living cells.} (a) Schematic diagram of a gene regulatory network, where each red node represents a gene and each black edge represents a feedback relation. (b) Two types of feedback relations of an input gene on an output gene. (c) Positive and negative autoregulation of a gene on itself.}\label{network}
\end{figure}

We then focus on the single-cell gene expression kinetics of a bursty stochastic gene regulatory network. Suppose that the network is composed of $d$ different genes whose gene products are denoted by $P_1,P_2,\cdots,P_d$. For each $1\leq i\leq d$, let $N_i(t)$ denoted the copy number of the protein $P_i$ in an individual cell at time $t$ and let
\begin{equation*}
N(t) = (N_1(t),N_2(t),\cdots,N_d(t))
\end{equation*}
denote the copy number process. Then the concentration process $X_V(t) = N(t)/V$ can be modeled as a $d$-dimensional GDDMC on the lattice
\begin{equation*}
E_V = \set{\tfrac{n}{V}:n=(n_1,n_2,\ldots,n_d)\in\Nnum^d},
\end{equation*}
where $V$ is a scaling parameter. For each $1\leq i\leq d$, let $e_i = (0,\cdots,0,1,0,\cdots,0)$ denote the vector whose $i$th component is 1 and the other components are all zero. Each protein $P_i$ can be synthesized or degraded. The degradation of $P_i$ corresponds to a transition of $X_V$ from $n/V$ to $(n-e_i)/V$ with transition rate
\begin{equation*}
q\left(\tfrac{n}{V},\tfrac{n-e_i}{V}\right) = r_in_i,
\end{equation*}
where $r_i$ is the degradation rate of $P_i$. The synthesis of $P_i$ could occur in random bursts. The synthesis of $P_i$ corresponds to a transition of $X_V$ from $n/V$ to $(n+me_i)/V$ with transition rate
\begin{equation*}
q\left(\tfrac{n}{V},\tfrac{n+me_i}{V}\right) = c_i\left(\tfrac{n}{V}\right)p_i(V,m),\;\;\;m\geq 1,
\end{equation*}
where $c_i(n/V)$ is the effective transcription rate of gene $i$ and $p_i(V,\cdot)$ is the probability distribution of the burst size of $P_i$, as explained in Section 4. The transcription rate of each gene is affected by other genes according to the topology of the gene regulatory network. For each $1\leq i\leq d$, let $E_i$ denote the set of genes that positively regulate gene $i$ and let $I_i$ denote the set of genes that negatively regulate gene $i$. Then the effective transcription rate of gene $i$ is assumed to be governed by the function
\begin{equation*}
c_i(x) = \frac{s_i+\sum_{j\in E_i}x_j^{\mu_{ji}}}
{1+\sum_{j\in E_i}x_j^{\mu_{ji}}+\sum_{j\in I_i}x_j^{\nu_{ji}}},
\end{equation*}
where $s_i$ is a basal transcription rate and the other terms characterize the effects that other genes exert on gene $i$ \cite{rice2004reconstructing}. This influence can be excitatory or inhibitory. The influence of an excitatory gene $j\in E_i$ on gene $i$ is incorporated via the Hill-like coefficient $\mu_{ji}>0$. Similarly, the influence of an inhibitory gene $j\in I_i$ on gene $i$ is incorporated via the Hill-like coefficient $\nu_{ji}>0$. These Hill-like coefficients control the nonlinear dependence of output nodes on input nodes.

Two special burst-size distributions deserve special attention. If
\begin{equation}\label{geometric}
p_i(V,m) = p_i(V)^m(1-p_i(V)),
\end{equation}
then the the burst size of $P_i$ is geometrically distributed, as discussed in Section 4, and we assume that the mean burst size scales with the parameter $V$ as
\begin{equation}\label{scaling}
\frac{p_i(V)}{1-p_i(V)} = \frac{V}{\lambda_i}.
\end{equation}
In recent years, however, there has been evidence showing that the burst size may not be geometrically distributed in eukaryotic cells \cite{elgart2011connecting, schwabe2012transcription, kuwahara2015beyond}. In particular, a molecular ratchet model of gene expression \cite{schwabe2012transcription} predicts a peaked burst-size distribution that resembles the negative binomial distribution
\begin{equation}\label{NB}
p_i(V,m) = \frac{(\alpha_i)_n}{n!}p_i(V)^n(1-p_i(V))^{\alpha_i},
\end{equation}
where $\alpha_i>0$ is a constant. When $\alpha_i = 1$, the negative binomial distribution reduces to the geometric distribution \eqref{geometric}. Burst-size distributions under more complicated biochemical mechanisms can be found in \cite{schwabe2012transcription}. Using the Laplace transform, it is not hard to verify that under the scaling relation \eqref{scaling}, the negative binomial distribution \eqref{NB} converges weakly to the gamma distribution
\begin{equation*}
\mu_i(dx) = \frac{\lambda_i^{\alpha_i}}{\Gamma(\alpha_i)}x^{\alpha_i-1}e^{-\lambda_ix}dx
\end{equation*}
as $V\rightarrow\infty$ and the three conditions listed in \eqref{conditions} are satisfied with the condition (c) being relaxed as discussed in Remark \ref{relax}. If $\alpha_i$ is an integer, then the gamma distribution reduces to an Erlang distribution. This is also consistent with recent studies which used Erlang distributed burst sizes to model molecular memory \cite{qiu2019analytical}.

Under the above framework, the GDDMC model $X_V$ of a bursty stochastic gene regulatory network is associated with the operator
\begin{equation*}
\mathcal{A}_Vf\left(\tfrac{n}{V}\right)
= \sum_{i=1}^dr_in_i\left[f\left(\tfrac{n-e_i}{V}\right)-f\left(\tfrac{n}{V}\right)\right]
+\sum_{i=1}^dc_i\left(\tfrac{n}{V}\right)\sum_{m=1}^{\infty}p_i(V,m)
\left[f\left(\tfrac{n+me_i}{V}\right)-f\left(\tfrac{n}{V}\right)\right].
\end{equation*}
According to our theory, the limit process of $X_V$ is a PDMP $X = \{X(t):t\geq 0\}$ associated with the operator
\begin{equation*}
\mathcal{A}f(x) = -\sum_{i=1}^dr_ix_i\partial_if(x)
+\sum_{i=1}^dc_i(x)\int_0^{\infty}[f(x+ye_i)-f(x)]\mu_i(dy).
\end{equation*}
In particular, if $p_i(V,\cdot)$ is geometrically distributed, then $\mu_i$ is exponentially distributed. If $p_i(V,\cdot)$ is negative binomially distributed, then $\mu_i$ is gamma distributed. In previous works, many authors added independent white noises to the mean field dynamics of a gene regulatory network \cite{rice2004reconstructing}. Compared with these studies, our PDMP model provides a clearer description of the source of stochasticity involved in the network.

The limit behavior of the concentration process $X_V$ is stated rigorously in the following theorem.

\begin{theorem}
Suppose that the three conditions in \eqref{conditions} are satisfied. Let $\nu_{V}$ be the initial distribution of the GDDMC model $X_V$ of a stochastic gene regulatory network and let $\nu$ be the initial distribution of the PDMP model $X$. If $\nu_V\Rightarrow\nu$ as $V\rightarrow\infty$, then $X_V\Rightarrow X$ in $D(\Rnum_+,\Rnum_+^d)$ as $V\rightarrow\infty$.
\end{theorem}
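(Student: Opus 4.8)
The plan is to recognize the gene regulatory network model as a particular instance of the abstract GDDMC framework of Section~\ref{secmodel} and then to invoke Theorem~\ref{convergence} directly; the substance of the proof is simply checking that all of its hypotheses hold.

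First I would match the two parts of the generator. The degradation terms constitute the reaction part: comparing with \eqref{reaction}, a transition from $n/V$ to $(n-e_i)/V$ at rate $r_in_i$ corresponds to $m=-e_i$ with $\beta_{-e_i}(x)=r_ix_i$, while $\beta_m\equiv0$ for every $m$ outside $\{-e_1,\dots,-e_d\}$. In particular $\beta_m$ is nonzero for only finitely many $m$, which is exactly the standing hypothesis of Theorem~\ref{convergence}, and $\sum_{m\neq0}|m|\beta_m(x)=\sum_{i=1}^dr_ix_i<\infty$ for every $x\in\Rnum_+^d$. The associated vector field is $F(x)=\sum_m m\beta_m(x)=-(r_1x_1,\dots,r_dx_d)$, which is linear and hence Lipschitz with constant $\max_i r_i$; moreover $F_i(x)=-r_ix_i=0$ whenever $x_i=0$, so $F$ does not push the flow out of the first orthant, as required for the construction of the flow $\phi$ in \eqref{flow}.

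Next I would match the bursting part. The synthesis of $P_i$ in bursts along the direction $e_i$ corresponds to the $i$th term of \eqref{bursting}, with transcription rate $c_i$ and burst-size law $p_i(V,\cdot)$. The three conditions in \eqref{conditions} are assumed in the statement, so they need no further checking; for the concrete geometric and negative-binomial choices one recovers the exponential and gamma limits as already discussed. The one point that needs care is that each $p_i(V,\cdot)$ charges only the coordinate axis spanned by $e_i$, so the limiting $\mu_i$ is supported on a one-dimensional hyperplane rather than on a full-dimensional region, and condition~(c) in its original $V^d$-scaled form would then fail. This is precisely the degeneracy addressed by Remark~\ref{relax}: because the burst measure is concentrated on a $\tilde d$-dimensional hyperplane with $\tilde d=1<d$, condition~(c) is to be read with $d$ replaced by $\tilde d=1$, which is exactly what the hypothesis supplies in the present setting.

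With the reaction and bursting parts identified and all structural hypotheses verified, the operator $\mathcal{A}_V$ of this section coincides with the abstract operator of Section~\ref{secresult} and its PDMP limit is governed by the stated $\mathcal{A}$. Since $\nu_V\Rightarrow\nu$ by assumption, Theorem~\ref{convergence} yields $X_V\Rightarrow X$ in $D(\Rnum_+,\Rnum_+^d)$, which is the claim. The only genuinely delicate point, and the one I would scrutinize most, is the Lipschitz regularity required of the transcription rates $c_i$: the Hill-type expression is bounded and smooth away from the boundary, but when a Hill-like coefficient $\mu_{ji}$ or $\nu_{ji}$ is strictly less than one the map $x_j\mapsto x_j^{\mu_{ji}}$ has unbounded slope near $x_j=0$, so the Lipschitz hypothesis on $c_i$ built into the model must be understood as a constraint on the admissible coefficients rather than an automatic consequence of the formula.
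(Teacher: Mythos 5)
Your proposal is correct and follows essentially the same route as the paper, which states this theorem without a separate proof precisely because—once the degradation terms are identified as the reaction part (with $\beta_{-e_i}(x)=r_ix_i$ the only nonzero $\beta_m$, hence finitely many) and the synthesis terms as the bursting part with condition (c) of \eqref{conditions} read in the relaxed sense of Remark \ref{relax}—it is an immediate application of Theorem \ref{convergence}, mirroring the explicit verification carried out for the single-gene case in Section 4. Your two added observations, that the axis-supported burst measures force the $\tilde{d}=1$ reading of condition (c) and that the Lipschitz hypothesis on the Hill-type rates $c_i$ is a genuine restriction on the admissible exponents rather than automatic, are exactly the points the paper leaves implicit.
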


\section{Proof of Theorems \ref{thmwellposed} and \ref{MC}}
In this section, we shall prove that $X$ and $X_V$ are the unique solutions to the martingale problems for $\mathcal{A}$ and $\mathcal{A}_V$, respectively. Before doing these, we introduce some notation. Let $S$ be a metric space and let $S^\Delta$ be the one-point compactification of $S$. Let $\mathcal{R}$ be a linear operator on $C_0(S)$. Then $\mathcal{R}$ can be extended to a linear operator $\mathcal{R}^\Delta$ on $C(S^\Delta)$ with domain
\begin{equation*}
\mathcal{D}(\mathcal{R}^\Delta) = \set{f\in C(S^\Delta):(f-f(\Delta))|_{S}\in\mathcal{D}(\mathcal{R})}
\end{equation*}
defined by
\begin{equation*}
(\mathcal{R}^\Delta f)|_{S} = \mathcal{R}(f-f(\Delta))|_{S},\;\;\;\mathcal{R}^\Delta f(\Delta) = 0.
\end{equation*}

We shall first prove that $X$ is a solution to the martingale problem for $\mathcal{A}$. To this end, we need the following lemmas.

\begin{lemma}\label{SLLN}
For each $n\geq 1$, let $Z_n$ be the $n$th jump vector of $X$ as defined in \eqref{jumpvector}. Then
\begin{equation*}
\lim_{n\rightarrow\infty}Z_1+\cdots+Z_n = \infty,\;\;\;\textrm{a.s.}
\end{equation*}
\end{lemma}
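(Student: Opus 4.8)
The plan is to prove the stronger quantitative statement that $\sum_{n=1}^\infty |Z_n| = \infty$ almost surely. Since each jump vector $Z_n$ takes values in $\Rnum_+^d$, every $Z_n$ is a nonnegative vector, so the partial sums $S_n = Z_1+\cdots+Z_n$ are coordinatewise nondecreasing and
\begin{equation*}
|S_n|_1 = \sum_{k=1}^n |Z_k|_1 \ge \sum_{k=1}^n |Z_k|.
\end{equation*}
By equivalence of norms on $\Rnum^d$, divergence of $\sum_n |Z_n|$ then forces $|S_n|\to\infty$, i.e. $S_n\to\infty$ in the one-point compactification of $\Rnum_+^d$, which is exactly the claim.

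First I would extract a lower bound on the jump sizes that is uniform over the finite family $\set{\mu_i}$. Because each $\mu_i$ is a Borel probability measure on $\Rnum_+^d-\{0\}$, it assigns no mass to the origin, so $\mu_i(\set{x:|x|\ge\epsilon})\uparrow 1$ as $\epsilon\downarrow 0$. Hence for each $i$ there is $\epsilon_i>0$ with $\mu_i(\set{|x|\ge\epsilon_i})\ge \tfrac12$, and since $N$ is finite I may take $\epsilon = \min_{1\le i\le N}\epsilon_i>0$, so that $\mu_i(\set{|x|\ge\epsilon})\ge\tfrac12$ simultaneously for every $i$.

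Next, let $\mathcal{F}_n$ be the $\sigma$-algebra generated by the first $n$ jump times and jump vectors. By the construction \eqref{jumpvector}, conditionally on $\mathcal{F}_n$ the vector $Z_{n+1}$ is distributed as the mixture $\sum_{i=1}^N \tilde c_i(\phi(T_{n+1}-T_n,X(T_n)))\,\mu_i$; since jumps occur only where the intensity $c$ is positive, the weights are nonnegative and sum to one there, so this is a convex combination of the $\mu_i$ (the convention $0/0=1$ is therefore irrelevant). The previous step then gives, uniformly in $n$ and along every trajectory,
\begin{equation*}
\Pnum\big(|Z_{n+1}|\ge\epsilon \mid \mathcal{F}_n\big) = \sum_{i=1}^N \tilde c_i(\cdot)\,\mu_i(\set{|x|\ge\epsilon}) \ge \tfrac12.
\end{equation*}
In particular $\sum_{n\ge1}\Pnum(|Z_n|\ge\epsilon\mid\mathcal{F}_{n-1}) = \infty$ almost surely, so the conditional (second) Borel--Cantelli lemma of L\'evy yields $\Pnum(|Z_n|\ge\epsilon \text{ infinitely often}) = 1$. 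On this full-probability event $\sum_n |Z_n| \ge \epsilon\cdot\#\set{n:|Z_n|\ge\epsilon} = \infty$, which finishes the proof. Equivalently, one may stochastically dominate the indicators $\mathbf{1}\set{|Z_n|\ge\epsilon}$ from below by an i.i.d.\ Bernoulli$(\tfrac12)$ sequence and invoke the strong law of large numbers, which is the source of the lemma's name.

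I expect the main obstacle to be purely in the conditioning bookkeeping rather than in any hard estimate: I must set up the natural filtration of the embedded jump chain so that $Z_{n+1}$ is genuinely $\mathcal{F}_n$-conditionally distributed as in \eqref{jumpvector}, and keep the crude bound $\tfrac12$ throughout so that the data-dependent weights $\tilde c_i$ can never erode the estimate. It is precisely because the bound is uniform that the conditional Borel--Cantelli (or the SLLN after domination) applies to the entire infinite sequence $(Z_n)_{n\ge1}$ furnished by the construction.
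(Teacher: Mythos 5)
Your proof is correct, but it takes a genuinely different route from the paper's. The paper argues via a coupling with an independent array: it observes that the law of each $Z_n$ is a convex combination of $\mu_1,\dots,\mu_N$, introduces independent random vectors $X_{mn}\sim\mu_m$ together with random selection indices $T_n\in\{1,\dots,N\}$ so that $Z_n$ agrees in distribution with $X_{T_n,n}$, and then invokes the strong law of large numbers for the array. You instead extract a single uniform estimate, $\mu_i\left(\set{|x|\geq\epsilon}\right)\geq\tfrac12$ for all $i$ (possible because $N$ is finite and no $\mu_i$ charges the origin), and run the conditional second Borel--Cantelli lemma along the jump filtration to get $|Z_n|\geq\epsilon$ infinitely often a.s., whence the coordinatewise-nondecreasing partial sums diverge. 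What your route buys: it handles the history-dependence of the mixture weights $\tilde c_i$ honestly through conditioning, whereas the paper's claim that $Z_n$ and $X_{T_n,n}$ ``have the same distribution'' is stated only termwise---for an almost-sure conclusion about the sum one needs the coupling at the level of the whole sequence, a point the paper glosses over; your argument also uses only $\mu_i(\{0\})=0$ and never the finite-mean hypothesis. The paper's route is shorter and makes the lemma's name literal. One small imprecision on your side: $T_{n+1}$ is not $\mathcal{F}_n$-measurable, so your displayed identity for $\Pnum\left(|Z_{n+1}|\geq\epsilon\mid\mathcal{F}_n\right)$ should really be an average over $T_{n+1}$ via the tower property; since the pointwise bound $\geq\tfrac12$ is uniform in the weights, the inequality---which is all you use---survives, and you flag this bookkeeping issue yourself.
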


\begin{proof}
By the construction of the jump vectors, it is easy to see that the distribution of each $Z_n$ is a convex combination of $\mu_1,\cdots,\mu_N$. Let $\{X_{mn}:1\leq m\leq N,n\geq 1\}$ be an independent random array such that $X_{mn}$ has the distribution $\mu_m$. Then for each $n\geq 1$, there must exists a random variable $T_n$ with values in $\{1,2,\cdots,N\}$ such that $Z_n$ and $X_{T_n,n}$ has the same distribution. Since $1\leq T_n\leq N$, it follows from the strong law of large numbers that
\begin{equation*}
\lim_{n\rightarrow\infty}X_{T_1,1}+\cdots+X_{T_n,n} = \infty,\;\;\;\textrm{a.s.}
\end{equation*}
This gives the desired result.
\end{proof}

\begin{lemma}\label{lemclosure}
$X$ is a solution to the martingale problem for $\mathcal{A}^\Delta$.
\end{lemma}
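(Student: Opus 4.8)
The plan is to verify directly that for every $f\in\mathcal{D}(\mathcal{A}^\Delta)$ the process
$M_f(t)=f(X(t))-f(X(0))-\int_0^t\mathcal{A}^\Delta f(X(s))\,ds$ is a martingale, exploiting the explicit construction of $X$ through the flow $\phi$ and the jump mechanism \eqref{jumpvector}--\eqref{induction}. First I would reduce to compactly supported functions. Writing $g=(f-f(\Delta))|_{\Rnum_+^d}$, we have $g\in C_c^1(\Rnum_+^d)$, $\mathcal{A}^\Delta f=\mathcal{A}g$ on $\Rnum_+^d$ and $\mathcal{A}^\Delta f(\Delta)=0$, so $M_f=M_g$ and it suffices to treat $g$. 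A useful structural remark is that, because every jump $Z_n$ lies in $\Rnum_+^d$ and $g$ is supported in some box $[0,M]^d$, the function $\mathcal{A}g$ is itself compactly supported: if $x$ has a coordinate exceeding $M$ then $g(x)=0$ and $g(x+y)=0$ for every $y\in\Rnum_+^d$, so the jump integral vanishes. Hence both $g$ and $\mathcal{A}g$ are bounded, which removes any integrability concern in $\int_0^t\mathcal{A}g(X(s))\,ds$.

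Next comes the pathwise decomposition. On each inter-jump interval $[T_n,T_{n+1})$ the trajectory follows the flow, so $\frac{d}{dt}g(X(t))=\sum_iF_i(X(t))\partial_ig(X(t))$, while at each jump time the increment is $g(X(T_k))-g(X(T_k^-))$. Summing over $k$ gives, for $t<T_\infty$, the identity $g(X(t))-g(X(0))=\int_0^t\sum_iF_i(X(s))\partial_ig(X(s))\,ds+\sum_{T_k\le t}[g(X(T_k))-g(X(T_k^-))]$. Comparing with the expression for $\mathcal{A}g$, the content of the lemma is that the jump sum minus the compensator $\int_0^t\sum_ic_i(X(s))\int_{\Rnum_+^d}[g(X(s)+y)-g(X(s))]\mu_i(dy)\,ds$ is a martingale.

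I would establish this for the process stopped at $T_n$ by induction on $n$. Conditionally on $\mathcal{F}_{T_n}$, the next inter-jump time has the survival function built from $c=\sum_ic_i$ and the mark $Z_{n+1}$ has law $\sum_i\tilde{c}_i\mu_i$; a direct computation of the conditional expectation $\Enum[\,\cdot\mid\mathcal{F}_{T_n}]$ over one inter-jump interval shows that the jump intensity $c(X(s))$ together with the jump law $\sum_i\tilde{c}_i(X(s))\mu_i$ produces exactly the compensator $\sum_ic_i(X(s))\int_{\Rnum_+^d}[g(X(s)+y)-g(X(s))]\mu_i(dy)$. This yields that $M_g(t\wedge T_n)$ is a martingale for each fixed $n$. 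A pleasant feature is that the stopped martingales are \emph{uniformly} bounded, $|M_g(t\wedge T_n)|\le 2\|g\|_\infty+t\|\mathcal{A}g\|_\infty$, so once $M_g(t\wedge T_n)\to M_g(t)$ almost surely the martingale property passes to the limit by bounded convergence.

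The main obstacle is precisely this limit $n\to\infty$, since under our hypotheses $\Enum N_t$ need not be finite and $X$ may explode. On $\{t<T_\infty\}$ the convergence is immediate because $t\wedge T_n=t$ for all large $n$. On the explosion event $\{T_\infty\le t\}$ the one-point compactification does the work: there $X(t)=\Delta$, $g(\Delta)=0$ and $\mathcal{A}^\Delta f(\Delta)=0$, so after $T_\infty$ nothing is accumulated, and it remains to show $g(X(T_n))\to 0$. This is where Lemma \ref{SLLN} enters: it guarantees $Z_1+\cdots+Z_n\to\infty$, and combined with the linear growth bound $|F(x)|\le|F(0)|+L_F|x|$ over the finite horizon $T_\infty$ (so that the accumulated drift $\int_0^{T_n}F(X(s))\,ds$ cannot offset a divergent jump sum), this forces $X(T_n)$ to escape the compact support of $g$, whence $g(X(T_n))\to 0=g(\Delta)$ by continuity of $f$ at $\Delta$. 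Dominated convergence then handles $\int_0^{t\wedge T_n}\mathcal{A}g(X(s))\,ds\to\int_0^t\mathcal{A}g(X(s))\,ds$, and the martingale property transfers from $M_g(t\wedge T_n)$ to $M_g(t)$. I expect the delicate points to be (i) justifying the pathwise decomposition and the one-step conditional computation carefully across a possibly infinite number of jumps, and (ii) making the escape-to-infinity argument on the explosion event precise enough to invoke continuity of $f$ at $\Delta$.
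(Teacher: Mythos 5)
Your overall architecture coincides with the paper's: establish the Dynkin/martingale identity up to each jump time $T_n$ by a one-step conditional computation and induction, note that everything in sight is bounded (since $\mathcal{A}g$ is compactly supported), and then pass to the limit $n\to\infty$, the whole difficulty being concentrated in showing that $X(T_n)\rightarrow\Delta$ almost surely on the explosion event $\{T_\infty\le t\}$, via Lemma \ref{SLLN}. Up to that point your proposal is sound and matches the paper's proof of \eqref{finite} and its bounded-convergence step.

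The gap is in your justification of the escape step. You argue that the linear growth bound $|F(x)|\le |F(0)|+L_F|x|$ "over the finite horizon $T_\infty$" prevents the accumulated drift $\int_0^{T_n}F(X(s))\,ds$ from offsetting the divergent jump sum. But on the explosion event the trajectory is precisely the object that becomes unbounded on $[0,T_\infty)$, so linear growth of $F$ gives no a priori bound on $\int_0^{T_n}|F(X(s))|\,ds$: the bound on the drift depends on the size of $X$, which depends on the jumps, which is what you are trying to control — the argument as stated is circular, and one cannot even rule out, without further work, that $X(T_n)$ oscillates (unbounded but with a bounded subsequence), in which case $g(X(T_n))$ would not converge. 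The paper breaks this circularity in two steps. First, applying Fatou's lemma to \eqref{finite} for a point-separating family of nonpositive functions $f_n\in\mathcal{D}(\mathcal{A}^\Delta)$ yields submartingales, and Doob's regularity theorem then shows that $X$ has left limits in the compact space $(\Rnum_+^d)^\Delta$; hence $X(T_n)$ actually converges, so either its limit is $\Delta$ or the whole sequence $\{X(T_n)\}$ is bounded by some $M$. Second, in the bounded case, Gronwall's inequality applied to each inter-jump flow increment gives $\sum_{k=1}^m|X(T_k-)-X(T_{k-1})|\le \sup_{0\le x\le M}|F(x)|\,T_\infty e^{L_FT_\infty}<\infty$ — note that the supremum of $|F|$ is taken only over the ball of radius $M$, which is exactly what the boundedness assumption buys — and this contradicts Lemma \ref{SLLN}. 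Your sketch could alternatively be repaired without the regularity step by a pathwise differential inequality: since $X\ge 0$ componentwise and jumps are nonnegative, $U(t)=\sum_iX_i(t)$ satisfies $\dot U\ge -a-bU$ between jumps with $a=\sqrt{d}|F(0)|$, $b=\sqrt{d}L_F$, whence $U(T_n)+a/b\ge e^{-bT_\infty}\sum_{k\le n}|Z_k|_1\rightarrow\infty$; but some argument of this multiplicative Gronwall type (or the paper's left-limit dichotomy) is genuinely needed, and it is missing from your proposal.
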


\begin{proof}
For any $f\in\mathcal{D}(\mathcal{A})$, it is easy to check that $\mathcal{A}f\in C_c(\Rnum_+^d)$. This shows that $\mathcal{A}$ is a linear operator on $C_0(\Rnum_+^d)$ and thus $\mathcal{A}^\Delta$ is a well defined linear operator on $C((\Rnum_+^d)^{\Delta})$. Without loss of generality, we assume that $X_0 = x\in\Rnum_+^d$. Let $\phi(t,x)$ be the global flow defined in \eqref{flow}. For any $f\in\mathcal{D}(\mathcal{A})$, we have
\begin{equation*}
\mathcal{A}f(\phi(t,x)) = \frac{d}{dt}f(\phi(t,x))
+\sum_{i=1}^Nc_i(\phi(t,x))\int_{\Rnum_+^d}[f(\phi(t,x)+y)-f(\phi(t,x))]\mu_i(dy).
\end{equation*}
For any $g\in C^1[0,\infty)$ with $g(0) = 0$ and $t\geq 0$, it is easy to check that
\begin{equation*}
\Enum g(t\wedge T_1) = \int_0^t g'(s)\Pnum(T_1>s)ds.
\end{equation*}
For each $m\geq 1$, let $T_m$ be the $m$th jump time of $X$ and let $Z_m$ be the $m$th jump vector of $X$. Applying the above two equations gives rise to
\begin{align*}
&\; \Enum f(\phi(t\wedge T_1,x))-f(x)-\Enum\int_0^{t\wedge T_1}\mathcal{A}f(\phi(s,x))ds \\
=&\; -\sum_{i=1}^N\Enum\int_0^{t\wedge T_1}c_i(\phi(s,x))
\int_{\Rnum_+^d}[f(\phi(s,x)+y)-f(\phi(s,x))]\mu_i(dy) \\
=&\; -\sum_{i=1}^N\int_0^tc_i(\phi(s,x))e^{-\int_0^sc(\phi(u,x))du}
\int_{\Rnum_+^d}[f(\phi(s,x)+y)-f(\phi(s,x))]\mu_i(dy).
\end{align*}
Since the trajectory of $X$ coincides with that of $\phi(t,x)$ before $T_1$, we have
\begin{align*}
&\; \Enum f(X(t\wedge T_1))-\Enum f(\phi(t\wedge T_1,x)) \\
=&\; \Enum[f(\phi(T_1,x)+Z_1)-f(\phi(T_1,x))]I_{\set{T_1\leq t}}\\
=&\; \int_0^t\Pnum(T_1\in ds)\Enum[f(\phi(s,x)+Z_1)-f(\phi(s,x))] \\
=&\; \sum_{i=1}^N\int_0^tc_i(\phi(s,x))e^{-\int_0^sc(\phi(u,x))du}
\int_{\Rnum_+^d}[f(\phi(s,x)+y)-f(\phi(s,x))]\mu_i(dy).
\end{align*}
Adding the above two equations gives rise to
\begin{equation*}
\Enum f(X(t\wedge T_1))-f(x) = \Enum\int_0^{t\wedge T_1}\mathcal{A}f(X(s))ds.
\end{equation*}
By induction and the construction of the PDMP limit, it is not difficult to prove that
\begin{equation}\label{finite}
\Enum f(X(t\wedge T_m))-f(x) = \Enum\int_0^{t\wedge T_m}\mathcal{A}f(X(s))ds,\;\;\;
\textrm{for any}\; m\geq 1.
\end{equation}
To proceed, we select a sequence $\{f_n:n\geq 1\}\subset\mathcal{D}(\mathcal{A}^{\Delta})$ such that $f_n\leq 0$, $f_n(\Delta) = 0$, and $\{f_n\}$ separates points in $(\Rnum_+^d)^{\Delta}$, which means that for any $x,y\in(\Rnum_+^d)^{\Delta}$ and $x\neq y$, there exists $n\geq 1$ such that $f_n(x)\neq f_n(y)$. Taking $m\rightarrow \infty$ in \eqref{finite} and applying Fatou's lemma, we obtain that
\begin{align*}
\Enum f_n(X(t))-f(x) &\geq \Enum\limsup_{m\rightarrow\infty}f_n(X(t\wedge T_m))-f(x) \\
&\geq \Enum\int_0^{t\wedge T_\infty}\mathcal{A}^\Delta f_n(X(s))ds
= \Enum\int_0^t\mathcal{A}^\Delta f_n(X(s))ds.
\end{align*}
This fact, together with the Markov property of $X$, shows that
\begin{equation}\label{supermtgl}
f_n(X(t))-f_n(X(0))-\int_0^t\mathcal{A}^\Delta f_n(X(s))ds
\end{equation}
is a submartingale for each $n$. Doob's regularity theorem \cite[Theorem 65.1]{rogers2000diffusions} claims that a right-continuous submartingale must be c\`{a}dl\`{a}g almost surely. Thus, the process $f_n(X)$ must have left limits for each $n$. Since $\{f_n\}$ separates points in $(\Rnum_+^d)^{\Delta}$, the process $X$ must also have left limits. We next claim that for any $t\geq 0$,
\begin{equation}\label{infinity}
\lim_{m\rightarrow\infty}X(t\wedge T_m) = X(t),\;\;\;\textrm{a.s.}
\end{equation}
This equality is obvious when $t<T_\infty$. We next consider the case of $T_\infty\leq t$. In this case, we only need to prove that
\begin{equation*}
\lim_{m\rightarrow\infty}X(T_m) = \Delta,\;\;\;\textrm{a.s.}
\end{equation*}
If this is false, then there is a positive probability such that $\{X(T_m)\}$ is a bounded sequence. Suppose that $|X(T_m)|\leq M$ for any $m\geq 1$. It is worth noting that
\begin{equation*}
X(T_m) = X(0)+\sum_{k=1}^m[X(T_k-)-X(T_{k-1})]+\sum_{k=1}^mZ_k,
\end{equation*}
where $T_0 = 0$. Since $F$ is Lipschitz, we have
\begin{equation*}
|\phi(t,x)-x| \leq \int_0^t|F(\phi(s,x))|ds \leq |F(x)|t+L_F\int_0^t|\phi(s,x)-x|ds.
\end{equation*}
By Gronwall's inequality, we have
\begin{align*}
\sum_{k=1}^m|X(T_k-)-X(T_{k-1})| &= \sum_{k=1}^m|\phi(T_k-T_{k-1},X(T_{k-1}))-X(T_{k-1})|\\
&\leq \sum_{k=1}^m\sup_{0\leq x\leq M}|F(x)|e^{L_FT_\infty}(T_k-T_{k-1}) \\
&\leq \sup_{0\leq x\leq M}|F(x)|T_\infty e^{L_FT_\infty} < \infty.
\end{align*}
This fact, together with Lemma \ref{SLLN}, shows that $X_{T_m}\rightarrow\Delta$, which leads to a contradiction. Thus we have proved \eqref{infinity}. Taking $m\rightarrow\infty$ in \eqref{finite} and applying the dominated convergence theorem, we obtain that for any $f\in\mathcal{D}(\mathcal{A}^\Delta)$,
\begin{equation*}
\Enum f(X(t))-f(x) = \Enum\int_0^t\mathcal{A}^\Delta f(X(s))ds.
\end{equation*}
This fact, together with the Markov property of $X$, shows that
\begin{equation}\label{supermtgl}
f(X(t))-f(X(0))-\int_0^t\mathcal{A}^\Delta f(X(s))ds
\end{equation}
is indeed a martingale, which gives the desired result.
\end{proof}

To proceed, we recall the following important concept \cite[Section 3.4]{ethier2009markov}.

\begin{definition}
Let $S$ be a metric space and let $\{f_n\}$ be a sequence in $B(S)$. We say that $\{f_n\}$ \emph{converges boundedly and pointwise} or \emph{bp-converges} to $f\in B(S)$ if $\{f_n\}$ is uniformly bounded and $f_n(x)\rightarrow f(x)$ for each $x\in S$. A set $M\subset B(S)$ is called \emph{bp-closed} if whenever $\{f_n\}\subset M$ and $\{f_n\}$ bp-converges to $f$, we have $f\in M$. The \emph{bp-closure} of $M$ is defined as the smallest bp-closed subset of $B(S)$ that contains $M$.
\end{definition}

We still need the following lemma, whose proof can be found in \cite[Theorem 4.3.8]{ethier2009markov}.

\begin{lemma}\label{restriction}
Let $S$ be a metric space and let $U$ be an open subset of $S$. Let $\mathcal{R}$ be an operator on $B(S)$ with domain $\mathcal{D}(\mathcal{R})\subset C_b(S)$ and graph $\mathcal{G}(\mathcal{R})$. Suppose that $Y$ is a solution to the martingale problem for $\mathcal{R}$. If $\Pnum(Y_0\in U) = 1$ and $(I_U,0)$ is in the bp-closure of $\mathcal{G}(\mathcal{R})$, then $\Pnum(Y\in D(\Rnum_+,U)) = 1$.
\end{lemma}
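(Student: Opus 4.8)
The plan is to use the bp-closure hypothesis to produce an approximating sequence inside $\mathcal{D}(\mathcal{R})$, feed it into the martingale problem together with the optional sampling theorem applied at the first exit time from $U$, and then exploit the c\`{a}dl\`{a}g structure of $Y$ to show that this exit time is almost surely infinite.

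First I would invoke the hypothesis that $(I_U,0)$ lies in the bp-closure of $\mathcal{G}(\mathcal{R})$ to select a sequence $\{f_n\}\subset\mathcal{D}(\mathcal{R})$ with $f_n\to I_U$ and $\mathcal{R}f_n\to 0$ in the bp sense; in particular the bound $C:=\sup_n(\|f_n\|_\infty+\|\mathcal{R}f_n\|_\infty)$ is finite. Since $\mathcal{D}(\mathcal{R})\subset C_b(S)$ and $Y$ solves the martingale problem for $\mathcal{R}$, for each $n$ the process
\begin{equation*}
M_n(t)=f_n(Y(t))-f_n(Y(0))-\int_0^t\mathcal{R}f_n(Y(s))\,ds
\end{equation*}
is a c\`{a}dl\`{a}g martingale with respect to the natural filtration of $Y$, hence also with respect to its usual augmentation.

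Next I would introduce the first exit time $\tau=\inf\{t\geq 0:Y(t)\in S\setminus U\}$. Because $S\setminus U$ is closed and $Y$ is progressively measurable, $\tau$ is a stopping time for the augmented filtration by the d\'{e}but theorem. Applying optional sampling to $M_n$ at the bounded stopping time $\tau\wedge t$ gives
\begin{equation*}
\Enum f_n(Y(\tau\wedge t))=\Enum f_n(Y(0))+\Enum\int_0^{\tau\wedge t}\mathcal{R}f_n(Y(s))\,ds.
\end{equation*}
Letting $n\to\infty$, the uniform bound $C$ permits bounded and dominated convergence on all three terms: the right-hand integral tends to $0$ since $\mathcal{R}f_n\to 0$ pointwise and is dominated by $C$ on $[0,t]$, while $\Enum f_n(Y(0))\to\Pnum(Y(0)\in U)=1$. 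I therefore obtain $\Pnum(Y(\tau\wedge t)\in U)=1$ for every $t\geq 0$.

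The main obstacle is upgrading such a statement to the pathwise conclusion: sampling at a deterministic time $t$ would only yield $\Pnum(Y(t)\in U)=1$ for each fixed $t$, which merely forces $Y$ to lie in $U$ off a Lebesgue-null set of times and in $\overline{U}$ at all times, and this is strictly weaker than what is claimed. This is exactly why I stop at $\tau\wedge t$ rather than at $t$. The decisive observation closing the gap is that whenever $\tau<\infty$ one necessarily has $Y(\tau)\notin U$: if instead $Y(\tau)\in U$, then since $U$ is open and $Y$ is right-continuous, $Y$ would remain in $U$ throughout some interval $[\tau,\tau+\varepsilon)$, forcing every exit time to be at least $\tau+\varepsilon$ and contradicting the definition of $\tau$ as an infimum. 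Consequently, on $\{\tau\leq t\}$ we have $Y(\tau\wedge t)=Y(\tau)\notin U$, so $\Pnum(\tau\leq t)\leq\Pnum(Y(\tau\wedge t)\notin U)=0$. Letting $t\to\infty$ gives $\tau=\infty$ almost surely, which means $Y(t)\in U$ for all $t\geq 0$, that is, $\Pnum(Y\in D(\Rnum_+,U))=1$, as desired.
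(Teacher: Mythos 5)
Your overall strategy (optional sampling of the approximating martingales at the first exit time) is the right skeleton, and your steps from the identity $\Pnum(Y(\tau\wedge t)\in U)=1$ down to ``$\tau=\infty$ a.s.'' are correct. But the proof as written has two genuine gaps. The first is the opening move: membership of $(I_U,0)$ in the bp-closure of $\mathcal{G}(\mathcal{R})$ does \emph{not} supply a sequence $\{f_n\}\subset\mathcal{D}(\mathcal{R})$ with $(f_n,\mathcal{R}f_n)\rightarrow(I_U,0)$ in the bp sense. The bp-closure is defined as the \emph{smallest bp-closed set} containing $\mathcal{G}(\mathcal{R})$, and this is in general strictly larger than the set of bp-limits of sequences taken from $\mathcal{G}(\mathcal{R})$: that set of one-step limits need not itself be bp-closed, so the closure is produced by iterating the limit operation (as with Baire classes), and an element of it may admit no approximating sequence inside $\mathcal{G}(\mathcal{R})$ at all. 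The standard repair keeps all of your estimates but inverts the logic: fix $t$, let $\tau$ be your stopping time, and set
\begin{equation*}
H=\Big\{(f,g)\in B(S)\times B(S):\;
\Enum f(Y(\tau\wedge t))=\Enum f(Y(0))+\Enum\int_0^{\tau\wedge t}g(Y(s))\,ds\Big\}.
\end{equation*}
By dominated convergence $H$ is bp-closed, and $\mathcal{G}(\mathcal{R})\subset H$ by exactly your optional sampling step (legitimate because for $f\in\mathcal{D}(\mathcal{R})\subset C_b(S)$ the associated martingale is c\`{a}dl\`{a}g). Hence $H$ contains the whole bp-closure, in particular $(I_U,0)$, which is precisely the identity $\Pnum(Y(\tau\wedge t)\in U)=1$ that the rest of your argument needs.

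The second gap is the last sentence: what you have proved is $\Pnum(Y(t)\in U\;\textrm{for all}\;t\geq 0)=1$, and this is strictly weaker than $\Pnum(Y\in D(\Rnum_+,U))=1$. A c\`{a}dl\`{a}g $S$-valued path all of whose \emph{values} lie in $U$ belongs to $D(\Rnum_+,U)$ only if all of its \emph{left limits} also lie in $U$: take $S=\Rnum$, $U=(0,2)$, $y(t)=1-t$ for $t<1$ and $y(t)=1$ for $t\geq 1$; every value is in $U$, but $y(1-)=0\notin U$, so $y$ has no left limit in the space $U$ and $y\notin D(\Rnum_+,U)$. Your exit time $\tau=\inf\{t:Y(t)\in S\setminus U\}$ is blind to such excursions of the left limits (it is infinite on the path just described), so no argument based on $\tau$ alone can rule them out. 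Closing this gap requires genuinely more work: one must use $\inf\{t:Y(t)\notin U\;\textrm{or}\;Y(t-)\notin U\}$, announce it from below by the stopping times $\tau_n=\inf\{t:\mathrm{dist}(Y(t),S\setminus U)<1/n\}$, exploit the continuity of the functions in $\mathcal{D}(\mathcal{R})$ to identify $\lim_n\Enum f(Y(\tau_n\wedge t))$ with the expectation of $f$ evaluated at the left limit, and then run a second bp-closure argument of the type above; this is the part of the cited Theorem 4.3.8 of Ethier and Kurtz that your proposal does not reach. (In this paper's applications the bp-hypothesis is in fact verified by exhibiting an explicit bp-convergent sequence, so a version of the lemma with your stronger sequential hypothesis would serve there; but that does not make the proof of the lemma as stated complete.)
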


The following lemma plays an important role in proving the nonexplosiveness of $X$.

\begin{lemma}\label{closure}
$(I_{\Rnum_+^d},0)$ is in the bp-closure of $\mathcal{G}(\mathcal{A}^\Delta)$.
\end{lemma}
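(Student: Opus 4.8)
The plan is to exhibit an explicit sequence $\{f_n\}\subset\mathcal{D}(\mathcal{A}^\Delta)$ whose graph elements $(f_n,\mathcal{A}^\Delta f_n)$ bp-converge to $(I_{\Rnum_+^d},0)$. Fix a smooth cutoff $h:[0,\infty)\to[0,1]$ with $h\equiv 1$ on $[0,1]$, $h\equiv 0$ on $[2,\infty)$, and $|h'|\le 2$, and set $f_n(x)=h(|x|/n)$ for $x\in\Rnum_+^d$ together with $f_n(\Delta)=0$. Because $h\equiv 1$ near the origin, $f_n$ is $C^1$ on $\Rnum_+^d$; it is supported in $\{|x|\le 2n\}$ and tends to $0$ as $|x|\to\infty$, so $f_n\in C((\Rnum_+^d)^\Delta)$ with $f_n|_S\in C_c^1(\Rnum_+^d)$, i.e. $f_n\in\mathcal{D}(\mathcal{A}^\Delta)$. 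Since $f_n(\Delta)=0$ we have $\mathcal{A}^\Delta f_n|_S=\mathcal{A}f_n$ and $\mathcal{A}^\Delta f_n(\Delta)=0$. Note that $f_n$ is globally Lipschitz with constant at most $2/n$, a fact I will use repeatedly.

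The convergence $f_n\to I_{\Rnum_+^d}$ bp is immediate: $0\le f_n\le 1$, $f_n(x)=1$ once $n>|x|$, and $f_n(\Delta)=0$. For $\mathcal{A}^\Delta f_n\to 0$ pointwise, the value at $\Delta$ is identically $0$; for fixed $x\in\Rnum_+^d$ and $n>|x|$ the drift term $\sum_iF_i(x)\partial_if_n(x)$ vanishes because $\nabla f_n(x)=0$ there, while in the jump term $f_n(x+y)-f_n(x)=f_n(x+y)-1\to 0$ for every $y$ and is bounded by $1$, so dominated convergence against the finite measure $\mu_i$ forces the integral to $0$.

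The crux is the uniform bound $\sup_n\|\mathcal{A}f_n\|_\infty<\infty$. The decisive structural fact is that each $\mu_i$ is supported on $\Rnum_+^d$, so every jump is coordinatewise nonnegative; hence for $x\in\Rnum_+^d$ and $y$ in the support of $\mu_i$ one has $|x+y|\ge|x|$. Consequently, if $|x|>2n$ then $f_n(x)=0$, $\nabla f_n(x)=0$, and $f_n(x+y)=0$ for all such $y$, so $\mathcal{A}f_n(x)=0$; the support of $\mathcal{A}f_n$ is confined to $\{|x|\le 2n\}$. On this set I would use the linear growth coming from the Lipschitz hypotheses, $|F(x)|\le|F(0)|+2L_Fn$ and $c_i(x)\le c_i(0)+2L_{c_i}n$, against the two small factors $|\nabla f_n(x)|\le 2/n$ and, via the Lipschitz bound on $f_n$, $\int_{\Rnum_+^d}|f_n(x+y)-f_n(x)|\mu_i(dy)\le\frac{2}{n}\int_{\Rnum_+^d}|y|\mu_i(dy)=\frac{2}{n}M_i$, where $M_i<\infty$ is the mean of $\mu_i$. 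Multiplying, both the drift and the jump contributions are bounded by constants of the form $C(1+1/n)$, independent of $x$ and $n$, which yields the desired uniform bound.

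I expect the uniform boundedness to be the only real difficulty, and its resolution rests squarely on two ingredients working together: the nonnegativity of the jumps, which annihilates $\mathcal{A}f_n$ outside $\{|x|\le 2n\}$ and thereby caps the linear growth of $F$ and $c_i$ at order $n$, and the $O(1/n)$ global Lipschitz constant of the cutoff combined with the finiteness of the means $M_i$, which tames the nonlocal term. Once this bound and the two convergences are in hand, $(I_{\Rnum_+^d},0)$ lies in the bp-closure of $\mathcal{G}(\mathcal{A}^\Delta)$ by definition, completing the proof.
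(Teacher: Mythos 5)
Your proof is correct and follows essentially the same route as the paper's: a radial cutoff $f_n$ equal to $1$ on $\{|x|\le n\}$ with $O(1/n)$ Lipschitz constant, the observation that nonnegativity of the jumps (each $\mu_i$ being supported on $\Rnum_+^d$) kills $\mathcal{A}f_n$ outside the support of $f_n$, the linear growth of $F$ and $c_i$ played against the $O(1/n)$ gradient and the finite means $\int|y|\mu_i(dy)$ for the uniform bound, and the vanishing of the drift plus a tail/dominated-convergence estimate on the jump term for the pointwise limit. The only differences (cutoff radius $2n$ versus $3n$, constant $2/n$ versus $1/n$, dominated convergence versus a direct bound by $\mu_i(\{|y|>n-|x|\})$) are cosmetic.
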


\begin{proof}
For each $n\geq 1$, there exists $g_n\in C_c^1(\Rnum_+)$ satisfying $0\leq g_n\leq 1$ and
\begin{equation}\label{uryson}
\begin{cases}
\;g_n(x) = 1, &0\leq x\leq n,\\
\;g_n(x) = 0, &x\geq 3n,\\
\;|g_n'(x)|<\frac{1}{n}, &x\geq 0.
\end{cases}
\end{equation}
Let $f_n$ be a function on $\Rnum_+^d$ defined by $f_n(x) = g_n(|x|)$. Then $f_n\in C_c^1(\Rnum_+^d)$ and $|\nabla f_n(x)| < 1/n$. Moreover, it is easy to check that $\mathcal{A}f_n(x) = 0$ for any $|x|\geq 3n$. Since $F$ and $c_i$ are Lipschitz functions, for any $x\in \Rnum_+^d$,
\begin{equation*}
|F(x)|\leq |F(0)|+L_F|x|,\;\;\;c_i(x)\leq c_i(0)+L_{c_i}|x|.
\end{equation*}
For any $|x|<3n$, it follows from the mean value theorem that
\begin{equation}\label{eqlem13}
\begin{split}
|\mathcal{A}f_n(x)| &\leq \frac{|F(x)|}{n}+\sum_{i=1}^N\frac{c_i(x)}{n}\int_{\Rnum_+^d}|y|\mu_i(dy)\\
&\leq |F(0)|+3L_F+\sum_{i=1}^N[c_i(0)+3L_{c_i}]\int_{\Rnum_+^d}|y|\mu_i(dy),
\end{split}
\end{equation}
which shows that $\{\mathcal{A}f_n\}$ is uniformly bounded. For any $x\in \Rnum_+^d$, whenever $n\geq |x|$, we have
\begin{equation*}
|\mathcal{A}f_n(x)| \leq \sum_{i=1}^Nc_i(x)\int_{|y|>n-|x|}|f_n(x+y)-f_n(x)|\mu_i(dy)
\leq \sum_{i=1}^Nc_i(x)\mu_i(\{y:|y|>n-|x|\}),
\end{equation*}
which tends to zero as $n\rightarrow\infty$. Thus, $(f_n,\mathcal{A}f_n)$ bp-converges to $(I_{\Rnum_+^d},0)$.
\end{proof}

\begin{lemma}\label{proexistence}
$X$ is a solution to the martingale problem for $\mathcal{A}$.
\end{lemma}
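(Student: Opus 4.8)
The plan is to show that $X$, the PDMP constructed in Section 2, is a genuine solution to the martingale problem for the operator $\mathcal{A}$ on its full domain $\mathcal{D}(\mathcal{A}) = C_c^1(\Rnum_+^d)$, rather than merely for the extended operator $\mathcal{A}^\Delta$ on the compactified space $(\Rnum_+^d)^\Delta$. The essential point is that $X$ never reaches the tomb state $\Delta$, i.e., $X$ is nonexplosive, so that the extension by $\Delta$ is harmless and the martingale property descends from $\mathcal{A}^\Delta$ to $\mathcal{A}$. I have already established by Lemma~\ref{lemclosure} that $X$ solves the martingale problem for $\mathcal{A}^\Delta$, and by Lemma~\ref{closure} that $(I_{\Rnum_+^d},0)$ lies in the bp-closure of the graph $\mathcal{G}(\mathcal{A}^\Delta)$. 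These two facts are exactly the hypotheses needed to invoke Lemma~\ref{restriction}.

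\emph{First} I would take $S = (\Rnum_+^d)^\Delta$ and $U = \Rnum_+^d$, which is an open subset of $S$ since $\Delta$ is the point at infinity. Since the initial distribution $\nu$ is supported in $\Rnum_+^d$, we have $\Pnum(X_0 \in U) = 1$. By Lemma~\ref{lemclosure}, $X$ is a solution to the martingale problem for $\mathcal{A}^\Delta$, whose domain consists of functions in $C(S)$ and hence lies in $C_b(S)$; by Lemma~\ref{closure}, $(I_U, 0) = (I_{\Rnum_+^d}, 0)$ is in the bp-closure of $\mathcal{G}(\mathcal{A}^\Delta)$. \emph{Then} Lemma~\ref{restriction} yields
\begin{equation*}
\Pnum\big(X \in D(\Rnum_+, \Rnum_+^d)\big) = 1,
\end{equation*}
which is precisely the statement that $X$ almost surely never reaches $\Delta$ on any finite time interval, i.e., $T_\infty = \infty$ almost surely and $X$ is nonexplosive.

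\emph{Finally}, with nonexplosiveness in hand, I would transfer the martingale property from $\mathcal{A}^\Delta$ back to $\mathcal{A}$. For any $f \in \mathcal{D}(\mathcal{A}) = C_c^1(\Rnum_+^d)$, define $\tilde{f} \in C(S)$ by $\tilde{f}|_{\Rnum_+^d} = f$ and $\tilde{f}(\Delta) = 0$; since $f$ has compact support it vanishes at infinity, so $\tilde{f}$ is continuous on the one-point compactification and $\tilde{f} \in \mathcal{D}(\mathcal{A}^\Delta)$ with $\mathcal{A}^\Delta \tilde{f}|_{\Rnum_+^d} = \mathcal{A}f$. Because $X$ stays in $\Rnum_+^d$ almost surely, we have $\tilde{f}(X(t)) = f(X(t))$ and $\mathcal{A}^\Delta \tilde{f}(X(s)) = \mathcal{A}f(X(s))$ for all $s,t \geq 0$ almost surely. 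Hence the $\mathcal{A}^\Delta$-martingale
\begin{equation*}
\tilde{f}(X(t)) - \tilde{f}(X(0)) - \int_0^t \mathcal{A}^\Delta \tilde{f}(X(s))\,ds
\end{equation*}
coincides almost surely with $f(X(t)) - f(X(0)) - \int_0^t \mathcal{A}f(X(s))\,ds$, which is therefore a martingale with respect to the natural filtration of $X$. This proves that $X$ is a solution to the martingale problem for $\mathcal{A}$.

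\emph{The main obstacle} is conceptual rather than computational, and it has effectively been discharged already: the classical PDMP theory of Davis presupposes $\Enum N_t < \infty$, which is \emph{not} assumed here, so nonexplosiveness cannot be taken for granted and must be derived. The real work lies in Lemma~\ref{closure}, where the uniform boundedness of $\{\mathcal{A}f_n\}$ relies crucially on the Lipschitz bounds $|F(x)| \leq |F(0)| + L_F|x|$ and $c_i(x) \leq c_i(0) + L_{c_i}|x|$ together with the cutoff gradient estimate $|\nabla f_n| < 1/n$, so that the linear growth of the coefficients is exactly compensated by the decay of the test-function gradients. Once Lemma~\ref{restriction} is applied, the descent from $\mathcal{A}^\Delta$ to $\mathcal{A}$ is a routine identification of the two martingales on the event $\{X \in D(\Rnum_+, \Rnum_+^d)\}$ of full probability.
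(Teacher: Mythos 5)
Your proposal is correct and follows essentially the same route as the paper: take $S = (\Rnum_+^d)^\Delta$, $U = \Rnum_+^d$, $\mathcal{R} = \mathcal{A}^\Delta$, combine Lemmas \ref{lemclosure} and \ref{closure} to satisfy the hypotheses of Lemma \ref{restriction}, conclude that $X$ has sample paths in $D(\Rnum_+,\Rnum_+^d)$, and then descend from $\mathcal{A}^\Delta$ to $\mathcal{A}$. Your explicit construction of the extension $\tilde{f}$ and the identification of the two martingales simply spells out what the paper dismisses as ``easy to check by the definition of $\mathcal{A}^\Delta$,'' and does so correctly.
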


\begin{proof}
If we take
\begin{equation*}
S = (\Rnum_+^d)^\Delta,\;\;\;U = \Rnum_+^d,\;\;\;\mathcal{R} = \mathcal{A}^\Delta,
\end{equation*}
then it follows from Lemmas \ref{lemclosure} and \ref{closure} that all the conditions in Lemma \ref{restriction} are satisfied. Then $X$ has sample paths in $D(\Rnum_+,\Rnum_+^d)$. By the definition of $\mathcal{A}^\Delta$, it is easy to check that $X$ is also a solution to the martingale problem for $\mathcal{A}$.
\end{proof}

We still need to prove the uniqueness of the martingale problem for $\mathcal{A}$. To this end, we define a sequence of auxiliary operators $\set{\mathcal{A}_n}$ with bounded coefficients. For each $n\geq 1$, let $\mathcal{A}_n$ be a L\'{e}vy-type operator on $B(\Rnum_+^d)$ with domain $D(\mathcal{A}_n) = C_c^1(\Rnum_+^d)$ defined as
\begin{equation*}
\mathcal{A}_nf(x) = \sum_{i=1}^dF^{(n)}_i(x)\partial_if(x)
+\sum_{i=1}^Nc_i^{(n)}(x)\int_{\Rnum_+^d} [f(x+y)-f(x)]\mu_i(dy),
\end{equation*}
where
\begin{equation*}
F^{(n)}(x) = F\left(\tfrac{|x|\wedge n}{|x|}\cdot x\right),\;\;\;
c_i^{(n)}(x) = c_i\left(\tfrac{|x|\wedge n}{|x|}\cdot x\right).
\end{equation*}
For any $f\in C_c^1(\Rnum_+^d)$, it is easy to see that $\mathcal{A}f(x) = \mathcal{A}_nf(x)$ for any $|x|\leq n$. It is convenient to rewrite the operator $\mathcal{A}_n$ as
\begin{equation*}
\mathcal{A}_nf(x) = \sum_{i=1}^db^{(n)}_i(x)\partial_if(x)
+\int_{\Rnum_+^d}[f(x+y)-f(x)-\sum_{i=1}^dy_i\partial_if(x)I_{\{|y|<1\}}]\eta(x,dy),
\end{equation*}
where
\begin{equation*}
b^{(n)}(x) = F^{(n)}(x)+\sum_{i=1}^Nc_i^{(n)}(x)\int_{\{|y|<1\}}y\mu_i(dy),\;\;\;
\eta(x,dy) = \sum_{i=1}^Nc_i^{(n)}(x)\mu_i(dy).
\end{equation*}

\begin{lemma}\label{lemBn}
For each $\nu\in\mathcal{P}(\Rnum_+^d)$, the martingale problem for $(\mathcal{A}_n,\nu)$ is well posed.
\end{lemma}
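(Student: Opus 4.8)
The plan is to exploit the fact that after the radial truncation at radius $n$ all coefficients become bounded, so that $\mathcal{A}_n$ is a \emph{bounded perturbation} of a purely deterministic transport generator; this circumvents the obstruction noted after \eqref{eqoperator}, since the bounded-perturbation argument never uses ellipticity and therefore tolerates the degeneracy. Concretely, I would write $\mathcal{A}_n = \mathcal{L}_0 + \mathcal{K}$, where $\mathcal{L}_0 f(x) = \sum_{i=1}^d b_i^{(n)}(x)\partial_i f(x)$ is the first-order part and $\mathcal{K}f(x) = \sum_{i=1}^N c_i^{(n)}(x)\int_{\Rnum_+^d}[f(x+y)-f(x)]\mu_i(dy)$ is the jump part. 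Because the radial projection $x\mapsto (|x|\wedge n)x/|x|$ onto the ball of radius $n$ is nonexpansive, $b^{(n)}$ and each $c_i^{(n)}$ are bounded and Lipschitz; in particular $\sup_x c^{(n)}(x)<\infty$ with $c^{(n)}=\sum_i c_i^{(n)}$, so $\mathcal{K}$ extends to a bounded linear operator on $B(\Rnum_+^d)$ with $\|\mathcal{K}\|\le 2\sup_x c^{(n)}(x)$.

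First I would establish well-posedness of the martingale problem for the transport operator $\mathcal{L}_0$. Since $b^{(n)}$ is bounded and Lipschitz, the ODE $\dot x = b^{(n)}(x)$ has a unique global flow $\psi$, and $\psi$ leaves $\Rnum_+^d$ forward-invariant because $b^{(n)}$ still points into the orthant on its boundary (the drift $F^{(n)}$ inherits this from $F$, and the correction $\sum_i c_i^{(n)}(x)\int_{\{|y|<1\}}y\,\mu_i(dy)$ has nonnegative components since each $\mu_i$ is supported on $\Rnum_+^d$). The flow induces the semigroup $T_0(t)f = f\circ\psi(t,\cdot)$, which is a Feller semigroup on $C_0(\Rnum_+^d)$ and for which $C_c^1(\Rnum_+^d)$ is a core with $\mathcal{L}_0$ as generator; the unique solution of the martingale problem for $(\mathcal{L}_0,\nu)$ is then the deterministic path $t\mapsto\psi(t,\cdot)$.

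Next I would add back the jumps by a bounded-perturbation argument. Since $\mathcal{K}$ is bounded and $\mathcal{L}_0 + \mathcal{K}$ satisfies the positive maximum principle (it is a L\'evy-type generator with inward drift and $\Rnum_+^d$-supported jumps), the closure $\overline{\mathcal{L}_0}+\mathcal{K}$ again generates a Feller semigroup on $C_0(\Rnum_+^d)$, and $C_c^1(\Rnum_+^d)$ remains a core because $\mathcal{K}$ is bounded. Invoking the semigroup characterization of well-posed martingale problems \cite[Theorem 4.4.1]{ethier2009markov} then yields that the martingale problem for $(\mathcal{A}_n,\nu)$ is well posed for every $\nu\in\mathcal{P}(\Rnum_+^d)$. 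Existence can alternatively be seen constructively: since the jump rate $c^{(n)}$ is bounded, the number of jumps in $[0,t]$ is dominated by a Poisson random variable of mean $t\sup_x c^{(n)}(x)$, so the associated PDMP is automatically nonexplosive and the computation of Lemma \ref{lemclosure} closes directly, without the Fatou and Gronwall steps needed there.

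I expect the main obstacle to be the rigorous verification of the Feller and core hypotheses on the orthant rather than the perturbation step itself. One must check that $T_0(t)$ genuinely maps $C_0(\Rnum_+^d)$ into itself (which requires $|\psi(t,x)|\to\infty$ as $|x|\to\infty$, obtained from the two-sided Lipschitz bounds on the flow) and is strongly continuous, that $C_c^1(\Rnum_+^d)$ is a core for $\overline{\mathcal{L}_0}$ on the half-space, and that the perturbed generator is conservative and positivity-preserving up to the boundary. These boundary issues are exactly where a naive time-reversal of test functions would fail, so I would route everything through the Hille--Yosida--Ray theorem and the core property rather than attempting a direct duality argument.
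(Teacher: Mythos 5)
Your strategy is genuinely different from the paper's: the paper never touches semigroup theory, but instead verifies the bounded-Lipschitz hypotheses of Kurtz's equivalence theorem between stochastic equations and martingale problems \cite[Theorems 2.3 and 3.1]{kurtz2011equivalence}, writing the jump kernel as $\eta(x,\Gamma)=\int_S\lambda(x,u)I_\Gamma(\gamma(u))\nu(du)$ so that well-posedness follows from pathwise uniqueness of the associated Poisson-driven stochastic equation. Your perturbation step is sound (a bounded jump operator satisfying the positive maximum principle can be added to a Feller generator by a Neumann-series argument, and cores are preserved since the graph norms are equivalent), and your constructive existence remark is correct. The genuine gap is the foundation: the claim that $\overline{\mathcal{L}_0}$ generates a Feller semigroup on $C_0(\Rnum_+^d)$ with $C_c^1(\Rnum_+^d)$ as a core is asserted, not proved, and this is where the whole difficulty of the lemma sits. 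The drift is only Lipschitz --- $F$ is merely assumed Lipschitz in the paper, and even if it were smooth, the radial truncation $x\mapsto\tfrac{|x|\wedge n}{|x|}x$ destroys differentiability at $|x|=n$. Hence the flow $\psi(t,\cdot)$ is Lipschitz but in general not $C^1$, so neither $T_0(t)f=f\circ\psi(t,\cdot)$ nor the resolvent $R_\lambda g=\int_0^\infty e^{-\lambda t}g(\psi(t,\cdot))dt$ maps $C_c^1$ into $C^1$; the standard ``dense invariant subspace'' criterion for cores is unavailable. Moreover, the Hille--Yosida--Ray hypothesis that $(\lambda-\mathcal{L}_0)(C_c^1)$ be dense \emph{is} the core property, so ``routing everything through Hille--Yosida--Ray and the core property'' is circular. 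The natural proof of that range density is a duality argument: a finite signed measure annihilating the range solves $\lambda\mu+\mathrm{div}(F^{(n)}\mu)=0$ weakly, and one must invoke uniqueness of measure-valued solutions of the continuity equation for Lipschitz vector fields to conclude $\mu=0$ --- exactly the kind of argument you said you would avoid, and of essentially the same depth as the uniqueness you are trying to establish. As written, the proof is therefore incomplete at its key step, which the paper's SDE route bypasses entirely.

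Two further points. First, your decomposition double-counts the compensation: you take $\mathcal{L}_0$ with the paper's compensated drift $b^{(n)}=F^{(n)}+\sum_{i}c_i^{(n)}\int_{\{|y|<1\}}y\,\mu_i(dy)$ yet pair it with the \emph{uncompensated} jump operator $\mathcal{K}$, so $\mathcal{L}_0+\mathcal{K}\neq\mathcal{A}_n$; the fix is to use $F^{(n)}$ as the transport drift (it still points into the orthant on the boundary, so forward invariance survives). Second, if you want to keep a perturbative architecture, a cleaner repair is to prove well-posedness of the martingale problem for the transport operator directly --- any solution has vanishing quadratic variation and must follow the unique ODE flow by Gronwall's inequality --- and then apply a perturbation theorem for \emph{martingale problems} under bounded jump perturbations, rather than a perturbation theorem for generators; this avoids the Feller and core machinery altogether.
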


\begin{proof}
Suppose that there exist $\lambda:\Rnum^d\times S\rightarrow[0,1]$, $\gamma:S\rightarrow\Rnum^d$, and a $\sigma$-finite measure $\nu$ on a measurable space $(S,\mathcal{S})$ such that
\begin{equation*}
\eta(x,\Gamma) = \int_S \lambda(x,u)I_{\Gamma}(\gamma(u))\nu(du),\;\;\;
\textrm{for any}\;\Gamma\in \mathcal{B}(\Rnum_+^d),x\in\Rnum_+^d.
\end{equation*}
In addition, set
\begin{equation*}
S_1 = \set{u\in S:|\gamma(u)|<1},\;\;\;S_2 = \set{u\in S:|\gamma(u)|\geq 1}.
\end{equation*}
By a classical result of Kurtz about the well-posedness of the martingale problem for a L\'{e}vy-type operator \cite[Theorems 2.3 and 3.1]{kurtz2011equivalence}, the martingale problem for $(\mathcal{A}_n,\nu)$ is well posed if there exists a constant $M>0$ such that for any $x,y\in\Rnum_+^d$, the following three conditions are satisfied:
\begin{gather}
|b^{(n)}(x)|+\int_{S_1}\lambda(x,u)|\gamma(u)|^2\nu(du)+\int_{S_2}\lambda(x,u)|\gamma(u)|\nu(du)<M,\nonumber\\
|b^{(n)}(x)-b^{(n)}(y)|\leq M|x-y|,\label{threeconditions}\\
\int_{S}|\lambda(x,u)-\lambda(y,u)|\cdot|\gamma(u)|\nu(du)\leq M|x-y|.\nonumber
\end{gather}
To verify the above three conditions, let $S = \Rnum_+^d\times\{1,2,\cdots,N\}$ and for each $(u,i)\in S$, choose
\begin{equation*}
\lambda(x,u,i) = \frac{c_i^{(n)}(x)}{\beta_n},\;\;\;\gamma(u,i) = u,\;\;\;
\nu(du,di) = \beta_n\mu_i(du)n(di),
\end{equation*}
where $n(di)$ is the counting measure on $\{1,2,\cdots,N\}$ and
\begin{equation*}
\beta_n = \left\|\sum_{i=1}^Nc_i^{(n)}\right\|+1.
\end{equation*}
Then for any Borel set $\Gamma\subset\Rnum_+^d$,
\begin{equation*}
\int_S\lambda(x,u,i)I_{\Gamma}(\gamma(u,i))\nu(du,di)
= \sum_{i=1}^Nc_i^{(n)}(x)\int_{\Rnum_+^d}I_{\Gamma}(u)\mu_i(du)\\
= \eta(x,\Gamma).
\end{equation*}
We next check the three conditions listed in \eqref{threeconditions}. For any $x,y\in \Rnum^d$, it is easy to see that
\begin{equation*}
\int_{S_1}\lambda(x,u,i)|\gamma(u,i)|^2\nu(du,di)+\int_{S_2}\lambda(x,u,i)|\gamma(u,i)|\nu(du,di)
\leq \beta_n\int_{\Rnum_+^d}|x|\mu_i(dx).
\end{equation*}
Moreover, we have
\begin{align*}
\int_{S}|\lambda(x,u,i)-\lambda(y,u,i)|\cdot|\gamma(u,i)|\nu(du,di)
&\leq \sum_{i=1}^N|c_i^{(n)}(x)-c_i^{(n)}(y)|\int_{\Rnum_+^d}|x|\mu_i(dx)\\
&\leq \sum_{i=1}^NL_{c_i}\int_{\Rnum_+^d}|x|\mu_i(dx)|x-y|.
\end{align*}
Since both $b^{(n)}$ and $c_i^{(n)}$ are bounded and Lipschitz, we obtain the desired result.
\end{proof}

To proceed, we recall the following definition \cite[Section 4.6]{ethier2009markov}.

\begin{definition}
The notation is the same as in Definition \ref{martingale}. Let $U$ be an open subset of $S$ and let \begin{equation*}
\tau=\inf\{t\geq 0: Y(t)\not\in U\textrm{\;or\;}Y(t-)\not\in U\}
\end{equation*}
be the first exit time of $Y$ from $U$. For any $\nu\in\mathcal{P}(S)$, we say that $Y$ is a solution to the \emph{stopped martingale problem} for $(\mathcal{R},\nu, U)$ if \\
(a) $Y$ has the initial distribution $\nu$, \\
(b) $Y(\cdot) = Y(\cdot\wedge\tau)$ almost surely, and\\
(c) for any $f\in\mathcal{D}(\mathcal{R})$,
\begin{equation*}
f(Y(t))-f(Y(0))-\int_0^{t\wedge\tau}\mathcal{R}f(Y(s))ds
\end{equation*}
is a martingale with respect to the natural filtration generated by $Y$.
\end{definition}

We are now in a position to prove Theorem \ref{thmwellposed}.

\begin{proof}[Proof of Theorem \ref{thmwellposed}]
By Lemma \ref{proexistence}, $X$ is a solution to the martingale problem for $\mathcal{A}$. We next prove the uniqueness of the martingale problem. For each $n\geq 1$, let $U_n = \{x\in\Rnum_+^d: |x|<n\}$. It is obvious that $\mathcal{A}_nf|_{U_n} = (\mathcal{A}f)|_{U_n}$ for any $f\in D(\mathcal{A})$. By Lemma \ref{lemBn} and \cite[Theorem 4.6.1]{ethier2009markov}, there exists a unique solution to the stopped martingale problem for $(\mathcal{A},\nu,U_n)$. Since $\Rnum_+^d$ is the union of all $U_n$, it follows from \cite[Theorem 4.6.2]{ethier2009markov} that the martingale problem for $\mathcal{A}$ is unique.
\end{proof}

We next prove Theorem \ref{MC}.

\begin{proof}[Proof of Theorem \ref{MC}]
In analogy to the proof of Lemma \ref{lemclosure}, we can prove that $X_V$ is a solution to the martingale problem for $\mathcal{A}_V^{\Delta}$. Let $f$ be a function on $E_V$ defined by
\begin{equation*}
f\left(\tfrac{n}{V}\right)=\sum_{i=1}^{d}\tfrac{n_i}{V}+1.
\end{equation*}
Direct computations show that
\begin{align*}
\mathcal{A}_Vf\left(\tfrac{n}{V}\right)
=&\; \sum_{i=1}^dF_i\left(\tfrac{n}{V}\right)
+\tfrac{1}{V}\sum_{i=1}^Nc_i\left(\tfrac{n}{V}\right)\sum_{j=1}^d\sum_{m\in \Nnum^d}m_jp_i(V,m)\\
\leq&\; d|F\left(\tfrac{n}{V}\right)|
+\tfrac{d}{V}\sum_{i=1}^Nc_i\left(\tfrac{|n|}{V}\right)\sum_{m\in\Nnum^d}|m|p_i(V,m)\\
\leq&\; d|F(0)|+L_F\tfrac{n}{V}
+\tfrac{d}{V}\sum_{i=1}^N\left[c_i(0)+L_{c_i}\tfrac{|n|}{V}\right]\sum_{m\in \Nnum^d}|m|p_i(V,m)\\
\leq&\; \bigg[d|F(0)|+L_F+\tfrac{d}{V}\sum_{i=1}^N\left[c_i(0)+L_{c_i}\right]\sum_{m\in \Nnum^d}|m|p_i(V,m)\bigg]f\left(\tfrac{n}{V}\right).
\end{align*}
By \cite[Theorem 2.25]{chen2004markov}, $X_V$ is nonexplosive and thus is a solution to the martingale problem for $\mathcal{A}_V$. By using the localization technique as in the proof of Lemma \ref{lemBn}, it is easy to prove that $X_V$ is the unique solution to the martingale problem for $(\mathcal{A}_V,\nu_V)$.
\end{proof}

\section{Proof of Theorem \ref{thmergodicity}}\label{coupling}
In this section, we shall prove the exponential ergodicity of $X$. For simplicity of notation, we only consider the case of $N = 1$, where the operator $\mathcal{A}$ has the form of \eqref{eqoperator}. The proof of the general case is totally the same.

To prove the exponential ergodicity of $X$, we construct a coupling operator as follows. Let $\tilde{\mathcal{A}}$ be an operator on $B(\Rnum_{+}^d\times\Rnum_{+}^d)$ with domain $D(\tilde{\mathcal{A}}) = C_c^1(\Rnum_{+}^d\times\Rnum_{+}^d)$ defined by
\begin{align*}
\tilde{\mathcal{A}}f(x,y) =&\; \langle F(x), \nabla_xf(x,y)\rangle+\langle F(y),\nabla_yf(x,y)\rangle\\
&\; +(c(x)\wedge c(y))\int_{\Rnum_+^d}[f(x+z,y+z)-f(x,y)]\mu(dz)\\
&\; +(c(x)-c(y))^{+}\int_{\Rnum_+^d}[f(x+z,y)-f(x,y)]\mu(dz)\\
&\; +(c(x)-c(y))^{-}\int_{\Rnum_+^d}[f(x,y+z)-f(x,y)]\mu(dz).
\end{align*}

The following lemma, whose proof can be found in \cite[Theorem 4.5.4]{ethier2009markov}, plays an important role in proving the existence of the martingale problem.

\begin{lemma}\label{exsistence}
Let $S$ be a locally compact separable metric space and let $\mathcal{R}$ be a densely defined linear operator on $C_0(S)$ with domain $\mathcal{D}(\mathcal{R})$. Suppose that $\mathcal{R}$ satisfies the \emph{positive maximum principle}, that is, if $f\in \mathcal{D}(\mathcal{R})$ attains its maximum at $x_0\in S$, then $\mathcal{R}f(x_0)\leq 0$. Then for any $\nu\in \mathcal{P}(S^\Delta)$, there exists a solution to the martingale problem for $(\mathcal{R}^\Delta,\nu)$ with sample paths in $D(\Rnum_+,S^\Delta)$.
\end{lemma}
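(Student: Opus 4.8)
The plan is to reduce everything to the compact space $S^\Delta$ and then realize the desired process as a Feller process whose generator extends $\mathcal{R}^\Delta$. First I would pass to the one-point compactification: since $S$ is locally compact and separable, $S^\Delta$ is a compact metric space and $C(S^\Delta)=C_0(S)\oplus\Rnum\cdot 1$. Extending $\mathcal{R}$ to $\mathcal{R}^\Delta$ as defined in the excerpt, one checks that $\mathcal{D}(\mathcal{R}^\Delta)$ is dense in $C(S^\Delta)$, that $\mathcal{R}^\Delta 1=0$ (conservativeness), and that $\mathcal{R}^\Delta$ still satisfies the positive maximum principle on $S^\Delta$; the only new case, where the maximum is attained at $\Delta$, is immediate because $\mathcal{R}^\Delta f(\Delta)=0$.

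Second, I would extract dissipativity directly from the positive maximum principle. If $f\in\mathcal{D}(\mathcal{R}^\Delta)$ and $\|f\|$ is attained at a point $x_0$ with $f(x_0)=\|f\|\geq 0$ (replacing $f$ by $-f$ otherwise), then for every $\lambda>0$ the principle gives $\mathcal{R}^\Delta f(x_0)\leq 0$, whence $\|\lambda f-\mathcal{R}^\Delta f\|\geq \lambda f(x_0)-\mathcal{R}^\Delta f(x_0)\geq\lambda\|f\|$. Thus $\mathcal{R}^\Delta$ is dissipative, hence closable, and its closure again satisfies the positive maximum principle and annihilates constants.

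Third, and this is the analytic heart, I would produce an extension $\widehat{\mathcal{R}}$ of the closure of $\mathcal{R}^\Delta$ that remains dissipative, preserves the positive maximum principle and the identity $\widehat{\mathcal{R}}1=0$, and in addition satisfies the range condition $\overline{(\lambda-\widehat{\mathcal{R}})\mathcal{D}(\widehat{\mathcal{R}})}=C(S^\Delta)$ for some $\lambda>0$. With this in hand the Hille--Yosida--Ray theorem applies and $\widehat{\mathcal{R}}$ generates a strongly continuous positive contraction semigroup $\{T(t)\}$ on $C(S^\Delta)$ with $T(t)1=1$, i.e. a Feller semigroup, corresponding to a Feller process $Y$ with sample paths in $D(\Rnum_+,S^\Delta)$ and the prescribed initial distribution $\nu$. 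Finally I would verify the martingale property: for $f\in\mathcal{D}(\widehat{\mathcal{R}})$ the identity $T(t)f-f=\int_0^tT(s)\widehat{\mathcal{R}}f\,ds$ together with the Markov property shows that $f(Y(t))-f(Y(0))-\int_0^t\widehat{\mathcal{R}}f(Y(s))\,ds$ is a martingale; restricting to $f\in\mathcal{D}(\mathcal{R}^\Delta)\subset\mathcal{D}(\widehat{\mathcal{R}})$, where $\widehat{\mathcal{R}}f=\mathcal{R}^\Delta f$, yields a solution to the martingale problem for $(\mathcal{R}^\Delta,\nu)$.

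The main obstacle is the third step: from the positive maximum principle and density alone there is no reason the range condition should hold for $\mathcal{R}^\Delta$ itself, so one must genuinely enlarge the operator, making $(\lambda-\widehat{\mathcal{R}})$ have dense range \emph{without} ever destroying positivity or the contraction estimate. This is the delicate functional-analytic point, carried out by a Hahn--Banach type maximal-dissipative-extension argument that exploits the compactness of $S^\Delta$ and the conservativeness $\widehat{\mathcal{R}}1=0$. An alternative that sidesteps the abstract extension is to approximate $\mathcal{R}^\Delta$ in the bp-sense by bounded generators $B_n$ of Feller jump processes $Y_n$, use the compactness of $S^\Delta$ (which supplies the compact-containment condition for free) together with the uniform bounds on $\|B_n f\|$ coming from dissipativity to verify an Aldous-type oscillation condition and hence relative compactness of $\{Y_n\}$ in $D(\Rnum_+,S^\Delta)$, and then pass to the limit in the martingale identity using $B_nf\to\mathcal{R}^\Delta f$ uniformly; here the difficulty simply migrates to constructing the approximants and checking tightness uniformly in $n$.
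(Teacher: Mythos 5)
The paper never proves this lemma itself: it is quoted verbatim from Ethier and Kurtz (their Theorem 4.5.4), so the relevant comparison is with that proof. Your second step (positive maximum principle implies dissipativity) is the standard argument and is fine, but your main route breaks down exactly at the step you yourself call the ``analytic heart.'' You need an extension $\widehat{\mathcal{R}}$ of $\mathcal{R}^\Delta$ that is simultaneously dissipative, conservative, still satisfies the positive maximum principle, \emph{and} satisfies the range condition, and you assert this can be produced by a ``Hahn--Banach type maximal-dissipative-extension argument.'' No such argument exists. Zorn's lemma does give a maximal dissipative extension, but on a general Banach space (and $C(S^\Delta)$ is not a Hilbert space, where Phillips' theorem would apply) maximal dissipative does \emph{not} imply the range condition; and even if one had a dissipative extension with $\overline{(\lambda-\widehat{\mathcal{R}})\mathcal{D}(\widehat{\mathcal{R}})}=C(S^\Delta)$, nothing guarantees the extension still satisfies the positive maximum principle, which is what Hille--Yosida--Ray needs for positivity of the semigroup. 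Indeed, if every densely defined conservative operator satisfying the positive maximum principle extended to a Feller generator, the martingale-problem existence theory would be largely superfluous: one could always manufacture a Feller Markov solution, whereas it is known that solutions of such martingale problems need not be Markov at all, and Markov selections (Krylov) need not be Feller. So your third step is not a deferrable technicality; it is an unproven (and in general false) claim, and the primary argument collapses there.

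Your fallback sketch in the last paragraph is essentially the route Ethier and Kurtz actually take, but the part you wave off (``constructing the approximants'') is the entire content of their proof, and it is precisely where the positive maximum principle is used. Concretely: for each $n$ and each $x\in S^\Delta$ one defines, on the range of $I-n^{-1}\mathcal{R}^\Delta$, the functional $(I-n^{-1}\mathcal{R}^\Delta)f\mapsto f(x)$. Dissipativity makes it well defined with norm at most one; the positive maximum principle makes it \emph{positive} (if $(I-n^{-1}\mathcal{R}^\Delta)f\geq 0$ then $f$ cannot have a negative minimum, so $f\geq 0$); conservativeness gives it the value $1$ at the constant function $1$. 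A Hahn--Banach extension preserving positivity, followed by the Riesz representation theorem, yields probability kernels $\eta_n(x,dy)$ with
\begin{equation*}
\int_{S^\Delta}\bigl[(I-n^{-1}\mathcal{R}^\Delta)f\bigr](y)\,\eta_n(x,dy) = f(x),
\qquad f\in\mathcal{D}(\mathcal{R}^\Delta),
\end{equation*}
and one must also take care that $x\mapsto\eta_n(x,\cdot)$ can be chosen measurably. These kernels define discrete-time chains whose time-rescaled versions are tight in $D(\Rnum_+,S^\Delta)$ (compactness of $S^\Delta$ supplies compact containment, and the associated discrete martingales supply the oscillation bounds), and since the resulting approximate generators converge uniformly to $\mathcal{R}^\Delta f$ on $\mathcal{D}(\mathcal{R}^\Delta)$, every limit point solves the martingale problem for $(\mathcal{R}^\Delta,\nu)$. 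Without this resolvent-functional construction, neither of your two routes constitutes a proof.
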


The following lemma gives the existence of the martingale problem for $\tilde{\mathcal{A}}$.

\begin{lemma}\label{closure2}
For each $\nu\in \mathcal{P}(\Rnum_+^d\times\Rnum_+^d)$, there exists a solution to the martingale problem for $(\tilde{\mathcal{A}},\nu)$.
\end{lemma}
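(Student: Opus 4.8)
The plan is to obtain existence from the general criterion in Lemma \ref{exsistence}, which produces a solution whenever the operator is densely defined on $C_0$ and satisfies the positive maximum principle, and then to confine the resulting process to $S := \Rnum_+^d\times\Rnum_+^d$. First I would check that $\tilde{\mathcal{A}}$ is a densely defined linear operator on $C_0(S)$ that maps its domain $C_c^1(S)$ into $C_0(S)$. Density is immediate since $C_c^1(S)$ is dense in $C_0(S)$. For the mapping property, note that for $f\in C_c^1(S)$ supported in a ball $B_R$, each jump integrand vanishes outside $B_R$: because $z\in\Rnum_+^d$ only shifts arguments further into the orthant, $|(x+z,y+z)|\geq|(x,y)|$ and $|(x+z,y)|\geq|(x,y)|$, so $f$ evaluated at any shifted point can be nonzero only when $(x,y)\in B_R$. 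Together with the continuity of $F$ and $c$ and the convergence of the jump integrals (guaranteed by $\mu$ having finite mean and $f$ being a bounded $C^1$ function), this shows $\tilde{\mathcal{A}}f\in C_c(S)\subset C_0(S)$.

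The conceptual core is the positive maximum principle. Suppose $f\in C_c^1(S)$ attains its maximum at $(x_0,y_0)$. Since $(x_0,y_0)$ is a global maximizer and every shifted point $(x_0+z,y_0+z)$, $(x_0+z,y_0)$, $(x_0,y_0+z)$ again lies in $S$, each bracketed difference in the three jump terms is $\leq 0$, while the prefactors $c(x_0)\wedge c(y_0)$, $(c(x_0)-c(y_0))^{+}$, $(c(x_0)-c(y_0))^{-}$ are all nonnegative; hence the entire jump part is $\leq 0$. For the drift part $\langle F(x_0),\nabla_xf(x_0,y_0)\rangle+\langle F(y_0),\nabla_yf(x_0,y_0)\rangle$ I would argue coordinatewise. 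For an interior coordinate the first-order condition forces the corresponding partial derivative to vanish; for a boundary coordinate, say $(x_0)_i=0$, the maximizer can only be perturbed into the orthant, so $\partial_{x_i}f(x_0,y_0)\leq 0$, while the inward-pointing property of $F$ established in Section \ref{secmodel} gives $F_i(x_0)\geq 0$, whence $F_i(x_0)\partial_{x_i}f(x_0,y_0)\leq 0$. The same holds for the $y$-block, so the drift part is $\leq 0$ and $\tilde{\mathcal{A}}f(x_0,y_0)\leq 0$. Applying Lemma \ref{exsistence} with $\mathcal{R}=\tilde{\mathcal{A}}$ then yields a solution $\tilde X$ to the martingale problem for $(\tilde{\mathcal{A}}^\Delta,\nu)$ with sample paths in $D(\Rnum_+,S^\Delta)$.

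The remaining and most technical step is to show that $\tilde X$ never reaches the cemetery point $\Delta$, so that it is in fact a solution for $\tilde{\mathcal{A}}$ itself. Mirroring Lemmas \ref{closure} and \ref{proexistence}, I would prove that $(I_S,0)$ lies in the bp-closure of the graph $\mathcal{G}(\tilde{\mathcal{A}}^\Delta)$ by taking cutoffs $f_n(x,y)=g_n(|(x,y)|)$ with $g_n$ as in \eqref{uryson}, so that $|\nabla f_n|\leq C/n$, $f_n\to 1$ pointwise, and $\tilde{\mathcal{A}}f_n$ is supported where $|(x,y)|\leq 3n$. On that region the drift contribution is bounded by $(|F(x)|+|F(y)|)/n$ and each jump contribution by $c(x)\wedge c(y)$ or $(c(x)-c(y))^{\pm}$ times $\tfrac{1}{n}\int|z|\,\mu(dz)$; using the linear growth of $F$ and $c$, these are uniformly bounded in $n$ exactly as in \eqref{eqlem13}, while for fixed $(x,y)$ the drift terms vanish for large $n$ and the jump terms are dominated by $\mu(\{z:|z|>n-|(x,y)|\})\to 0$. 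Hence $(f_n,\tilde{\mathcal{A}}f_n)$ bp-converges to $(I_S,0)$. Since $\nu$ is supported on $S$, Lemma \ref{restriction} with $U=S$ then gives $\Pnum(\tilde X\in D(\Rnum_+,S))=1$, and restricting the test functions to $S$ turns the $\tilde{\mathcal{A}}^\Delta$-martingale property into the $\tilde{\mathcal{A}}$-martingale property, completing the argument. I expect the confinement step—securing the uniform bound on $\tilde{\mathcal{A}}f_n$ across all three jump terms of the coupling—to be the main obstacle, since the existence lemma only controls the process in the compactified space $S^\Delta$.
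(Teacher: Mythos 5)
Your proposal is correct and follows essentially the same route as the paper's proof: verify the positive maximum principle and apply Lemma \ref{exsistence} to get a solution on the compactified space, then show $(I_{\Rnum_+^d\times\Rnum_+^d},0)$ lies in the bp-closure of $\mathcal{G}(\tilde{\mathcal{A}}^\Delta)$ via cutoff functions and invoke Lemma \ref{restriction} to confine the process to $\Rnum_+^d\times\Rnum_+^d$. The only differences are cosmetic — you use the cutoff $g_n(|(x,y)|)$ where the paper uses the product $g_n(|x|)g_n(|y|)$, and you spell out the boundary/interior coordinatewise verification of the maximum principle that the paper leaves as ``easy to check.''
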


\begin{proof}
It is easy to check that $\tilde{\mathcal{A}}$ is a densely defined linear operator on $C_0(\Rnum_+^d\times\Rnum_+^d)$ and $\tilde{\mathcal{A}}$ satisfies the positive maximum principle. Then by Lemma \ref{exsistence}, there exists a solution $Y$ to the martingale problem for $(\tilde{\mathcal{A}}^\Delta,\nu)$. We next prove that $(I_{\Rnum_+^d\times\Rnum_+^d},0)$ is in the bp-closure of $\mathcal{G}(\tilde{\mathcal{A}}^\Delta)$. To do this, for each $n\geq 1$, we define a function $f_n\in C_c^1(\Rnum_+^d\times\Rnum_+^d)$ by
\begin{equation*}
f_n(x,y) = g_n(|x|)g_n(|y|),
\end{equation*}
where $g_n\in C_c^1(\Rnum_+)$ is the function defined in \eqref{uryson}. It is easy to see that $\tilde{\mathcal{A}}f_n(x,y) = 0$ for any $|x|\geq 3n$ or $|y|\geq 3n$. Moreover, it follows from the mean value theorem that for any $|x|<3n$ and $|y|<3n$,
\begin{align*}
|\tilde{\mathcal{A}}f_n(x,y)|
&\leq \frac{|F(x)|+|F(y)|}{n}+\frac{2c(x)\wedge c(y)}{n}\int_{\Rnum_+^d}|z|\mu(dz)
+\frac{|c(x)-c(y)|}{n}\int_{\Rnum_+^d}|z|\mu(dz)\\
&\leq \frac{|F(x)|+|F(y)|}{n}+\frac{2[c(x)+c(y)]}{n}\int_{\Rnum_+^d}|z|\mu(dz)\\
&\leq 2|F(0)|+6L_F+[2c(0)+6L_c]\int_{\Rnum_+^d}|z|\mu(dz),
\end{align*}
which shows that $\{\tilde{\mathcal{A}}f_n\}$ is uniformly bounded. For any $x,y\in\Rnum_+^d$, whenever $n\geq |x|\vee |y|$, we have
\begin{equation*}
\begin{split}
|\tilde{\mathcal{A}}f_k(x,y)| =&\; (c(x)\wedge c(y))\int_{|z|>n-|x|\vee|y|}|f(x+z,y+z)-f(x,y)|\mu(dz)\\
&\; +(c(x)-c(y))^{+}\int_{|z|>n-|x|}|f(x+z,y)-f(x,y)|\mu(dz)\\
&\; +(c(x)-c(y))^{-}\int_{|z|>n-|y|}|f(x,y+z)-f(x,y)|\mu(dz)\\
\leq&\; (c(x)\wedge c(y))\mu(\{z:|z|>n-|x|\vee|y|\})\\
&\; +(c(x)-c(y))^{+}\mu(\{z:|z|>n-|x|\})\\
&\; +(c(x)-c(y))^{-}\mu(\{z:|z|>n-|y|\}),
\end{split}
\end{equation*}
which tends to zero as $n\rightarrow\infty$. Thus $(f_n,\tilde{\mathcal{A}}f_n)$ bp-converges to $(I_{\Rnum_+^d\times\Rnum_+^d},0)$. If we take
\begin{equation*}
S =(\Rnum_+^d\times\Rnum_+^d)^\Delta,\;\;\;U = \Rnum_+^d\times\Rnum_+^d,\;\;\;
\mathcal{R} = \tilde{\mathcal{A}},
\end{equation*}
then all the conditions in Lemma \ref{restriction} are satisfied. Thus, $Y$ has sample paths in $D(\Rnum_+,\Rnum_+^d\times\Rnum_+^d)$. By the definition of $\tilde{\mathcal{A}}^\Delta$, it is easy to see that $Y$ is also a solution to the martingale problem for $(\tilde{\mathcal{A}},\nu)$.
\end{proof}

The following lemma shows that $\tilde{\mathcal{A}}$ indeed is the coupling operator of $\mathcal{A}$.

\begin{lemma}
For any $x,y\in \Rnum_+^d$, let $\delta_{x,y}$ be the point mass at $(x,y)$ and let $(X,Y)$ be a solution to the martingale problem for $(\tilde{\mathcal{A}},\delta_{x,y})$. Then $X$ is solution to the martingale problem for $(\mathcal{A},\delta_x)$ and $Y$ is the solution to the martingale problem for $(\mathcal{A},\delta_y)$.
\end{lemma}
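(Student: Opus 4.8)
The plan is to recover, from $\tilde{\mathcal{A}}$ applied to test functions that in the limit depend on only one coordinate, the $\mathcal{A}$-martingale functional of each marginal. Fix $g\in\mathcal{D}(\mathcal{A})=C_c^1(\Rnum_+^d)$; I would like to conclude that
\begin{equation*}
M^g(t) = g(X(t))-g(X(0))-\int_0^t\mathcal{A}g(X(s))\,ds
\end{equation*}
is a martingale. Viewed as a function on $\Rnum_+^d\times\Rnum_+^d$, the map $(x,y)\mapsto g(x)$ is not compactly supported and hence not in $\mathcal{D}(\tilde{\mathcal{A}})$, so first I would cut off in the second coordinate: with $g_n$ the function from \eqref{uryson}, set $f_n(x,y)=g(x)g_n(|y|)\in C_c^1(\Rnum_+^d\times\Rnum_+^d)=\mathcal{D}(\tilde{\mathcal{A}})$. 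Since $(X,Y)$ solves the martingale problem for $(\tilde{\mathcal{A}},\delta_{x,y})$, for every $n$ the process $f_n(X(t),Y(t))-f_n(X(0),Y(0))-\int_0^t\tilde{\mathcal{A}}f_n(X(s),Y(s))\,ds$ is a martingale, and the goal is to pass to the limit $n\to\infty$.

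The heart of the matter is a pointwise identity. Writing out $\tilde{\mathcal{A}}f_n(x,y)$ and fixing $(x,y)$, once $n\geq|y|$ we have $g_n(|y|)=1$ and $\nabla_y g_n(|y|)=0$, so the two drift contributions collapse to $\langle F(x),\nabla g(x)\rangle$. For the three jump terms I would apply the dominated convergence theorem (the integrands are bounded by $2\|g\|_\infty$ and $g_n(|y+z|)\to 1$ as $n\to\infty$), which sends the $(c(x)-c(y))^-$ term to zero and turns the other two into $[(c(x)\wedge c(y))+(c(x)-c(y))^+]\int_{\Rnum_+^d}[g(x+z)-g(x)]\mu(dz)$. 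The elementary identity $(c(x)\wedge c(y))+(c(x)-c(y))^+=c(x)$ then yields exactly $\mathcal{A}g(x)$. Thus $\tilde{\mathcal{A}}f_n(x,y)\to\mathcal{A}g(x)$ while $f_n(x,y)\to g(x)$ pointwise.

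The step I expect to be the main obstacle is verifying that $\{\tilde{\mathcal{A}}f_n\}$ is \emph{uniformly} bounded, which is what makes the convergence a genuine bp-convergence; this is delicate because $c$ may be unbounded while the support of $g_n$ grows with $n$. The saving observation is that every term is controlled by the compact support of $g$ together with the estimate $|g_n'|<1/n$: since $z\in\Rnum_+^d$ forces $|x+z|\geq|x|$, each jump term vanishes unless $|x|\leq R$, where $R$ bounds the support of $g$, and on $\{|x|\leq R\}$ both $c(x)$ and $(c(x)-c(y))^+$ are bounded by the Lipschitz growth of $c$; in the remaining $(c(x)-c(y))^-$ and $\langle F(y),\nabla_y g_n\rangle$ contributions the factors $c(y)$ and $|F(y)|$ grow at most linearly in $|y|$, and these are nonzero only on the transition region of $g_n$ where $|y|\lesssim n$, so the growth is absorbed by the $1/n$ from $g_n'$ (using $\int_{\Rnum_+^d}|z|\mu(dz)<\infty$). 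Granting uniform boundedness, $(f_n,\tilde{\mathcal{A}}f_n)$ bp-converges to $(\bar g,\overline{\mathcal{A}g})$ with $\bar g(x,y)=g(x)$ and $\overline{\mathcal{A}g}(x,y)=\mathcal{A}g(x)$, so this pair lies in the bp-closure of $\mathcal{G}(\tilde{\mathcal{A}})$; since the defining martingale relation is preserved under bp-convergence of the generator graph (by bounded convergence in the martingale identity), $M^g$ is a martingale with respect to the filtration generated by $(X,Y)$. As $M^g$ is adapted to the smaller filtration generated by $X$, it is a martingale for that filtration as well, and $X(0)=x$ almost surely, so $X$ solves the martingale problem for $(\mathcal{A},\delta_x)$. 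The assertion for $Y$ follows by the symmetric argument, using $f_n(x,y)=g(y)g_n(|x|)$ and the companion identity $(c(x)\wedge c(y))+(c(x)-c(y))^-=c(y)$.
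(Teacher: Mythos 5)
Your proposal is correct and follows essentially the same route as the paper's own proof: the same product cutoff $f(x)g_n(\abs{y})$ with $g_n$ from \eqref{uryson}, the same uniform-boundedness estimate exploiting the compact support of $g$ in the first variable and the $1/n$ decay of $g_n'$ against the linear growth of $F$ and $c$, and the same passage to the limit in the martingale identity by bounded/dominated convergence. The only cosmetic differences are that you phrase the limit step in bp-closure language and spell out the identity $(c(x)\wedge c(y))+(c(x)-c(y))^+=c(x)$ and the reduction to the natural filtration of $X$, which the paper leaves implicit.
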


\begin{proof}
For any $n\geq 1$ and $f\in C_c^1(\Rnum_+^d)$, let $h_n$ be a function on $\Rnum_+^d\times\Rnum_+^d$ defined by
\begin{equation*}
h_n(x,y) = f(x)g_n(|y|),
\end{equation*}
where $g_n\in C_c^1(\Rnum_+)$ is the function defined in \eqref{uryson}. It is obvious that $h_n\in C_c^1(\Rnum_+^d\times\Rnum_+^d)$. Therefore,
\begin{equation*}
h_n(X_t,Y_t)-h_n(X_0,Y_0)-\int_0^t \tilde{\mathcal{A}}h_n(X_s,Y_s)ds
\end{equation*}
is a martingale. Since $f$ has a compact support, there exists $\gamma>0$ such that $f(x) = 0$ for all $|x|\geq\gamma$. For any $|x|>\gamma$ or $|y|>3n$, it is easy to see that $\tilde{\mathcal{A}}h_n(x,y) = 0$. Moreover, straightforward calculations show that
\begin{align*}
|\tilde{\mathcal{A}}h_n(x,y)|
\leq&\; \|\langle F,\nabla f\rangle\|+\|f\||g_n'(|y|)||F(y)| \\
&\; +(c(x)\wedge c(y))\int_{\Rnum_+^d}|f(x+z)-f(x)|g_n(|y+z|)\mu(dz)\\
&\; +(c(x)-c(y))^+\int_{\Rnum_+^d}|f(x+z)-f(x)|g_n(|y|)\mu(dz)\\
&\; +c(y)\int_{\Rnum_+^d}|f(x)||g_n(|y+z|)-g_n(|y|)|\mu(dz)
\end{align*}
For any $|x|\leq\gamma$ and $|y|\leq 3n$, applying the mean value theorem yields
\begin{align*}
|\tilde{\mathcal{A}}h_n(x,y)| &\leq \|\langle F,\nabla f\rangle\|
+\frac{1}{n}\|f\||F(y)|+2\|f\|c(x)+\frac{1}{n}\|f\|c(y)\int_{\Rnum_+^d}|z|\mu(dz).
\end{align*}
Since $F$ and $c$ are Lipschitz functions, we have
\begin{equation*}
\frac{1}{n}|F(y)|\leq |F(0)|+3L_F,\;\;\;c(x)\leq c(0)+L_c\gamma,\;\;\;\frac{1}{n}c(y)\leq c(0)+3L_c,
\end{equation*}
which implies that $\{\tilde{\mathcal{A}}h_n\}$ is uniformly bounded. Moreover, it is easy to check that
\begin{equation*}
\lim_{n\rightarrow\infty}h_n(x,y) = f(x),\;\;\;
\lim_{n\rightarrow\infty}\tilde{\mathcal{A}}h_n(x,y) = \mathcal{A}f(x),\;\;\;
\textrm{for any}\;x,y\in\Rnum_+^d.
\end{equation*}
By the dominated convergence theorem,
\begin{equation*}
f(X_t)-f(X_0)-\int_0^t\mathcal{A}f(X_s)ds
\end{equation*}
is also a martingale. Therefore, $X$ is a solution to the martingale problem for $(\mathcal{A},\delta_{x})$. Similarly, $Y$ is a solution to the martingale problem for $(\mathcal{A},\delta_{y})$.
\end{proof}

Let $\{P_t\}$ be the transition semigroup generated by $X$. For any $\nu\in\mathcal{P}(\Rnum_+^d)$, let $\nu P_t$ be the probability measure defined by $\nu P_t(\cdot) = \Pnum_\nu(X_t\in\cdot)$. The following lemma plays an important role in studying the exponential ergodicity of $X$.

\begin{lemma}\label{lemexpon}
Under the conditions in Theorem \ref{thmergodicity}, we have
\begin{equation*}
W(\delta_xP_t,\delta_yP_t)\leq e^{-\tilde{r}t}|x-y|,\;\;\;\textrm{for any}\;x,y\in \Rnum_+^d.
\end{equation*}
\end{lemma}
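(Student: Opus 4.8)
The plan is to run a synchronous-coupling argument: show that the coupling $(X,Y)$ built from $\tilde{\mathcal{A}}$ contracts the Euclidean distance in expectation at exponential rate $\tilde r$, and then transfer this to the Wasserstein distance directly. First I would record that, by the preceding lemma, $X$ and $Y$ are solutions to the martingale problems for $(\mathcal{A},\delta_x)$ and $(\mathcal{A},\delta_y)$, so $(X_t,Y_t)$ is a coupling of $\delta_xP_t$ and $\delta_yP_t$. By the very definition of the $L^1$-Wasserstein distance as an infimum over couplings,
\[
W(\delta_xP_t,\delta_yP_t)\le \Enum|X_t-Y_t|,
\]
so it suffices to prove $\Enum|X_t-Y_t|\le e^{-\tilde r t}|x-y|$.

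The heart of the matter is the action of $\tilde{\mathcal{A}}$ on $f(x,y)=|x-y|$. The drift part equals $\langle F(x)-F(y),(x-y)/|x-y|\rangle$, which is at most $-r|x-y|$ by the dissipative condition \eqref{eqass}. The synchronous jump term vanishes identically, since $f(x+z,y+z)=f(x,y)$; this is precisely the feature of the coupling $\tilde{\mathcal{A}}$ that makes contraction possible. Finally, each asynchronous jump term is bounded, via the triangle inequality, by $(c(x)-c(y))^{\pm}\int_{\Rnum_+^d}|z|\mu(dz)$, and their sum is at most $|c(x)-c(y)|\int_{\Rnum_+^d}|z|\mu(dz)\le L_c|x-y|\int_{\Rnum_+^d}|z|\mu(dz)$ because $c$ is Lipschitz. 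Adding the three contributions gives the formal contraction estimate $\tilde{\mathcal{A}}f\le -\tilde r f$ with $\tilde r=r-L_c\int_{\Rnum_+^d}|z|\mu(dz)>0$. Feeding this into Dynkin's formula and Gronwall's inequality would yield the claim.

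Making this rigorous is the main obstacle, because $f(x,y)=|x-y|$ is neither differentiable on the diagonal nor compactly supported, so it does not belong to $\mathcal{D}(\tilde{\mathcal{A}})=C_c^1(\Rnum_+^d\times\Rnum_+^d)$. To handle the diagonal singularity I would replace $f$ by the smooth approximant $f_\epsilon(x,y)=\sqrt{|x-y|^2+\epsilon^2}$, which is $C^\infty$ with $\|\nabla f_\epsilon\|\le 1$ and for which the identical computation gives
\[
\tilde{\mathcal{A}}f_\epsilon(x,y)\le -\frac{r|x-y|^2}{\sqrt{|x-y|^2+\epsilon^2}}+L_c|x-y|\int_{\Rnum_+^d}|z|\mu(dz).
\]
To handle the lack of compact support I would localize: multiply $f_\epsilon$ by the cutoffs $g_n$ from \eqref{uryson} to return to $C_c^1$, apply the martingale property up to the exit time $\tau_n$ of $\{|x|\vee|y|<n\}$, and then send $n\to\infty$ using the nonexplosiveness of $X$ and $Y$ established in Theorem \ref{thmwellposed}, together with Fatou's lemma; this step simultaneously delivers the finiteness of $\Enum|X_t-Y_t|$. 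Letting $\epsilon\downarrow 0$ by monotone convergence then produces the integral inequality $\Enum|X_t-Y_t|\le |x-y|-\tilde r\int_0^t\Enum|X_s-Y_s|\,ds$.

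Setting $u(t)=\Enum|X_t-Y_t|$, Gronwall's inequality applied to this relation gives $u(t)\le e^{-\tilde r t}|x-y|$, which closes the argument. I expect the delicate points to be the interchange of the $n\to\infty$ and $\epsilon\downarrow 0$ limits inside the expectation, and the control of the jump contributions that carry a marginal outside the localization region; both are tame because $\mu$ has finite mean, so $\int_{\Rnum_+^d}|z|\mu(dz)<\infty$ dominates all the jump terms uniformly.
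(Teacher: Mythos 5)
Your overall strategy is the same as the paper's: the coupling operator $\tilde{\mathcal{A}}$, a smooth approximation of $(x,y)\mapsto|x-y|$, localization by cutoffs and stopping times, and passage to the limit via nonexplosiveness and Fatou. Your regularization $f_\epsilon(x,y)=\sqrt{|x-y|^2+\epsilon^2}$ is in one respect cleaner than the paper's choice $g_n(|x-y|)\chi_n(x)\chi_n(y)$: since $f_\epsilon$ is smooth across the diagonal, you never need to stop at the meeting time $T=\inf\{t>0:X_t=Y_t\}$, and hence you can skip the paper's final step of replacing $Y$ by the glued process $Y'_t=Y_tI_{\set{t<T}}+X_tI_{\set{t\geq T}}$ and arguing it has the same law as $Y$. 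Your estimates on $\tilde{\mathcal{A}}f_\epsilon$ are correct, up to the harmless error $r\epsilon$ coming from $|x-y|^2/\sqrt{|x-y|^2+\epsilon^2}\geq|x-y|-\epsilon$, and the identification of the marginals of the coupling with $\delta_xP_t$ and $\delta_yP_t$ is legitimate because the martingale problem for $\mathcal{A}$ is well posed.

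The genuine gap is your last step. The implication you invoke as Gronwall's inequality --- from $u(t)\leq u(0)-\tilde r\int_0^t u(s)\,ds$ for all $t\geq 0$, with $u\geq 0$ continuous, conclude $u(t)\leq u(0)e^{-\tilde r t}$ --- is false: the integral form of Gronwall's lemma requires a nonnegative coefficient in front of the integral. Counterexample (with $\tilde r=1$): let $u$ drop linearly from $1$ to $0$ on $[0,0.01]$, vanish on $[0.01,0.99]$, rise linearly to $0.9$ at $t=1$, and return to $0$ on $[1,1.1]$; then $u(t)+\int_0^t u(s)\,ds\leq 1$ for all $t$, yet $u(1)=0.9>e^{-1}$. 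An inequality anchored only at time $0$ cannot detect where the mass of $\int_0^t u$ sits. Two repairs are available inside your framework. (i) Dynkin's formula holds between any two times, so your argument actually yields $u(t)\leq u(s)-\tilde r\int_s^t u(\tau)\,d\tau$ for all $s\leq t$; then $u$ is non-increasing, hence $u(t)\left(1+\tilde r(t-s)\right)\leq u(s)$, and iterating over a partition of $[0,t]$ into $n$ pieces gives $u(t)\leq u(0)\left(1+\tilde r t/n\right)^{-n}\rightarrow u(0)e^{-\tilde r t}$. (ii) More directly, do what the paper does: before taking any limits, pass to the process
\begin{equation*}
e^{\tilde r t}f_\epsilon(X_t,Y_t)-\int_0^te^{\tilde r s}\left[\tilde r f_\epsilon+\tilde{\mathcal{A}}f_\epsilon\right](X_s,Y_s)\,ds,
\end{equation*}
which is a martingale by \cite[Lemma 4.3.2]{ethier2009markov} (after your localization); since $\tilde r f_\epsilon+\tilde{\mathcal{A}}f_\epsilon\leq(r+\tilde r)\epsilon$, optional stopping followed by $n\rightarrow\infty$ and $\epsilon\downarrow 0$ gives $\Enum\left[e^{\tilde r t}|X_t-Y_t|\right]\leq|x-y|$ with no Gronwall argument at all.
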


\begin{proof}
For any $n\geq 1$, let $g_n:\Rnum_+\rightarrow\Rnum_+$ be a function defined by
\begin{equation*}
g_n(t) = \begin{cases}
\frac{n}{2}t^2+\frac{1}{2n}, &0\leq t\leq \frac{1}{n}\\
t, & t>\frac{1}{n},
\end{cases}
\end{equation*}
We then choose $\chi_n\in C_c^1(\Rnum_+^d)$ such that $0\leq \chi_n\leq 1$ and $\chi_n(x) = 1$ for any $0\leq |x|\leq n$. Moreover, let $f_n$ be a function on $\Rnum_+^d\times\Rnum_+^d$ defined as
\begin{equation*}
f_n(x,y) = g_n(|x-y|)\chi_n(x)\chi_n(y).
\end{equation*}
It is easy to check that $f_n\in C^1(\Rnum_+^d\times\Rnum_+^d)$. If $|x-y|>1/n$ and $|x|,|y|\leq n$, we have
\begin{equation*}
\nabla_xf_n(x,y) = -\nabla_yf_n(x,y) = \frac{x-y}{|x-y|}.
\end{equation*}
For any $x,y,z\in\Rnum_+^d$, it is easy to see that $f(x+z,y+z)\leq f(x,y)$. These facts, together with the mean value theorem, show that
\begin{align*}
\tilde{\mathcal{A}}f_n(x,y)
\leq&\; \frac{1}{|x-y|}\langle F(x)-F(y),x-y\rangle \\
&\; +(c(x)-c(y))^+\int_{\Rnum_+^d}|g_n(|x+z-y|)-g_n(|x-y|)|\mu(dz) \\
&\; +(c(x)-c(y))^-\int_{\Rnum_+^d}|g_n(|x-y-z|)-g_n(|x-y|)|\mu(dz) \\
\leq&\; \frac{1}{|x-y|}\langle F(x)-F(y),x-y\rangle+|c(x)-c(y)|\int_{\Rnum_+^d}|z|\mu(dz).
\end{align*}
Since $\langle F(x)-F(y),x-y\rangle\leq -r|x-y|^2$, for any $|x-y|>1/n$ and $|x|,|y|\leq n$,
\begin{equation}\label{eqafn}
\tilde{\mathcal{A}}f_n(x,y) \leq -r|x-y|+L_c\int_{\Rnum_+^d}|z|\mu(dz)|x-y|
= \tilde{r}|x-y| = \tilde{r}f_n(x,y).
\end{equation}
Let $(X,Y)$ be a solution to the martingale problem for $\mathcal{A}$. Since $f_n(X_t,Y_t)-\int_0^t \tilde{\mathcal{A}}f_n(X_s,Y_s)ds$ is a martingale, it follows from \cite[Lemma 4.3.2]{ethier2009markov} that
\begin{equation*}
e^{\tilde{r}t}f_n(X_t,Y_t)
-\int_0^t e^{\tilde{r}s}[\tilde{r}f_n(X_s,Y_s)+\tilde{\mathcal{A}}f_n(X_s,Y_s)]ds
\end{equation*}
is also a martingale. Let $T_n$ be a stopping time defined by
\begin{equation*}
T_n = \inf\{t>0: |X_t-Y_t|<1/n\;\mbox{or}\;|X_t|>n\;\mbox{or}\;|Y_t|>n\}.
\end{equation*}
For any $x,y\in\Rnum_+^d$ and $x\neq y$, it is obvious that $|x-y|> 1/n$ and $|x|,|y|\leq n$ when $n$ is sufficiently large. For any $m\geq n$, it follows from \eqref{eqafn} that
\begin{equation}\label{keyeq}
\begin{split}
&\; \Enum_{(x,y)}e^{\tilde{r}(t\wedge T_n)}f_m(X_{t\wedge T_n},Y_{t\wedge T_n})\\
=&\; f_m(x,y)+\Enum_{(x,y)}
\int_0^{t\wedge T_n}e^{\tilde{r}s}[\tilde{r}f_m(X_s,Y_s)+\tilde{\mathcal{A}}f_m(X_s,Y_s)]ds \leq f_m(x,y).
\end{split}
\end{equation}
Let $T = \inf\{t>0: X_t=Y_t\}$. Since $(X,Y)$ is nonexplosive, $T_n\rightarrow T$ as $n\rightarrow \infty$. For any $x,y\in\Rnum_+^d$, it is easy to see that $f_m(x,y)\rightarrow|x-y|$ as $m\rightarrow\infty$. Letting $m\rightarrow\infty$ in \eqref{keyeq} and applying Fatou's lemma, we obtain that
\begin{equation*}
\Enum_{(x,y)}e^{\tilde{r}(t\wedge T_n)}|X_{t\wedge T_n}-Y_{t\wedge T_n}| \leq |x-y|.
\end{equation*}
Further letting $n\rightarrow\infty$ and applying Fatou's lemma give rise to
\begin{equation*}
\Enum_{(x,y)}e^{\tilde{r}(t\wedge T)}|X_{t\wedge T}-Y_{t\wedge T}| \leq |x-y|.
\end{equation*}
Let $Y'_t = Y_tI_{\set{t<T}}+X_tI_{\set{t\geq T}}$. Then
\begin{equation*}
\Enum_{(x,y)}e^{\tilde{r}t}|X_t-Y'_t|
= \Enum_{(x,y)}e^{\tilde{r}(t\wedge T)}|X_{t\wedge T}-Y_{t\wedge T}| \leq |x-y|.
\end{equation*}
Since $(X,Y')$ and $(X,Y)$ have the same marginal distributions, we finally obtain that
\begin{align*}
W(\delta_xP_t,\delta_yP_t) &\leq \Enum_{(x,y)}|X_t-Y'_t| \leq e^{-\tilde{r}t}|x-y|,
\end{align*}
which gives the desired result.
\end{proof}

\begin{lemma}\label{lemExpfinite}
Under the conditions in Theorem \ref{thmergodicity}, we have $\Enum_x|X_t|<\infty$ for any $x\in\Rnum_+^d$ and $t\geq 0$.
\end{lemma}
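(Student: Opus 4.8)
The plan is to run a Lyapunov/drift argument built around the test function $V(x)=|x|$, whose image under the generator satisfies a one-sided bound $\mathcal{A}V(x)\le C$ with a constant $C$ that does not grow with $x$. Since $V\notin\mathcal{D}(\mathcal{A})=C_c^1(\Rnum_+^d)$ (it is neither smooth at the origin nor compactly supported), I would first replace it by a sequence $f_m\in C_c^1(\Rnum_+^d)$ agreeing with $|x|$ up to a small shift on the ball $\{|x|<m\}$, and then pass to the limit via optional stopping and monotone convergence.

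Concretely, I would take $f_m(x)=\theta_m(|x|)$, where $\theta_m$ smooths $t\mapsto t$ near the origin so that $f_m$ is $C^1$ there (for instance $\theta_m(t)=t^2/(2\epsilon_m)$ for $t\le\epsilon_m$ and $\theta_m(t)=t-\epsilon_m/2$ for $\epsilon_m\le t\le m$) and is then tapered smoothly to zero on $\{m\le|x|\le 2m\}$, with $\epsilon_m\downarrow 0$. This can be arranged so that $0\le f_m(x)\le|x|$, $|\nabla f_m|\le 1$, and $f_m(x)\ge|x|-\epsilon_m$ on $\{|x|<m\}$. Because $\mu$ is supported on $\Rnum_+^d$ and $x\in\Rnum_+^d$, one has $|x+y|\ge|x|$, so both $f_m$ and $\mathcal{A}f_m$ vanish for $|x|>2m$; in particular $f_m$ and $\mathcal{A}f_m$ are bounded and $f_m\in\mathcal{D}(\mathcal{A})$.

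The heart of the argument is the uniform bound $\mathcal{A}f_m(x)\le C$ for all $|x|<m$, with $C$ independent of $m$. On the linear part $\epsilon_m\le|x|<m$, where $\nabla f_m(x)=x/|x|$, I would invoke the dissipative condition \eqref{eqass} with $y=0$ to get $\langle F(x),x\rangle\le|F(0)||x|-r|x|^2$, hence the drift bound $\langle F(x),\nabla f_m(x)\rangle\le|F(0)|-r|x|$; for the jump part I would use $|x+y|-|x|\le|y|$ together with $f_m(x)\ge|x|-\epsilon_m$ and $c(x)\le c(0)+L_c|x|$ to control $c(x)\int[f_m(x+y)-f_m(x)]\mu(dy)$ by $(c(0)+L_c|x|)(\int_{\Rnum_+^d}|y|\mu(dy)+\epsilon_m)$. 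Adding these, the terms linear in $|x|$ collect into $(-r+L_c\int_{\Rnum_+^d}|y|\mu(dy))|x|=-\tilde r|x|\le 0$, up to an $O(\epsilon_m|x|)$ error that is negligible once $\epsilon_m m\to 0$ and $L_c\epsilon_m<\tilde r$, so only a constant survives; the smoothing region $|x|<\epsilon_m$ is handled trivially since everything is bounded there. This is the step I expect to be the main obstacle: it is exactly where the hypothesis $r>L_c\int_{\Rnum_+^d}|y|\mu(dy)$, i.e. $\tilde r>0$, is indispensable, since it is what cancels the linear growth of the burst intensity $c$ against the dissipation of the drift $F$.

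With the uniform bound in hand the conclusion is routine. Since $X$ solves the martingale problem for $\mathcal{A}$ (Theorem \ref{thmwellposed}), the process $f_m(X_t)-f_m(X_0)-\int_0^t\mathcal{A}f_m(X_s)ds$ is a martingale; stopping it at $\tau_m=\inf\{t:|X_t|\ge m\}$ and using $\mathcal{A}f_m(X_s)\le C$ for $s<\tau_m$ gives
\[
\Enum_x f_m(X_{t\wedge\tau_m})\le f_m(x)+Ct\le|x|+Ct.
\]
Restricting to $\{\tau_m>t\}$, where $f_m(X_t)\ge|X_t|-\epsilon_m$, and using $f_m\ge 0$, I obtain $\Enum_x[|X_t|I_{\{\tau_m>t\}}]\le|x|+Ct+\epsilon_m$. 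Finally, since $X$ is nonexplosive by Theorem \ref{thmwellposed}, $\tau_m\uparrow\infty$ almost surely, so $I_{\{\tau_m>t\}}\uparrow 1$; letting $m\to\infty$ and applying the monotone convergence theorem yields $\Enum_x|X_t|\le|x|+Ct<\infty$, which is the claim.
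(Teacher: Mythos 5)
Your proof is correct, and it shares the paper's core Lyapunov idea: approximate $|x|$ by functions in $C_c^1(\Rnum_+^d)$, prove a one-sided generator bound in which the hypothesis $r>L_c\int_{\Rnum_+^d}|y|\mu(dy)$ (i.e.\ $\tilde r>0$) cancels the linear growth of the jump intensity $c$ against the dissipation of $F$, and pass to the limit. Where you genuinely diverge is in how the truncation region is handled. The paper obtains a \emph{global} bound $\mathcal{A}f_n(x)\leq C$ on all of $\Rnum_+^d$ by engineering the cutoff: beyond radius $n$ its $g_n$ is decreasing with $|g_n'(t)|\leq 1/t$, so in the taper region the jump term is non-positive and the drift term is bounded by $|F(x)|/|x|\leq |F(0)|+L_F$; it can then apply the martingale identity at the deterministic time $t$ and finish with Fatou's lemma, with no stopping times. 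Your cruder taper over $\{m\leq|x|\leq 2m\}$ would \emph{not} give such a global bound — there $\nabla f_m$ has magnitude of order one and points inward, so e.g.\ for $F(x)=-rx$ the drift term is of order $rm$ — but you correctly compensate by stopping at the exit time $\tau_m$ of the ball of radius $m$, so that only the bound on $\{|x|<m\}$ is ever used, and you then invoke nonexplosiveness (Theorem \ref{thmwellposed}) and monotone convergence. Your route is more robust (no delicate cutoff construction) at the price of the stopping-time machinery; the paper's route avoids localization at the price of the careful taper. Both are complete. One minor remark: in your key estimate the condition $L_c\epsilon_m<\tilde r$ already makes the term linear in $|x|$ non-positive, so the additional requirement $\epsilon_m m\to 0$ is superfluous.
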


\begin{proof}
For any $n\geq 1$, we construct a function $g_n:\Rnum_+\rightarrow\Rnum_+$ satisfying
\begin{equation*}
g_n(t)=\begin{cases}
\frac{n}{2}t^2+\frac{1}{2n}, &0\leq t\leq\frac{1}{n},\\
t,&\frac{1}{n}<t\leq n,\\
n+\int_n^t(n+1-u)du,& n<t\leq a_n.
\end{cases}
\end{equation*}
where
\begin{equation*}
a_n = \frac{n+1+\sqrt{n^2+2n+5}}{2} > n+1.
\end{equation*}
It is easy to check that $g\in C^1[0,a_n]$ and $g'(a_n) = -1/a_n$. Moreover, the function $g_n$ can be constructed so that $g_n$ is decreasing over $[a_n,\infty)$, $g_n\in C_c^1(\Rnum_+)$, and
\begin{equation*}
|g_n(t)|\leq \frac{1}{t},\;\;\;\textrm{for any}\;t\geq a_n.
\end{equation*}
Let $f_n$ be a function on $\Rnum_+^d$ defined by $f_n(x) = g_n(|x|)$. Clearly, $f_n\in C_c^1(\Rnum_+^d)$ and $f_n(x)\rightarrow |x|$ for each $x\in\Rnum_+^d$. Next, we shall prove that there exists $C>0$ such that
\begin{equation}\label{boundedness}
\mathcal{A}f_n(x)\leq C,\;\;\;\textrm{for any}\;n\geq 1,x\in\Rnum_+^d.
\end{equation}
It is easy to see that $|\nabla f(x)| = |g_n'(|x|)|\leq 1$ for any $x\in\Rnum_+^d$. For any $|x|\leq1/n$, it follows from the mean value theorem that
\begin{equation*}
\mathcal{A}f_n(x) \leq |F(x)|+c(x)\int_{\Rnum_+^d}|y|\mu(dy)
\leq |F(0)|+L_F+[c(0)+L_c]\int_{\Rnum_+^d}|y|\mu(dy).
\end{equation*}
For any $1/n<|x|\leq n+1$, it follows from the dissipative condition that
\begin{equation*}
\begin{split}
\mathcal{A}f_n(x) &\leq \frac{g_n'(|x|)}{|x|}\langle F(x)-F(0),x\rangle+\frac{g_n'(|x|)}{|x|}\langle F(0),x\rangle+c(x)g_n'(|x|)\int_{\Rnum_+^d}|y|\mu(dy)\\
&\leq -rg_n'(|x|)|x|+|F(0)|+[c(0)+L_c|x|]g_n'(|x|)\int_{\Rnum_+^d}|y|\mu(dy)\\
&\leq |F(0)|+c(0)\int_{\Rnum_+^d}|y|\mu(dy).
\end{split}
\end{equation*}
In addition, it is easy to see that $g_n$ is decreasing over $[n+1,\infty)$ and $|g_n'(t)|\leq 1/t$ for any $t\geq n+1$. Thus, for any $|x|>n+1$,
\begin{equation}\label{equnibound4}
\mathcal{A}f_n(x) \leq |F(x)||\nabla f(x)| \leq \frac{|F(x)|}{|x|} \leq |F(0)|+L_F.
\end{equation}
The above three estimations imply \eqref{boundedness}. It thus follows from Fatou's lemma that
\begin{equation*}
\Enum_x|X_t| \leq \liminf_{n\rightarrow\infty}\Enum_xf_n(X_t)
= x+ \liminf_{n\rightarrow\infty}\int_0^t\Enum_x\mathcal{A}f_n(X_s)ds \leq x+Ct < \infty,
\end{equation*}
which gives the desired result.
\end{proof}

We are now in a position to prove Theorem \ref{thmergodicity}.

\begin{proof}[Proof of Theorem \ref{thmergodicity}]
Since we have proved Lemmas \ref{lemexpon} and \ref{lemExpfinite}, the rest of the proof follows the same line as \cite[Corollary 3]{eberle2016reflection}.
\end{proof}

\section{Proof of Theorem \ref{convergence}}\label{convergenceproof}
In this section, we shall prove the convergence of $X_V$ to $X$ as $V\rightarrow\infty$. For simplicity of notation, we only consider the case of $N = 1$, where the operator $\mathcal{A}$ has the form of \eqref{eqoperator}. The proof of the general case is totally the same.

To proceed, we recall the following two definitions \cite[Sections 3.7 and 1.5]{ethier2009markov}.

\begin{definition}
Let $S$ be a complete separable metric space and let $\{Y_V\}$ be a family of processes with sample paths in $D(\Rnum_+,S)$. If for every $\eta>0$ and $T>0$, there exists a compact set $\Gamma_{\eta,T}\subset S$ such that
\begin{equation*}
\inf_{V>0}P(Y_V(t)\in \Gamma_{\eta,T}\;\mbox{for any}\;0\leq t\leq T)\geq 1-\eta,
\end{equation*}
then we say that $\{Y_V\}$ satisfies the \emph{compact containment condition}.
\end{definition}

\begin{definition}
Let $\{T(t)\}$ be a measurable contraction semigroup on $B(S)$. Then the \emph{full generator} of $\{T(t)\}$ is defined as the set
\begin{equation*}
\mathcal{\widehat R} = \{(f,g)\in B(S)\times B(S): T(t)f-f=\int_0^tT(s)gds,\;\mbox{for any}\;t\geq 0\}.
\end{equation*}
\end{definition}

To prove weak convergence in the Skorohod space, we need the following lemma, which can be found in \cite[Corollaries 4.8.12 and 4.8.16]{ethier2009markov}.

\begin{lemma}\label{lemconverge}
Let $S$ be a complete separable metric space and let $\mathcal{\mathcal{R}}$ be an operator on $C_b(S)$. Suppose that for some $\nu\in\mathcal{P}(S)$, there exists a unique solution $Y$ to the martingale problem for $(\mathcal{R},\nu)$. For any $V>0$, let $Y_V$ be a c\`{a}dl\`{a}g Markov process with values in a set $S_V\subset S$ corresponding to a measurable contraction semigroup $\{T_V(t)\}$ with full generator $\mathcal{\widehat R}_V$. Suppose that $\{Y_V\}$ satisfies the compact containment condition and suppose that for each $f\in \mathcal{D}(\mathcal{A})$, there exists $(f_V,g_V)\in\mathcal{\widehat R}_V$ such that
\begin{equation*}
\sup_{V>0}\|f_V\|<\infty
\end{equation*}
and
\begin{equation}
\lim_{V\rightarrow\infty} \sup_{x\in S_V}|f_V(x)-f(x)|
= \lim_{V\rightarrow\infty}\sup_{x\in S_V}|g_V(x)-\mathcal{A}f(x)| = 0.
\end{equation}
Then $\nu_V\Rightarrow\nu$ as $V\rightarrow\infty$ implies $Y_V\Rightarrow Y$ in $D(\Rnum_+,S)$ as $V\rightarrow\infty$, where $\nu_V$ is the initial distribution of $Y_V$.
\end{lemma}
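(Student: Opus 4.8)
The plan is to establish the lemma in two stages — relative compactness of $\set{Y_V}$ in $D(\Rnum_+,S)$, followed by identification of every subsequential limit as a solution of the martingale problem for $(\mathcal{R},\nu)$ — and then invoke well-posedness to force all limit points to coincide with the law of $Y$. The common engine for both stages is the martingale characterization of the full generator: since $(f_V,g_V)\in\widehat{\mathcal{R}}_V$ and $Y_V$ is the Markov process attached to $\set{T_V(t)}$, the process
\begin{equation*}
M_V^f(t) = f_V(Y_V(t))-f_V(Y_V(0))-\int_0^t g_V(Y_V(s))\,ds
\end{equation*}
is a martingale for the natural filtration of $Y_V$. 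The hypothesis $\sup_V\norm{f_V}<\infty$ together with the uniform convergence $g_V\to\mathcal{R}f$ on $S_V$ yields a uniform bound $\sup_V\norm{g_V}<\infty$, so $f_V(Y_V)$ is a uniformly bounded martingale plus a finite-variation term $\int_0^\cdot g_V(Y_V)\,ds$ that is uniformly Lipschitz in time.

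First I would prove relative compactness. Fix $f$ in a subclass $\mathcal{D}_0\subset\mathcal{D}(\mathcal{R})$ that separates points and generates an algebra (available here since $\mathcal{D}(\mathcal{R})$ is rich, e.g.\ $C_c^1$). The decomposition above writes $f_V(Y_V)$ as a bounded martingale plus a uniformly Lipschitz part; controlling the conditional second moments of the increments of $M_V^f$ by the uniform bounds on $f_V$ and $g_V$ verifies the hypotheses of the Aldous--Rebolledo tightness criterion \cite[Theorem 3.9.4]{ethier2009markov}, so $\set{f_V(Y_V)}$ is relatively compact in $D(\Rnum_+,\Rnum)$, and $\norm{f_V-f}\to 0$ on $S_V$ lets me replace $f_V$ by $f$ with vanishing error. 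Combining this with the assumed compact containment condition, \cite[Theorem 3.9.1]{ethier2009markov} upgrades relative compactness of the real-valued images to relative compactness of $\set{Y_V}$ itself in $D(\Rnum_+,S)$.

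Next I would identify the limit. Pass to a subsequence $Y_{V_k}\Rightarrow Y^*$; since $\nu_V\Rightarrow\nu$, the law of $Y^*(0)$ is $\nu$. To show that $f(Y^*(t))-f(Y^*(0))-\int_0^t\mathcal{R}f(Y^*(s))\,ds$ is a martingale, I would fix times $0\le t_1<\cdots<t_p\le s<t$ and bounded continuous functionals $h_1,\dots,h_p$, and pass to the limit in the identity $\Enum[(M_{V_k}^f(t)-M_{V_k}^f(s))\prod_j h_j(Y_{V_k}(t_j))]=0$. Choosing the test times outside the at most countable set of fixed discontinuities of $Y^*$ makes the finite-dimensional evaluations converge under the Skorohod topology, while $\norm{f_V-f}\to 0$ and $\norm{g_V-\mathcal{R}f}\to 0$ on $S_{V_k}$ allow me to replace $f_{V_k}$ by $f$ and $g_{V_k}$ by $\mathcal{R}f$; bounded convergence then handles the compensator integral. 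Hence $Y^*$ solves the martingale problem for $(\mathcal{R},\nu)$, and by its well-posedness $Y^*\stackrel{d}{=}Y$. Because $\set{Y_V}$ is relatively compact and every limit point carries the law of $Y$, the full sequence converges, giving $Y_V\Rightarrow Y$.

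The main obstacle is the limit passage in the martingale identity rather than the soft tightness step. The Skorohod topology renders the evaluation map $\omega\mapsto\omega(t)$ discontinuous at fixed jump times, so convergence of finite-dimensional distributions is only available off a countable exceptional set, and one must confirm that restricting the test times to this set still characterizes the martingale. Equally delicate is the convergence $\int_0^t g_{V_k}(Y_{V_k}(s))\,ds\to\int_0^t\mathcal{R}f(Y^*(s))\,ds$: since $g_{V_k}$ converges to $\mathcal{R}f$ only uniformly on the shrinking domains $S_{V_k}$, I must combine this state-space convergence with the path convergence of $Y_{V_k}$ and the uniform bound on $g_V$, integrating against Lebesgue measure in time while the exceptional time set has measure zero.
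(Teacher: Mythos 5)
Your proposal is correct in outline, but it is worth noting that the paper does not prove this lemma at all: it is quoted directly from Ethier and Kurtz \cite[Corollaries 4.8.12 and 4.8.16]{ethier2009markov}, and what you have written is essentially a reconstruction of the machinery behind those corollaries. Your two stages match the standard argument exactly: relative compactness of $\{f_V(Y_V)\}$ from the decomposition of $f_V(Y_V)$ into a bounded martingale plus a compensator that is Lipschitz in time (this is \cite[Theorem 3.9.4]{ethier2009markov}, which is kindred to but not literally the Aldous--Rebolledo criterion), upgraded to relative compactness of $\{Y_V\}$ via compact containment by \cite[Theorem 3.9.1]{ethier2009markov}; then identification of subsequential limits by passing to the limit in $\Enum\big[(M_{V_k}^f(t)-M_{V_k}^f(s))\prod_j h_j(Y_{V_k}(t_j))\big]=0$ with test times chosen off the countable set of fixed discontinuities, and uniqueness of the limit from well-posedness (the mechanism of \cite[Theorem 4.8.10]{ethier2009markov}). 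Two points deserve flagging. First, your tightness step quietly assumes that $\mathcal{D}(\mathcal{R})$ contains an algebra that separates points and is appropriately dense --- you say this is ``available here since $\mathcal{D}(\mathcal{R})$ is rich, e.g.\ $C_c^1$'' --- but that hypothesis is absent from the lemma as stated in the paper (it is present in the Ethier--Kurtz corollaries); it does hold in the paper's application, where the domain is $C_c^1(\Rnum_+^d)$ and the state space is compactified, so your silent addition is the right repair rather than a gap. Second, $\sup_V\|g_V\|<\infty$ follows from $\sup_{x\in S_V}|g_V(x)-\mathcal{R}f(x)|\to 0$ and boundedness of $\mathcal{R}f$ only for $V$ large, not for all $V>0$; since the conclusion concerns $V\to\infty$ this is harmless, but it should be said. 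Your closing worry about extending the martingale property from co-countably many times $s<t$ to all times is resolved in the standard way: the compensator is continuous in $t$ and the process is c\`{a}dl\`{a}g, so right-continuity of $t\mapsto\Enum[\,\cdot\,]$ propagates the identity from a dense set of times. What your direct argument buys, compared with the paper's bare citation, is an explicit accounting of which hypotheses do which work --- in particular it exposes the separating-algebra assumption that the paper's statement of the lemma omits.
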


The following lemma plays a crucial role in studying the limit behavior of $X_V$.

\begin{lemma}\label{lemconverge2}
Suppose that the conditions in Theorem \ref{convergence} hold. Then for any $f\in\mathcal{D}(\mathcal{A})$,
\begin{equation}\label{eqAfconverge1}
\lim_{V\rightarrow\infty}\sup_{x\in E_V}|\mathcal{A}_Vf(x)-\mathcal{A}f(x)| = 0.
\end{equation}
\end{lemma}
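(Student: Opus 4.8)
The plan is to split $\mathcal{A}_V-\mathcal{A}$ into its reaction and bursting parts and prove that each converges uniformly to the corresponding part of $\mathcal{A}$ (we work in the case $N=1$, so $\mathcal{A}$ has the form \eqref{eqoperator} and $\mathcal{A}_V$ the form \eqref{eqdisoperator}). The key preliminary reduction I would make uses that $f\in C_c^1(\Rnum_+^d)$ has compact support: both operators vanish once $x$ leaves a fixed bounded region. Writing $\mathrm{supp}\,f\subset[0,R)^d$ and $M_0=\max\{|m|:\beta_m\neq0\}$ (finite by hypothesis), I note that the bursting jumps satisfy $m\geq0$, so if some coordinate of $x$ exceeds $R$ then $f(x)=0$ and $f(x+m/V)=0$ for all $m$; while for the reaction part the finitely many admissible $m$ have $|m|\leq M_0$, so (for $V\geq1$) the difference can be nonzero only for $x\in[0,R+M_0]^d=:K$. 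Hence $\sup_{x\in E_V}$ reduces to $\sup_{x\in E_V\cap K}$, and on the compact set $K$ the coefficients $\beta_m$, $F$ and $c$ are all bounded; in particular the unboundedness of $c$ causes no difficulty.

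For the reaction part I would Taylor expand: since $V[f(x+m/V)-f(x)]=\int_0^1\langle m,\nabla f(x+tm/V)\rangle\,dt$, uniform continuity of $\nabla f$ gives convergence to $\langle m,\nabla f(x)\rangle$ uniformly in $x$, with error bounded by $|m|$ times the modulus of continuity of $\nabla f$ at $|m|/V$. Multiplying by $\beta_m(x)$ (bounded on $K$) and summing over the \emph{finitely many} $m$ with $\beta_m\neq0$ yields uniform convergence to $\sum_m\beta_m(x)\langle m,\nabla f(x)\rangle=\langle F(x),\nabla f(x)\rangle$. The finiteness assumption is precisely what removes any tail, so this part is routine.

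The bursting part is the real work. After factoring out the bounded $c(x)$, I must show $\sup_{x\in K}\big|\sum_{m}p(V,m)g_x(m/V)-\int_{\Rnum_+^d}g_x\,d\mu\big|\to0$, where $g_x(y)=f(x+y)-f(x)$ satisfies $|g_x|\leq2\|f\|_\infty$ and $\|\nabla g_x\|_\infty=\|\nabla f\|_\infty$ uniformly in $x$. Setting $\mu_V(m):=\mu[m/V,(m+1)/V)$, I would write the difference as $I_V+II_V$. The term $II_V=\sum_m\int_{[m/V,(m+1)/V)}[g_x(m/V)-g_x(y)]\mu(dy)$ is controlled by the Lipschitz bound, giving $|II_V|\leq\|\nabla f\|_\infty\sqrt{d}/V\to0$ uniformly (using $\sum_m\mu_V(m)=1$). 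The term $I_V=\sum_m[p(V,m)-\mu_V(m)]g_x(m/V)$ I would split at $|m|\leq kV$ versus $|m|>kV$: on $|m|\leq kV$, condition (c) in \eqref{conditions} gives $\sup_{|m|\leq kV}|p(V,m)-\mu_V(m)|\leq\delta_V/V^d$ with $\delta_V\to0$, while the number of such lattice points is $O((kV)^d)$, so the product is $O(k^d\delta_V)\to0$ for each fixed $k$; for the tail I would bound $\sum_{|m|>kV}\mu_V(m)\leq\mu(\{|y|>k\})$ (every box with $|m|>kV$ lies in $\{|y|>k\}$) and control $\sum_{|m|>kV}p(V,m)$ by combining condition (b), the bound just established on $|m|\leq kV$, and the same $\mu$-tail. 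An $\varepsilon/3$ argument, first choosing $k$ so that $\mu(\{|y|>k\})$ is small (possible since $\mu$ is a finite measure) and then letting $V\to\infty$, closes the estimate.

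The hard part will be this tail matching in the bursting term: reconciling the discrete burst law $p(V,\cdot)$ with the continuous measure $\mu$ requires all three conditions in \eqref{conditions} at once — condition (c) with its crucial $V^d$ normalization (which exactly offsets the $O(V^d)$ count of lattice points in $\{|m|\leq kV\}$) for the bulk, and conditions (a)--(b) together with the decay of $\mu$ for the tail — and care is needed because the number of relevant $m$ grows with $V$. Condition (a) additionally guarantees the bursting sum is absolutely convergent, so that $\mathcal{A}_Vf$ is well defined to begin with. Combining the three estimates yields \eqref{eqAfconverge1}.
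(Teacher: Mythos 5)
Your proposal is correct and follows essentially the same route as the paper: the same compact-support reduction (using finitely many reaction directions and nonnegative burst jumps), the same mean-value/Taylor treatment of the drift term, and for the bursting term the same three-way estimate combining condition (c) (whose $V^d$ factor offsets the $O((kV)^d)$ lattice-point count in the bulk), a Riemann-sum approximation via the regularity of $f$, and a tail bound from condition (b) together with the $\mu$-tail in an $\varepsilon$-argument. The only (immaterial) difference is that the paper additionally carries a term bounding $|\frac{1}{V}\hat{q}_V - \beta_m|$ so as to cover the relaxed reaction condition \eqref{reactiongeneral}, whereas you work with the exact form $V\beta_m$, which is all the stated lemma (with $\mathcal{A}_V$ as defined) requires.
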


\begin{proof}
Since $\beta_m$ is nonzero for a finite number of $m$, there exists $K>0$ such that $\beta_m\equiv 0$ for any $|m|\geq K$. Since $f$ has a compact support, there exists $\gamma>0$ such that $f(x)$ vanishes whenever $|x|\geq\gamma$. The above two facts suggest that $\mathcal{A}f(x) = 0$ for any $|x|\geq\gamma$ and $\mathcal{A}_Vf(n/V)=0$ for any $|n|\geq\gamma V+K$. Therefore, \eqref{eqAfconverge1} holds if and only if
\begin{equation}\label{equivalence}
\lim_{V\rightarrow\infty}\sup_{|n|\leq\gamma V+K}
\left|\mathcal{A}_Vf\left(\tfrac{n}{V}\right)-\mathcal{A}f\left(\tfrac{n}{V}\right)\right| = 0.
\end{equation}
It is easy to check that
\begin{equation*}
\left|\mathcal{A}_Vf\left(\tfrac{n}{V}\right)-\mathcal{A}f\left(\tfrac{n}{V}\right)\right|
\leq \textrm{I}+\textrm{II}+\textrm{III},
\end{equation*}
where
\begin{gather*}
\textrm{I} \leq \Big|\sum_{m\neq 0}
\left[\hat{q}_V\left(\tfrac{n}{V},\tfrac{n+m}{V}\right)-V\beta_m\left(\tfrac{n}{V}\right)\right]
\left[f\left(\tfrac{n+m}{V}\right)-f\left(\tfrac{n}{V}\right)\right]\Big|,\\
\textrm{II} = \Big|\sum_{m\neq 0}V\beta_m\left(\tfrac{n}{V}\right)
\left[f\left(\tfrac{n+m}{V}\right)-f\left(\tfrac{n}{V}\right)\right]
-\sum_{i=1}^dF_i\left(\tfrac{n}{V}\right)\partial_if\left(\tfrac{n}{V}\right)\Big|,\\
\textrm{III} = c\left(\tfrac{n}{V}\right)\Big|\sum_{m\in\Nnum^d}p(V,m)\left[f\left(\tfrac{n+m}{V}\right)
-f\left(\tfrac{n}{V}\right)\right]-\int_{\Rnum_+^d}\left[f\left(\tfrac{n}{V}+y\right)
-f\left(\tfrac{n}{V}\right)\right]\mu(dy)\Big|.
\end{gather*}
By the mean value theorem, we have
\begin{equation*}
\textrm{I} \leq \|\nabla f\|\sum_{m\neq 0}|m|
\left|\tfrac{1}{V}\hat{q}_V\left(\tfrac{n}{V},\tfrac{n+m}{V}\right)-\beta_m\left(\tfrac{n}{V}\right)\right|.
\end{equation*}
It thus follows from the condition \eqref{reactiongeneral} that
\begin{equation}\label{sum1}
\lim_{V\rightarrow\infty}\sup_{|n|\leq \gamma V+K}\textrm{I} = 0.
\end{equation}
By the mean value theorem, for any $0<|m|\leq K$, there exists $\theta_m\in(0,1)$ such that
\begin{equation*}
\textrm{II} \leq \sum_{m\neq 0}\sum_{i=1}^d|m_i|\beta_m\left(\tfrac{n}{V}\right)
\left|\partial_if\left(\tfrac{n+\theta_mm}{V}\right)-\partial_if\left(\tfrac{n}{V}\right)\right|
\end{equation*}
Since $\beta_m$ is locally bounded and $\partial_if$ is uniformly continuous, we have
\begin{equation}\label{sum2}
\lim_{V\rightarrow\infty}\sup_{|n|\leq \gamma V+K}\textrm{II} = 0.
\end{equation}
For any $\epsilon>0$, there exists $k>0$ such that $\mu\left((0,k]^d\right)>1-\epsilon$. For convenience, let $R_k = (0,[2kV])^d$ be a hypercube. When $V$ is sufficiently large, direct computations show that
\begin{equation*}
\begin{split}
\textrm{III} \leq&\; c\left(\tfrac{n}{V}\right)\bigg|\sum_{m\in R_k}p(V,m)
\left[f\left(\tfrac{n+m}{V}\right)-f\left(\tfrac{n}{V}\right)\right]
-\int_{R_k/V}\left[f\left(\tfrac{n}{V}+y\right)
-f\left(\tfrac{n}{V}\right)\right]\mu(dy)\bigg|\\
&\; +2\|f\|c\left(\tfrac{n}{V}\right)\sum_{m\notin R_k}p(V,m)
+2\|f\|c\left(\tfrac{n}{V}\right)\mu\left(\Rnum_+^d-(0,k]^d\right).
\end{split}
\end{equation*}
By the assumptions in \eqref{conditions}, we have
\begin{equation*}
\begin{split}
&\; \lim_{V\rightarrow\infty}\bigg|\sum_{m\notin R_k}p(V,m)-\mu(\Rnum_+^d-R_k/V)\bigg|\\
=&\; \lim_{V\rightarrow\infty}\bigg|\sum_{m\in R_k}p(V,m)-\mu(R_k/V)\bigg|
\leq \lim_{V\rightarrow\infty}\sum_{m\in R_k}\left|p(V,m)
-\mu\left[\tfrac{m}{V},\tfrac{m+1}{V}\right)\right|\\
\leq&\; (2k)^d\lim_{V\rightarrow\infty}V^d\sup_{0<|m|\leq 2kdV}\left|p(V,m)
-\mu\left[\tfrac{m}{V},\tfrac{m+1}{V}\right)\right| = 0.
\end{split}
\end{equation*}
When $V$ is sufficiently large, we have $\mu(\Rnum^d-R_k/V)\leq \mu(\Rnum^d-(0,k]^d)\leq\epsilon$ and thus
\begin{equation}\label{temp1}
\sum_{m\notin R_k}p(V,m) < 2\epsilon.
\end{equation}
Moreover, direct computations show that
\begin{equation*}
\begin{split}
&\; \bigg|\sum_{m\in R_k}p(V,m)\left[f\left(\tfrac{n+m}{V}\right)-f\left(\tfrac{n}{V}\right)\right]
-\int_{R_k/V}\left[f\left(\tfrac{n}{V}+y\right)-f\left(\tfrac{n}{V}\right)\right]\mu(dy)\bigg|\\
\leq&\; \bigg|\sum_{m\in R_k}\left[p(V,m)-\mu\left[\tfrac{m}{V},\tfrac{m+1}{V}\right)\right]
\left[f\left(\tfrac{n+m}{V}\right)-f\left(\tfrac{n}{V}\right)\right]\bigg|\\
&\; +\bigg|\sum_{m\in R_k}\mu\left[\tfrac{m}{V},\tfrac{m+1}{V}\right)
\left[f\left(\tfrac{n+m}{V}\right)-f\left(\tfrac{n}{V}\right)\right]
-\int_{R_k/V}\left[f\left(\tfrac{n}{V}+y\right)-f\left(\tfrac{n}{V}\right)\right]\mu(dy))\bigg|\\
\leq&\; 2(2k)^d\|f\|V^d\sup_{0<|m|\leq 2kdV}
\left|p(V,m)-\mu\left[\tfrac{m}{V},\tfrac{m+1}{V}\right)\right|\\
&\; +\int_{R_k/V}\left|f\left(\tfrac{n+[yV]}{V}\right)-f\left(\tfrac{n}{V}+y\right)\right|\mu(dy),
\end{split}
\end{equation*}
where $[yV] = ([y_1V],\ldots,[y_dV])$. When $V$ is sufficiently large, it follows from \eqref{conditions} and the uniform continuity of $f$ that
\begin{equation}\label{temp2}
\bigg|\sum_{m\in R_k}p(V,m)\left[f\left(\tfrac{n+m}{V}\right)
-f\left(\tfrac{n}{V}\right)\right] -\int_{R_k/V}\left[f\left(\tfrac{n}{V}+y\right)
-f\left(\tfrac{n}{V}\right)\right]\mu(dy)\bigg|\leq \epsilon.
\end{equation}
Combining \eqref{temp1} and \eqref{temp2} and noting that $c$ is continuous, we obtain that
\begin{equation}\label{sum3}
\lim_{V\rightarrow\infty}\sup_{|n|\leq\gamma V+K}\textrm{III} = 0.
\end{equation}
Finally, \eqref{equivalence} follows from \eqref{sum1}, \eqref{sum2}, and \eqref{sum3}.
\end{proof}

We are now in a position to prove Theorem \ref{convergence}.

\begin{proof}[Proof of Theorem \ref{convergence}]
For any $f\in\mathcal{D}(\mathcal{A}^\Delta)$, it is easy to check that $\mathcal{A}^\Delta f\in C((\Rnum_+^d)^\Delta)$ and thus $\mathcal{A}^\Delta$ is an linear operator on $C((\Rnum_+^d)^\Delta)$. Since $X$ is the unique solution to the martingale problem for $(\mathcal{A},\nu)$, it is easy to see that $X$ is also the unique solution to the martingale problem for $(\mathcal{A}^\Delta,\nu)$. Since $X_V$ is the unique solution to the martingale problem for $(\mathcal{A}_V,\nu_V)$, for any $f\in\mathcal{D}(\mathcal{A}_V)$, we have
\begin{equation*}
\Enum_xf(X_V(t)) = f(x)+\int_0^t\Enum_x\mathcal{A}_Vf(X_V(s))ds.
\end{equation*}
Thus $(f,\mathcal{A}_Vf)$ is in the full generator of $X_V$. If we take $S = (\Rnum_+^d)^\Delta$ and $S_V = E_V$, then $\{X_V\}$ automatically satisfies the compact containment condition since $S$ is compact. For any $f\in\mathcal{D}(\mathcal{A})$, setting $f_V = f|_{E_V}\in\mathcal{D}(\mathcal{A}_V)$ and applying Lemma \ref{lemconverge2}, we obtain that
\begin{equation}
\lim_{V\rightarrow\infty} \sup_{x\in E_V}|f_V(x)-f(x)|
= \lim_{V\rightarrow\infty}\sup_{x\in E_V}|\mathcal{A}_Vf_V(x)-\mathcal{A}f(x)| = 0.
\end{equation}
So far, all the conditions in Lemma \ref{lemconverge} have been checked and thus $X_V\Rightarrow X$ in $D(\Rnum_+,(\Rnum_+^d)^\Delta)$. Since both $X_V$ and $X$ have sample paths in $D(\Rnum_+,\Rnum_+^d)$, the desired result follows from \cite[Corollary 3.3.2]{ethier2009markov}.
\end{proof}

\section*{Acknowledgments}
The authors gratefully acknowledge Thomas G. Kurtz, David F. Anderson, and Hong Qian for helpful discussions. The authors are also greatly indebted to the anonymous referees for their valuable comments and suggestions which have greatly improved the presentation. X. Chen was funded by National Natural Science Foundation of China (Grant No. 11701483).

\setlength{\bibsep}{5pt}
\small\bibliographystyle{unrst}

\end{document}